\documentclass[11pt]{amsart}
\usepackage{amscd,amssymb}
\usepackage{amsthm,amsmath,amssymb}
\usepackage[matrix,arrow]{xy}
\usepackage{enumerate}
\usepackage{tikz}

\sloppy\pagestyle{plain}

\textwidth=18cm \textheight=23.6cm

\addtolength{\topmargin}{-1.5cm}
\addtolength{\oddsidemargin}{-2.5cm}
\addtolength{\evensidemargin}{-2.5cm}

\newtheorem{theorem}[equation]{Theorem}

\newtheorem{lemma}[equation]{Lemma}
\newtheorem{corollary}[equation]{Corollary}
\newtheorem{conjecture}[equation]{Conjecture}
\newtheorem{question}[equation]{Question}

\theoremstyle{definition}

\newtheorem{definition}[equation]{Definition}

\theoremstyle{remark}
\newtheorem{remark}[equation]{Remark}

\makeatletter\@addtoreset{equation}{section} \makeatother

\newcommand{\PP}{\mathbb{P}}
\newcommand{\Q}{\mathbb{Q}}

\DeclareMathOperator{\mult}{mult}

\author[Ahmadinezhad \& Cheltsov \& Schicho]{Hamid Ahmadinezhad, Ivan Cheltsov and Josef Schicho}

\title[On a conjecture of Tian]{On a conjecture of Tian}

\address{Department of Mathematical Sciences, Loughborough University, LE11 3TU, UK}
\email{h.ahmadinezhad@lboro.ac.uk}

\address{University of Edinburgh, Department of Mathematics, Mayfield Rd., Edinburgh EH9 3JZ, UK}
\email{I.Cheltsov@ed.ac.uk}

\address{RISC, Johannes Kepler University, Linz, Schloss Hagenberg, 4232 Hagenberg, Austria}
\email{josef.schicho@risc.jku.at}

\subjclass[2010]{14J25, 14J70 (primary), and 32Q20 (secondary)}

\keywords{Log canonical threshold, $\alpha$-invariant of Tian, smooth surface.}

\pagestyle{headings}

\dedicatory{``A tragedy of mathematics is a beautiful conjecture ruined by an ugly fact.''}

\begin{document}

\begin{abstract}
We study Tian's $\alpha$-invariant in comparison with the $\alpha_1$-invariant for pairs $(S_d,H)$ consisting of
a smooth surface $S_d$ of degree $d$ in the projective three-dimensional space and a hyperplane section $H$.
A conjecture of Tian asserts that $\alpha(S_d,H)=\alpha_1(S_d,H)$.
We show that this is indeed true for $d=4$ (the result is well known for $d\leqslant 3$),
and we show that $\alpha(S_d,H)<\alpha_1(S_d,H)$ for $d\geqslant 8$ provided that $S_d$ is general enough. We also construct examples of $S_d$, for $d=6$ and $d=7$, for which Tian's conjecture fails. We provide a candidate counterexample for~$S_5$.
\end{abstract}

\sloppy

\maketitle

\setcounter{tocdepth}{1}

\section{Introduction}
\label{section:into}

In order to prove the existence of a K\"ahler-Einstein metric, known as the Calabi problem, on a smooth Fano variety, in \cite{Tian97} Gang Tian introduced a quantity, known as the $\alpha$-invariant, that measures how singular pluri-anticanonical divisors on the Fano variety can be. There, he proved that a smooth Fano variety of dimension $m$ admits a K\"ahler-Einstein metric provided that its $\alpha$-invariant is bigger that $\frac{m}{m+1}$.

Despite the fact that the Calabi problem for smooth Fano varieties has been solved  (see \cite{CDS,Eyss,Rubinstein,Tian-CPAM})  this result of Tian is often the only way to prove the existence of the K\"ahler-Einstein metric for a given Fano.

In fact, the $\alpha$-invariant turned out to have important applications in birational geometry as well; see for example \cite{Vanya}.
Later, Tian generalised this invariant for arbitrary polarised pairs $(X,L)$, where $X$ is a smooth variety and $L$ is an ample Cartier divisor on it.
For the pair $(X,L)$, it can be defined as
$$
\alpha\big(X,L\big)=\mathrm{sup}\left\{\lambda\in\mathbb{Q}\ \left|%
\aligned
&\text{the log pair}\ \left(X, \lambda D\right)\ \text{is log canonical}\\
&\text{for every effective $\mathbb{Q}$-divisor}\ D\sim_{\mathbb{Q}} L
\endaligned\right.\right\}\in\mathbb{R}_{>0}.%
$$
This definition coincides with Tian's original definition in \cite{Tian97,Tian2012} by \cite[Theorem~A.3]{ChSh}.

The number $\alpha(X,L)$ is often hard to compute but, in good situations, can be approximated by numbers
that are much easier to control (see, for example, \cite[Proposition~2.2]{ChPaWo14}).
For instance, if the linear system $|nL|$ is not empty,
Tian defined the $n$-th $\alpha$-invariant of the pair $(X,L)$ as
$$
\alpha_n\big(X,L\big)=\mathrm{sup}\Bigg\{\lambda\in\mathbb{Q}\ \Bigg|\ \text{the pair}\ \Bigg(X, \frac{\lambda}{n} D\Bigg)\ \text{is log canonical for every}\ D\in|nL|\Bigg\}\in\mathbb{Q}_{>0}.%
$$
If the linear system $|nL|$ is empty, one can simply put $\alpha_n(X,L)=+\infty$.
Then $\alpha(X,L)\leqslant\alpha_n(X,L)$ and
$$
\alpha\big(X,L\big)=\inf_{n\geqslant 1}\Big\{\alpha_n\big(X,L\big)\Big\}.
$$

Then, Tian posed the following conjecture.

\begin{conjecture}[{\cite[Conjecture~5.4]{Tian2012}}]
\label{conjecture:Tian}
Suppose that $L$ is very ample and defines a projectively normal embedding under its associated morphism,
i.e., the graded algebra
$$
\bigoplus_{i\geqslant 0}H^0\Big(X,\mathcal{O}_X\big(iL\big)\Big)
$$
is generated by elements in $H^0(X,\mathcal{O}_X(L))$. Then $\alpha(X,L)=\alpha_1(X,L)$.
\end{conjecture}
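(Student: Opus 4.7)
Since $\alpha(X,L)\leqslant\alpha_1(X,L)$ holds trivially from the definitions, the plan is to attack the reverse inequality by contradiction. Setting $\lambda=\alpha_1(X,L)$, suppose that there exists an effective $\Q$-divisor $D\sim_{\Q}L$ with $(X,\lambda D)$ not log canonical at some point $p\in X$; the aim is then to produce an honest section $D'\in|L|$ for which $(X,\lambda D')$ is also not log canonical at (or infinitely near) $p$, contradicting the definition of $\alpha_1(X,L)$.

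I would proceed in three steps. First, clear denominators: choose $n\geqslant 1$ with $nD\in|nL|$ and record the local data of $(X,\lambda D)$ at $p$, namely $\mult_p D\geqslant 1/\lambda$ together with some exceptional divisor $E$ over $p$ satisfying $a(E,X,\lambda D)<-1$. Second, exploit the projective normality hypothesis: the surjectivity of the multiplication map $\mathrm{Sym}^n H^0(X,\mathcal{O}_X(L))\to H^0(X,\mathcal{O}_X(nL))$ writes the defining section of $nD$ as a linear combination of products of $n$ hyperplane sections, so morally $D$ should be realised as a $\Q$-weighted average of elements of $|L|$. Third, descend from this averaged expression to a single member $D'\in|L|$ that inherits the non-log-canonical singularity of $\lambda D$ at (or infinitely near) $p$, for instance by moving in a suitable pencil contained in the decomposition, or by specialising the weights so that one factor dominates.

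The hard part will be this last descent. A non-log-canonical singularity of a $\Q$-divisor is a \emph{cumulative} phenomenon: several distinct members of $|L|$ can each be perfectly mild at $p$, yet a $\Q$-combination of them with small rational weights may concentrate enough multiplicity along a common tangent direction, or along a common infinitely near point over $p$, to break log canonicity. I see no obvious mechanism that forces this accumulated singularity to be visible already in a single integral section, so I expect Step 3 to fail in general. The most plausible obstructions should come from surfaces carrying several distinct curves in $|L|$ that share a high-order tangency at one point while remaining individually mild there, and this is precisely where I would begin to search for counterexamples; conversely, any situation in which the geometry of $|L|$ is rigid enough to rule out such tangential clusters (one expects this for low $d$) should make the descent tractable and so force the conjecture to hold in that range.
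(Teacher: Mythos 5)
The statement you were given is not a theorem the paper proves; it is Tian's conjecture, and the central contribution of the paper is to show that this conjecture \emph{fails} in general. Specifically, the paper proves it for smooth $S_d\subset\PP^3$ of degree $d\leqslant 4$ (Theorem~\ref{theorem:quartic}), but constructs counterexamples for general $S_d$ with $d\geqslant 8$ (Theorem~\ref{theorem:general-surface}) and for explicit sextics and septics (Corollary~\ref{corollary:septic-sextic}), and gives strong computational evidence for a quintic counterexample (Remark~\ref{computer}).

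Your submission is therefore not a valid proof, but it is the right reaction to a false statement. You correctly see that $\alpha(X,L)\leqslant\alpha_1(X,L)$ is formal, that the reverse would require descending a non-log-canonical singularity of a $\Q$-divisor $D\sim_{\Q}L$ to a single member of $|L|$, and that this descent is the weak link because log canonicity can fail \emph{cumulatively} for a weighted combination of individually mild integral divisors; you also correctly predict that this is where counterexamples should live. The paper vindicates this diagnosis precisely. In Lemma~\ref{lemma:general-surface} the authors take, for any rational $m<\sqrt{d}$, a divisor $\widetilde{M}\in|f^*(nH)-nmE|$ (Riemann--Roch gives nonemptiness for $n\gg 0$), push it down to $M$, and set $D=\tfrac{1}{n}M\sim_{\Q}H$; then $\mathrm{mult}_P(D)\geqslant m$ forces $\mathrm{lct}_P(S_d,D)\leqslant 2/m$, hence $\alpha(S_d,H)\leqslant 2/\sqrt{d}$, which for $d\geqslant 8$ undercuts $\alpha_1(S_d,H)=\tfrac{3}{4}$ (Lemmas~\ref{lemma:3-4} and~\ref{lemma:3-4-general}). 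The bad $\Q$-divisor is literally a scaled integral section of a high multiple $|nH|$ carrying multiplicity that no single member of $|H|$ can see --- exactly the obstruction you named. In Lemma~\ref{lemma:quintic-septic-sextic} they go further and exhibit a single integral $M\sim 2H$ with $\mathrm{lct}_P(S_d,M)=\tfrac{3}{2d}$, so $\alpha_2(S_d,H)\leqslant\tfrac{3}{d}<\alpha_1(S_d,H)$ already for $d\geqslant 6$.

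One correction to your closing intuition: you guess the conjecture should hold for low degree because $|L|$ is then too rigid to allow clustered tangencies. The paper confirms it only up to $d=4$, and the quartic case (Section~\ref{section:quartic}) is won not by a soft rigidity argument but by an exhaustive case analysis of the tangent hyperplane section $T_P$ combined with the local inequalities of Theorems~\ref{theorem:adjunction} and~\ref{theorem:Trento}; meanwhile the conjecture already appears to fail at $d=5$. So the low/high boundary is sharper and more delicate than your heuristic suggests.
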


Note that the very ampleness of the divisor $L$ does not always imply that the associated morphism gives a projectively normal embedding.
However, in many cases this is true, for example when $X$ is a hypersurface and $L$ is a hyperplane section, which includes all varieties we study in this article.
Note also that \cite[Conjecture~5.4]{Tian2012} is stated in terms of the more delicate invariants $\alpha_{n,k}(X,L)$,
which are defined in analytic language (for their explicit definitions see \cite[\S~5]{Tian2012}).
Arguing as in the proof of \cite[Theorem~A.3]{ChSh}, one can show that
$$
\alpha_n\big(X,L\big)=\alpha_{n,1}\big(X,L\big),
$$
so that  Conjecture~\ref{conjecture:Tian} is a special case of Tian's more general \cite[Conjecture~5.4]{Tian2012}.

The purpose of this paper is to study Conjecture~\ref{conjecture:Tian} for smooth surfaces in $\mathbb{P}^3$.
Namely, let $S_d$ be a smooth surface in $\mathbb{P}^3$ of degree $d\geqslant 1$, and let $H$ be its hyperplane section.
Then the pair $(S_d,H)$ satisfies all hypotheses of Conjecture~\ref{conjecture:Tian}.
Moreover, if $d=1$ or $d=2$, then 
$$
\alpha(S_d,H)=\alpha_1(S_d,H)=1.
$$
Indeed, in these cases $S_d$ is toric, so that the required equalities follows from \cite[Lemma~5.1]{ChSh}.
Furthermore, if $d=3$, then $\alpha(S_d,H)=\alpha_1(S_d,H)$ by \cite[Theorem~1.7]{Ch08}.
In Section~\ref{section:quartic}, we prove

\begin{theorem}
\label{theorem:quartic}
Let $S_4$ be a smooth quartic surface in $\mathbb{P}^3$. Then $\alpha(S_4,H)=\alpha_1(S_4,H)$.
\end{theorem}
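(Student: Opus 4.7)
Since every element of $|H|$ is an effective $\Q$-divisor in the class of $H$, one has $\alpha(S_4,H)\leq\alpha_1(S_4,H)$ tautologically, and the theorem reduces to the reverse inequality. I would argue by contradiction: assume there is an effective $\Q$-divisor $D\sim_\Q H$ with $\mu:=\lct(S_4,D)<\alpha_1(S_4,H)$, and pick a point $p\in S_4$ at which $(S_4,\mu D)$ fails to be log canonical. The aim is to manufacture from $D$ a hyperplane section $C\in|H|$ with $\lct(S_4,C)\leq\mu$, contradicting $\mu<\alpha_1(S_4,H)$.

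The proof first disposes of the case in which the non-klt locus of $(S_4,\mu D)$ contains a curve $Z$. Such a $Z$ must be an irreducible component of $D$ with coefficient $c\geq 1/\mu>1$; combined with $D\cdot H=4$, this forces $\deg Z\leq 4\mu<4$, so $Z\subset S_4$ is a line, conic, twisted cubic, or plane cubic. In each of these (finitely many) configurations, an appropriately chosen member of the pencil of hyperplanes through $Z$ — typically a tangent one — yields the required $C\in|H|$ with $\lct(S_4,C)\leq\mu$. One may therefore assume that $(S_4,\mu D)$ is klt in a punctured neighbourhood of $p$ and only fails at $p$ itself. The essential tool for this case is the projective tangent plane $\Pi$ to $S_4$ at $p$, which cuts out a tangent hyperplane section $T_p:=S_4\cap\Pi\in|H|$ with $m:=\mult_p(T_p)\geq 2$. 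Decomposing $D=aT_p+D'$ where $D'$ shares no component with $T_p$, the intersection equation $D'\cdot T_p=4(1-a)$ together with $m\cdot\mult_p(D')\leq(D'\cdot T_p)_p$ yields $\mult_p(D)\leq am+4(1-a)/m$. This, combined with the standard surface inequality $(D\cdot C)_p>1/\mu$ that must hold for every smooth curve $C$ through $p$ not contained in $\Supp(D)$, traps the local shape of $D$ very tightly near $p$.

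The argument then concludes by case analysis on the singularity type of $(T_p,p)$: node, cusp, tacnode, higher $A_n$, ordinary triple or quadruple point, or degenerate/non-reduced configurations such as a double conic or four concurrent lines. In each case one computes $\lct_p(S_4,T_p)$ explicitly — this local value is a lower bound for $\alpha_1(S_4,H)$ — and combines it with the trapping inequalities above to verify $\lct_p(S_4,D)\geq\lct_p(S_4,T_p)$, contradicting $\mu<\alpha_1$. I expect the main obstacle to lie in the degenerate cases: when $T_p$ is non-reduced or splits into several components through $p$, the crude multiplicity bounds no longer suffice, and one must carry out an explicit log resolution at $p$, blowing up repeatedly and tracking discrepancies along the exceptional tree, to rule out every offending configuration. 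Coordinating this local bookkeeping with the global geometric possibilities — special surfaces of higher Picard rank containing lines, conics, twisted cubics, or hyperosculating planes — is the technical heart of the argument.
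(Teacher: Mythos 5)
Your high-level strategy matches the paper's: argue by contradiction, localise to a non-lc point $P$, introduce the tangent hyperplane section $T_P$, and run a case analysis on the singularity of $T_P$ at $P$, finishing with local blow-up computations. But the proposal is missing the specific technical machinery that makes those cases actually close, and it contains a couple of inaccuracies.

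The most important omission is the convexity trick (Remark~\ref{remark:convexity}), realised in the proof as Lemma~\ref{lemma:assumption-support}: because $(S_4,\lambda T_P)$ is log canonical at $P$ for $\lambda<\alpha_1$, one may replace $D$ by a divisor whose support omits at least one irreducible component $C_\star$ of $T_P$. Without that normalisation the coefficient(s) in your decomposition $D=aT_P+D'$ are unconstrained, and the inequalities you derive do not bite. Related to this, writing $D=aT_P+D'$ with a \emph{single} coefficient $a$ is too coarse when $T_P$ is reducible: the paper's case analysis requires separate coefficients on each component of $T_P$ (see the proofs of Lemmas~\ref{lemma:B1}, \ref{lemma:C1-C2}, \ref{lemma:C3}) together with intersection bounds coming from the component $C_\star$ that was excluded from $\Supp D$. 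Second, the phrase ``carry out an explicit log resolution ... tracking discrepancies'' undersells the actual difficulty: the crude inequality $\mult_P D\leq am+4(1-a)/m$ together with $\mult_P D>1/\lambda$ does \emph{not} suffice, even in the easier cases. The paper blows up exactly twice (producing the chain $E$, $F$) and applies Inversion of Adjunction (Theorem~\ref{theorem:adjunction}) together with the sharper local inequality of Theorem~\ref{theorem:Trento}, channelled through the structural Lemmas~\ref{lemma:point-not-on-exceptional} and \ref{lemma:O-E-F}; these two inequalities, not raw multiplicity bounds, are what eliminate each of the twelve configurations of $T_P$.

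Two smaller points. Your parenthetical ``this local value is a lower bound for $\alpha_1(S_4,H)$'' has the inequality reversed: since $T_P\in|H|$, one has $\alpha_1(S_4,H)\leq\lct(S_4,T_P)\leq\lct_P(S_4,T_P)$, so $\lct_P(S_4,T_P)$ is an \emph{upper} bound for $\alpha_1$. (With that correction, and relying only on the coarse estimate $\lct_P(S_4,T_P)\leq 2/\mult_P(T_P)$ together with $\alpha_1\leq 3/4$ from Lemma~\ref{lemma:3-4}, the contradiction goes through as in the paper; one never needs to compute $\lct_P(S_4,T_P)$ exactly.) Also, the case where the non-lc locus is positive-dimensional does not require the pencil-of-hyperplanes detour you sketch: Pukhlikov's Lemma~\ref{lemma:Pukhlikov} gives $a_i\leq 1$ for every component of $D$, and since $\lambda<\alpha_1\leq 3/4<1$ the pair $(S_4,\lambda D)$ is automatically log canonical at the generic point of every curve.
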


Hence, Conjecture~\ref{conjecture:Tian} holds for the pair $(S_d,H)$ provided that $d\leqslant 4$.
In particular, this gives an easy way to compute all possible values of $\alpha(S_d,H)$ for $d=4$, 
because the number $\alpha_1(S_d,H)$ is easy to compute.
However, Conjecture~\ref{conjecture:Tian} fails for \emph{general} surfaces of large degree in $\mathbb{P}^3$.
This follows from

\begin{theorem}
\label{theorem:general-surface}
Let $S_d$ be a general surface in $\mathbb{P}^3$ of degree $d\geqslant 8$.
Then $\alpha(S_d,H)<\alpha_1(S_d,H)$.
\end{theorem}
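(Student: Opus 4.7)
The approach is to exhibit an effective $\mathbb{Q}$-divisor $D\sim_{\mathbb{Q}} H$ on $S_d$ with $\lct(S_d,D)<\alpha_1(S_d,H)$, constructed as a fractional multiple of a member of $|kH|$ with a single point of high multiplicity. Two ingredients are required: a lower bound $\alpha_1(S_d,H)\geqslant\tfrac{3}{4}$, and a curve $C\in|kH|$ with $\mult_p(C)/k$ large enough to force $\lct(S_d,\tfrac{1}{k}C)<\tfrac{3}{4}$.

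For the lower bound, the Noether--Lefschetz theorem gives $\mathrm{Pic}(S_d)=\mathbb{Z}\cdot H$ for general $S_d$ with $d\geqslant 4$, so every member of $|H|$ is an irreducible plane curve of degree $d$. At a singular point $p$ of such a curve $\Pi\cap S_d$, the plane $\Pi$ must be $T_pS_d$ and $\mult_p(\Pi\cap S_d)=2$ since $S_d$ is smooth. A local Taylor expansion of $S_d$ at $p$ (writing $z=a_2(x,y)+a_3(x,y)+\cdots$) shows the singularity is of type $A_n$, with $n$ determined by the rank of $a_2$ and the higher coefficients. A jet-space dimension count on the space of smooth degree-$d$ surfaces gives, for general $S_d$: the parabolic locus $\{p:\mathrm{rank}\,a_2(p)=1\}$ is a curve (generic $A_2$), the $A_3$-locus is $0$-dimensional, and the conditions for $A_4$ or worse impose too many constraints to be satisfied. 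Since $\lct$ at an $A_n$-singularity on a smooth surface equals $\tfrac{1}{2}+\tfrac{1}{n+1}$, this gives $\alpha_1(S_d,H)\geqslant\tfrac{3}{4}$.

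For the construction of $C$, the exact sequence $0\to\mathcal{O}_{\mathbb{P}^3}(k-d)\to\mathcal{O}_{\mathbb{P}^3}(k)\to\mathcal{O}_{S_d}(k)\to 0$ combined with $H^1(\mathcal{O}_{\mathbb{P}^3}(k-d))=0$ gives
$$h^0\bigl(S_d,\mathcal{O}_{S_d}(kH)\bigr)=\binom{k+3}{3}-\binom{k-d+3}{3}=\frac{dk^2}{2}+O(k).$$
Imposing vanishing of order $m$ at a fixed point $p$ is $\binom{m+1}{2}$ linear conditions on $|kH|$, so a nonzero $C\in|kH|$ with $\mult_p(C)\geqslant m$ exists whenever $\binom{m+1}{2}<h^0(S_d,\mathcal{O}_{S_d}(kH))$; asymptotically this allows $m\sim k\sqrt{d}$. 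Setting $D=\tfrac{1}{k}C\sim_{\mathbb{Q}}H$, blowing up $p$ gives $\lct(S_d,D)\leqslant\tfrac{2}{\mult_p(D)}=\tfrac{2k}{m}$, so $\alpha(S_d,H)\leqslant\tfrac{2k}{m}$, which can be made as close to $\tfrac{2}{\sqrt{d}}$ as desired.

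For $d\geqslant 8$ we have $\tfrac{2}{\sqrt{d}}\leqslant\tfrac{1}{\sqrt{2}}<\tfrac{3}{4}$, so for $k$ sufficiently large, $\alpha(S_d,H)\leqslant\tfrac{2k}{m}<\tfrac{3}{4}\leqslant\alpha_1(S_d,H)$. The main obstacle is the lower bound on $\alpha_1$: the jet-space count excluding $A_4$ hyperplane-section singularities on a general $S_d$ requires care, and the threshold $d\geqslant 8$ is determined exactly by $\tfrac{2}{\sqrt{d}}<\tfrac{3}{4}\iff d>\tfrac{64}{9}$, which matches the theorem's hypothesis.
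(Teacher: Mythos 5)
Your proposal is correct and takes essentially the same route as the paper: bound $\alpha_1(S_d,H)\geqslant\tfrac34$ by a dimension count showing that a general $S_d$ has no tangent hyperplane section with an $\mathbb{A}_4$-or-worse singularity (the paper's Lemma~\ref{lemma:3-4-general}), and bound $\alpha(S_d,H)\leqslant\tfrac{2}{\sqrt d}$ by producing a member of $|kH|$ with multiplicity $\sim k\sqrt d$ at a point (the paper's Lemma~\ref{lemma:general-surface}, which does the same dimension count via Riemann--Roch on the blow-up rather than via the restriction exact sequence). Two minor remarks: the Noether--Lefschetz input is not actually used in your argument, and the clause ``$\mult_p(\Pi\cap S_d)=2$ since $S_d$ is smooth'' should read ``since $S_d$ is general,'' as smoothness alone does not preclude tangent sections of multiplicity $3$.
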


This result shows that it is hard to compute $\alpha(S_d,H)$ for $d\gg 0$. 
In fact, we do not know what the exact value of $\alpha(S_d,H)$ is when $d\geqslant 5$ and the surface $S_d$ is general.
One the other hand, we prove that $\alpha_1(S_d,H)=\frac{3}{4}$ for these hypersurfaces (see Lemmas~\ref{lemma:3-4} and \ref{lemma:3-4-general}).

We prove Theorem~\ref{theorem:general-surface} in Section~\ref{section:general}.
In Section~\ref{section:small-degree}, we show that Conjecture~\ref{conjecture:Tian} also fails for
\emph{some} smooth sextic and septic surfaces in $\mathbb{P}^3$.
We believe that it fails for \emph{some} smooth quintic surfaces  as well.
Unfortunately, we are unable to verify this claim at this stage, due to enormous computations required in our method (see Remark~\ref{computer}).

By \cite[Theorem~1.7]{Ch08}, Conjecture~\ref{conjecture:Tian} holds for all smooth del Pezzo surfaces,
i.e.\ smooth Fano varieties of dimension two, polarized by their anticanonical divisors.
Surfaces considered in Theorem~\ref{theorem:quartic} and \ref{theorem:general-surface} have non-negative Kodaira dimension,
so that, in particular, they are not del Pezzo surfaces.
Unfortunately, we do not know whether Conjecture~\ref{conjecture:Tian} holds for smooth del Pezzo surfaces polarised by arbitrary ample divisors.
Thus, we conclude by posing

\begin{question}[Rubinstein]
\label{question:del-Pezzo}
Let $S$ be a smooth del Pezzo surface. Is it true that
$$
\alpha\big(S,A\big)=\alpha_1\big(S,A\big)
$$
for every ample divisor $A\in\mathrm{Pic}(S)$.
\end{question}

All varieties are assumed to be algebraic, projective and defined over $\mathbb{C}$.

\section{Singularities of pairs}
\label{section:background}

In this section we present local results about effective $\mathbb{Q}$-divisors on smooth surfaces.
Almost all these results can be found in \cite[\S~6]{CoKoSm} in much more general forms.

Let $S$ be a~smooth surface, let $D$ be an effective non-zero
$\mathbb{Q}$-divisor on the surface $S$, and let $P$ be a point in
the surface $S$. Put $D=\sum_{i=1}^{r}a_iC_i$, where each $C_i$ is
an irreducible curve on $S$, and each $a_i$ is a non-negative
rational number. We assume here that all curves $C_1,\ldots,C_r$ are different.
We call $(S,D)$ a \emph{log pair}.

Let $\pi\colon\widetilde{S}\to S$ be a
birational morphism such that $\widetilde{S}$ is also smooth.
Then $\pi$ is a composition of $n$ blow ups of smooth points.
For each $C_i$, denote by $\widetilde{C}_i$ its proper transform on the surface $\widetilde{S}$.
Let $F_1,\ldots, F_n$ be $\pi$-exceptional curves.
Then
$$
K_{\widetilde{S}}+\sum_{i=1}^{r}a_i\widetilde{C}_i+\sum_{j=1}^{n}b_jF_j\sim_{\mathbb{Q}}\pi^{*}\big(K_{S}+D\big)
$$
for some rational numbers $b_1,\ldots,b_n$. Suppose, in addition,
that $\sum_{i=1}^r\widetilde{C}_i+\sum_{j=1}^n F_j$ is a divisor with
simple normal crossings.

\begin{definition}
\label{definition:lct}  The log pair $(S,D)$ is said to be \emph{log canonical} at the point $P$ if
the following two conditions are satisfied:
\begin{itemize}
\item $a_i\leqslant 1$ for every $C_i$ such that $P\in C_i$,

\item $b_j\leqslant 1$ for every $F_j$ such that $\pi(F_j)=P$.
\end{itemize}
\end{definition}

This definition is independent on the choice of birational morphism $\pi\colon\widetilde{S}\to S$
provided that the surface $\widetilde{S}$ is smooth and $\sum_{i=1}^r\widetilde{C}_i+\sum_{j=1}^n F_j$ is a divisor with
simple normal crossings. The log pair $(S,D)$ is said to be \emph{log canonical} if it is log
canonical at every point of $S$.

\begin{remark}
\label{remark:convexity} Let $R$ be any effective
$\mathbb{Q}$-divisor on $S$ such that $R\sim_{\mathbb{Q}} D$ and
$R\ne D$. Put
$$
D_{\epsilon}=(1+\epsilon) D-\epsilon R
$$
for some rational number $\epsilon\geqslant 0$. Then $D_{\epsilon}\sim_{\mathbb{Q}} D$.
Moreover, there exists the greatest rational number
$\epsilon_0\geqslant 0$ such that the divisor $D_{\epsilon_0}$ is effective.
Then $\mathrm{Supp}(D_{\epsilon_0})$ does not contain at least one
irreducible component of $\mathrm{Supp}(R)$. Moreover, if $(S,D)$
is not log canonical at $P$, and $(S,R)$ is log canonical at $P$,
then $(S,D_{\epsilon_0})$ is not log canonical at $P$ by
Definition~\ref{definition:lct}, because
$$
D=\frac{1}{1+\epsilon_0}D_{\epsilon_0}+\frac{\epsilon_0}{1+\epsilon_0}R.
$$
\end{remark}

The following result is well-known and is very easy to prove.

\begin{lemma}[{\cite[Exercise~6.18]{CoKoSm}}]
\label{lemma:Skoda} If $(S,D)$ is not log canonical at $P$, then $\mathrm{mult}_{P}(D)>1$.
\end{lemma}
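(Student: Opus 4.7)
I would prove the lemma by contraposition: assuming $\mathrm{mult}_P(D)\leqslant 1$, I would show that $(S,D)$ is log canonical at $P$. The two conditions in Definition~\ref{definition:lct} are handled separately. For the first, if $C_i$ is a component of $D$ passing through $P$, then $\mathrm{mult}_P(C_i)\geqslant 1$, and since $\mathrm{mult}_P(D)=\sum a_i\mathrm{mult}_P(C_i)$ this gives
$$
a_i\leqslant a_i\mathrm{mult}_P(C_i)\leqslant\mathrm{mult}_P(D)\leqslant 1,
$$
so the condition $a_i\leqslant 1$ is automatic.

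For the second condition, I would induct on the number of blow-ups appearing in the birational morphism $\pi\colon\widetilde{S}\to S$ restricted to the fiber over $P$ (equivalently, on the minimal number of point blow-ups needed to extract a given exceptional prime divisor centered at $P$). Let $\sigma\colon S_1\to S$ be the blow-up of $P$ with exceptional divisor $E_1$, so that
$$
K_{S_1}+\widetilde{D}+\bigl(\mathrm{mult}_P(D)-1\bigr)E_1\sim_{\mathbb{Q}}\sigma^{*}\bigl(K_S+D\bigr).
$$
The discrepancy coefficient of $E_1$ is $\mathrm{mult}_P(D)-1\leqslant 0\leqslant 1$, which already handles $F_j=E_1$.

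Any other exceptional divisor $F$ lying over $P$ is centered at some point $Q\in E_1$, and its coefficient in $\pi^{*}(K_S+D)-K_{\widetilde S}$ coincides with its coefficient computed from the log pair $(S_1,B_1)$, where $B_1=\widetilde{D}+(\mathrm{mult}_P(D)-1)E_1$. Since $B_1$ differs from the effective divisor $\widetilde{D}$ by the non-positive multiple $(\mathrm{mult}_P(D)-1)E_1$, and since decreasing coefficients can only decrease coefficients in any log pullback, it suffices to verify that $(S_1,\widetilde{D})$ is log canonical at $Q$. Now $\widetilde{D}$ is effective, and the classical strict transform inequality yields $\mathrm{mult}_Q(\widetilde{D})\leqslant\mathrm{mult}_P(D)\leqslant 1$. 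The number of blow-ups needed to extract $F$ starting from $(S_1,\widetilde{D})$ is one less than the corresponding number for $(S,D)$, so the inductive hypothesis applies and finishes the argument.

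The only real point to take care of is the passage from $(S_1,B_1)$ to $(S_1,\widetilde{D})$: the intermediate boundary $B_1$ is not effective, which makes it awkward to invoke the strict transform multiplicity bound directly on $B_1$. This is exactly why I replace $B_1$ by its effective ``positive part'' $\widetilde{D}$ and use the monotonicity of the log canonical condition under decreasing coefficients; this move is the crux of the induction, but once set up it makes the inductive step completely mechanical.
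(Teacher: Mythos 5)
The paper does not actually prove this lemma; it cites \cite[Exercise~6.18]{CoKoSm} and describes the result as well known and very easy. Your contraposition-plus-induction argument is a correct, standard proof. The two points that genuinely need care are both handled: the classical fact that multiplicity does not increase under the blow-up of a smooth point gives $\mathrm{mult}_Q(\widetilde{D})\leqslant\mathrm{mult}_P(D)\leqslant 1$, and the monotonicity observation (lowering any boundary coefficient can only lower the discrepancy coefficients $b_j$, since $\pi^{*}C_i-\widetilde{C}_i$ is an effective exceptional divisor) is exactly the right device to replace the non-effective intermediate boundary $B_1=\widetilde{D}+(\mathrm{mult}_P(D)-1)E_1$ by its positive part $\widetilde{D}$ before invoking the inductive hypothesis. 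One minor wording point: the number of blow-ups needed to extract $F$ from $(S_1,\widetilde{D})$ is \emph{at most} one less than from $(S,D)$, not exactly one less, but this only helps the induction. The inductive-on-$n$ structure is also precisely what the paper recommends at the end of Section~\ref{section:background} for the related Theorems~\ref{theorem:adjunction} and~\ref{theorem:Trento}.
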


Let $\pi_1\colon S_1\to S$ be a~blow up of the point $P$, and let
$E_1$ be the $\pi_1$-exceptional curve. Denote by $D^1$ the~proper
transform of the divisor $D$ on the surface $S_1$ via $\pi_1$.
Then
$$
K_{S_1}+D^1+\Big(\mathrm{mult}_{P}(D)-1\Big)E_1\sim_{\mathbb{Q}}\pi_1^{*}\big(K_{S}+D\big).
$$

\begin{remark}
\label{remark:log-pull-back} The log pair $(S,D)$ is log canonical
at $P$ if and only if $(S_1, D^1+(\mathrm{mult}_{P}(D)-1)E_1)$ is
log canonical at every point of the curve $E_1$.
\end{remark}

\begin{corollary}
\label{corollary:lc-mult} If $\mathrm{mult}_{P}(D)>2$, then $(S,D)$ is not log canonical at $P$.
\end{corollary}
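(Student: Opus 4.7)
The plan is to use the single blow-up $\pi_1\colon S_1\to S$ displayed just before the statement and then combine the resulting discrepancy computation with Remark~\ref{remark:log-pull-back} and Definition~\ref{definition:lct}. Set $m=\mathrm{mult}_{P}(D)$, so that the coefficient of $E_1$ in the log pull-back is $m-1$. The hypothesis $m>2$ is precisely what forces this coefficient to exceed $1$, which should make the new pair visibly non--log canonical somewhere on $E_1$.

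More concretely, first I would apply Remark~\ref{remark:log-pull-back}: it suffices to show that the pair
$$
\Big(S_1,\ D^1+(m-1)E_1\Big)
$$
fails to be log canonical at some point of $E_1$. Next I would pick a point $Q\in E_1$ that does not lie on the proper transform $D^1$; such a $Q$ exists because $D^1\cap E_1$ is a finite set (corresponding to the finitely many tangent directions at $P$ of the components of $D$). In a neighbourhood of $Q$, the support of $D^1+(m-1)E_1$ is just the smooth curve $E_1$, so the simple normal crossings hypothesis in Definition~\ref{definition:lct} is satisfied at $Q$ with $\pi=\mathrm{id}$ playing the role of the resolution.

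Finally, Definition~\ref{definition:lct} applied at $Q$ requires that every curve through $Q$ appear with coefficient $\leqslant 1$. Since $E_1$ passes through $Q$ with coefficient $m-1>1$, this condition is violated, so the pair $(S_1,D^1+(m-1)E_1)$ is not log canonical at $Q$. By Remark~\ref{remark:log-pull-back}, this means that $(S,D)$ is not log canonical at $P$, which is exactly the desired conclusion.

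There is no genuine obstacle here; the only thing to be careful about is choosing the point $Q$ off the transverse components so that one may legitimately invoke Definition~\ref{definition:lct} without performing further blow-ups to achieve simple normal crossings. Note also that Lemma~\ref{lemma:Skoda} would give only the weaker bound $m>1$ and cannot be used as a black box, which is why the blow-up reduction via Remark~\ref{remark:log-pull-back} is essential.
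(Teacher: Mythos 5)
Your argument is correct and is exactly the intended one: Remark~\ref{remark:log-pull-back} reduces the claim to observing that the coefficient $m-1>1$ of $E_1$ in the log pull-back violates the first condition of Definition~\ref{definition:lct} along $E_1$. The extra care in choosing $Q$ off $D^1$ is harmless but not strictly needed, since the coefficient condition $a_i\leqslant 1$ on components of the boundary through $Q$ is independent of the choice of resolution and already fails for $E_1$ at every point of $E_1$.
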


We can measure how far the pair $(S,D)$ is from being log canonical at $P$ by the positive rational number
$$
\mathrm{lct}_P\big(S,D\big)=\sup\Big\{\lambda\in\Q\mid\text{the log pair }(S,\lambda D)\text{ is log canonical at } P\Big\}.
$$
This number has been introduced by Shokurov and is called the \emph{log canonical threshold} of the pair $(S,D)$ at the point $P\in S$.
The log canonical threshold of the pair $(S,D)$ is defined as
$$
\mathrm{lct}\big(S,D\big)=\inf_{O\in S}\Big\{\mathrm{lct}_O(S,D)\Big\}.
$$
By Lemma~\ref{lemma:Skoda} and Corollary~\ref{corollary:lc-mult}, we have

\begin{equation}
\label{equation:lct}
\frac{2}{\mathrm{mult}_{P}(D)}\geqslant\mathrm{lct}_P\big(S,D\big)\geqslant\frac{1}{\mathrm{mult}_{P}(D)}.
\end{equation}

The following theorem is a very special case of a much more general
result known as \emph{Inversion of Adjunction} (see, for example, \cite[Theorem~6.29]{CoKoSm}).

\begin{theorem}[{\cite[Exercise~6.31]{CoKoSm}, \cite[Theorem~7]{Ch13}}]
\label{theorem:adjunction}
Suppose that $r\geqslant 2$. Put $\Delta=\sum_{i=2}^{r}a_iC_i$.
Suppose that $C_1$ is smooth at $P$, $a_1\leqslant 1$, and the log pair $(S,D)$ is not log canonical at $P$.
Then  $\mathrm{mult}_{P}(C_1\cdot\Delta)>1$.
\end{theorem}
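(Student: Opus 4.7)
The plan is to argue by induction on the minimal number of point blow-ups above $P$ needed to turn the total transform of $C_1+\Delta$ into a simple normal crossings configuration, an invariant that strictly decreases at each blow-up of a non-SNC point.

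First I would reduce to $a_1=1$: if $a_1<1$, then $(S,C_1+\Delta)$ remains non log canonical at $P$ by Definition~\ref{definition:lct}, and $\mathrm{mult}_P(C_1\cdot\Delta)$ is unchanged. Set $m=\mathrm{mult}_P(\Delta)$. Note $m>0$, since otherwise $\Delta$ avoids $P$ and the pair would be log canonical there (just $C_1$ smooth with coefficient $1$). If $m>1$ the conclusion is immediate from
$$
\mathrm{mult}_P(C_1\cdot\Delta)\geqslant\mathrm{mult}_P(C_1)\,\mathrm{mult}_P(\Delta)=m>1,
$$
so I may assume $0<m\leqslant 1$.

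Now blow up $P$ via $\pi_1\colon S_1\to S$ with exceptional curve $E_1$. The log pull-back
$$
\pi_1^{*}\bigl(K_S+C_1+\Delta\bigr)=K_{S_1}+C_1^{1}+\Delta^{1}+mE_1
$$
is not log canonical at some $Q\in E_1$ by Remark~\ref{remark:log-pull-back}. I would show $Q$ must be the unique intersection $P_1:=C_1^{1}\cap E_1$: otherwise $C_1^{1}$ does not pass through $Q$, and the inductive hypothesis applied with $E_1$ playing the role of $C_1$ (smooth at $Q$ with coefficient $m\leqslant 1$) and $\Delta^{1}$ playing the role of $\Delta$ would force $\mathrm{mult}_Q(E_1\cdot\Delta^{1})>1$. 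But $E_1\cdot\Delta^{1}=-mE_1^{2}=m\leqslant 1$ globally, a contradiction.

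At $P_1$ the curve $C_1^{1}$ is smooth with coefficient $1$, the components of $\Delta^{1}+mE_1$ are distinct from $C_1^{1}$, and the pair fails to be log canonical there. The inductive hypothesis gives
$$
\mathrm{mult}_{P_1}\bigl(C_1^{1}\cdot(\Delta^{1}+mE_1)\bigr)>1,
$$
and since $C_1^{1}$ meets $E_1$ transversally at $P_1$ (as $C_1$ is smooth at $P$), this says $\mathrm{mult}_{P_1}(C_1^{1}\cdot\Delta^{1})>1-m$. Combining with the standard formula
$$
\mathrm{mult}_P(C_1\cdot\Delta)=m+\mathrm{mult}_{P_1}(C_1^{1}\cdot\Delta^{1})
$$
for the blow-up at a smooth point of $C_1$ finishes the proof. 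The main obstacle is the bookkeeping of the induction, especially in the case $Q\ne P_1$: one must verify that $E_1$ and $\Delta^{1}$ share no common component (they do not, $E_1$ being exceptional) and that the invariant controlling the induction genuinely drops, so that the inductive invocation is legitimate.
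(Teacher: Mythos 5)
Your proof is correct and follows exactly the strategy the paper suggests (it says ``We encourage the reader to prove both Theorems~\ref{theorem:adjunction} and \ref{theorem:Trento} using induction on $n$,'' without supplying details): induction on the number of blow-ups over $P$ in a log resolution, first disposing of the case $\mathrm{mult}_P(\Delta)>1$, then using the bound $E_1\cdot\Delta^1=\mathrm{mult}_P(\Delta)\leqslant 1$ to locate the new non-lc centre at $C_1^1\cap E_1$ and combining the inductive estimate with $\mathrm{mult}_P(C_1\cdot\Delta)=\mathrm{mult}_P(\Delta)+\mathrm{mult}_{P_1}(C_1^1\cdot\Delta^1)$. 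The only thing worth making explicit is the base case $n=0$: if $C_1+\mathrm{Supp}(\Delta)$ is already simple normal crossings at $P$ and $\mathrm{mult}_P(\Delta)\leqslant 1$, the pair would be log canonical there, so the blow-up branch of your argument is never invoked when $n=0$ --- which is what makes the induction well founded.
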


This theorem implies

\begin{lemma}
\label{lemma:log-pull-back} Suppose that $(S,D)$ is not log canonical at $P$, and $\mathrm{mult}_{P}(D)\leqslant 2$.
Then there exists a unique point in $E_1$ such that $(S_1, D^1+(\mathrm{mult}_{P}(D)-1)E_1)$ is not log canonical at it.
\end{lemma}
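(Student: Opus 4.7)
The plan is to establish existence by quoting Remark~\ref{remark:log-pull-back}, and then to derive uniqueness by assuming two distinct bad points on $E_1$ and hitting a numerical contradiction through Inversion of Adjunction.

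First, set $m=\mathrm{mult}_{P}(D)$, so that by Lemma~\ref{lemma:Skoda} we have $1<m\leqslant 2$, and the coefficient $m-1$ of $E_1$ in the log pull-back $D^1+(m-1)E_1$ lies in $(0,1]$. Existence of a non-log-canonical point on $E_1$ is immediate from Remark~\ref{remark:log-pull-back}, since $(S,D)$ is not log canonical at $P$ by hypothesis.

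For uniqueness, suppose to the contrary that there are two distinct points $Q_1,Q_2\in E_1$ at which $(S_1,D^1+(m-1)E_1)$ fails to be log canonical. At each $Q_i$, first observe that $D^1$ must pass through $Q_i$: otherwise the germ of the pair at $Q_i$ would just be $(S_1,(m-1)E_1)$ with $E_1$ smooth and $m-1\leqslant 1$, hence log canonical. In particular, the log pair at $Q_i$ has at least two distinct components in its support. Now apply Theorem~\ref{theorem:adjunction} with $C_1=E_1$ (smooth, with coefficient $m-1\leqslant 1$) and $\Delta=D^1$; this is legitimate because $D^1$ does not contain $E_1$. The theorem yields
$$
\mathrm{mult}_{Q_i}\bigl(E_1\cdot D^1\bigr)>1\quad\text{for }i=1,2.
$$

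To finish, compute the total intersection number. Pulling back gives $\pi_1^{*}D=D^1+mE_1$, and intersecting with $E_1$ (using $E_1^2=-1$ and $\pi_1^{*}D\cdot E_1=0$) yields $D^1\cdot E_1=m$. Since $D^1$ and $E_1$ share no common components, the intersection number dominates the sum of local intersection multiplicities, so
$$
m=D^1\cdot E_1\;\geqslant\;\mathrm{mult}_{Q_1}\bigl(E_1\cdot D^1\bigr)+\mathrm{mult}_{Q_2}\bigl(E_1\cdot D^1\bigr)>2,
$$
contradicting $m\leqslant 2$. Hence the non-log-canonical point on $E_1$ is unique. The only mildly delicate point is checking that the hypotheses of Theorem~\ref{theorem:adjunction} really apply at each $Q_i$ (in particular, that $D^1$ is actually present at $Q_i$ so that $r\geqslant 2$), but as noted above this follows because otherwise the pair would be manifestly log canonical there.
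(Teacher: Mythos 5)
Your proof is correct and takes essentially the same approach as the paper: both establish existence via Remark~\ref{remark:log-pull-back} and derive uniqueness by applying Theorem~\ref{theorem:adjunction} at two hypothetical bad points of $E_1$ and summing local intersection multiplicities against $D^1\cdot E_1=\mathrm{mult}_P(D)\leqslant 2$. Your write-up simply spells out a couple of steps the paper leaves implicit (that $D^1$ must actually pass through each bad point so that the theorem applies, and the computation $D^1\cdot E_1=m$), but the argument is the same.
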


\begin{proof}
If $\mathrm{mult}_{P}(D)\leqslant 2$ and $(S_1, D^1+(\mathrm{mult}_{P}(D)-1)E_1)$ is not log canonical at two distinct
points $P_1$ and $\widetilde{P}_1$ of the curve $E_1$, then
$$
2\geqslant\mathrm{mult}_{P}\big(D\big)=D^1\cdot E_1\geqslant\mathrm{mult}_{P_1}\Big(D^1\cdot E_1\Big)+\mathrm{mult}_{\widetilde{P}_1}\Big(D^1\cdot E_1\Big)>2%
$$
by Theorem~\ref{theorem:adjunction}. By
Remark~\ref{remark:log-pull-back}, this proves the
assertion.
\end{proof}

A crucial role in the proof of Theorems~\ref{theorem:quartic} is played by

\begin{theorem}[{\cite[Theorem~13]{Ch13}}]
\label{theorem:Trento} Suppose that $r\geqslant 3$. Put $\Delta=\sum_{i=3}^{r}a_iC_i$.
Suppose that the curves $C_1$ and $C_2$ are smooth at $P$ and intersect each other transversally at $P$,
the log pair $(S,D)$ is not log canonical at $P$, and $\mathrm{mult}_{P}(\Delta)\leqslant 1$.
Then either
$$
\mathrm{mult}_{P}\Big(C_1\cdot\Delta\Big)>2\big(1-a_{2}\big)
$$
or
$$
\mathrm{mult}_{P}\Big(C_1\cdot\Delta\Big)>2\big(1-a_{1}\big)
$$
(or both).
\end{theorem}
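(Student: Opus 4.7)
The natural strategy is to blow up $P$ and apply Theorem~\ref{theorem:adjunction} on the resulting surface, in the spirit of the proof of Lemma~\ref{lemma:log-pull-back}. Several preliminary reductions streamline the argument. If $a_i>1$ for some $i\in\{1,2\}$, then $2(1-a_i)<0\leqslant\mult_{P}(C_1\cdot\Delta)$ and the relevant inequality in the conclusion is automatic, so we may assume $a_1,a_2\leqslant 1$. Next, since $C_1$ is smooth at $P$, one has $\mult_{P}(C_1\cdot\Delta)\geqslant\mult_{P}(\Delta)=:m$, so if $m>2(1-a_i)$ for either $i$ the conclusion is immediate. We may therefore also assume $m\leqslant 2(1-a_1)$ and $m\leqslant 2(1-a_2)$, which together force $\mult_{P}(D)=a_1+a_2+m\leqslant 2$.

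Let $\pi\colon S_1\to S$ be the blow-up of $P$, with exceptional curve $E_1$, and let $\widetilde{C}_1,\widetilde{C}_2,\widetilde{\Delta}$ denote the proper transforms. By the transversality of $C_1$ and $C_2$ at $P$, the curves $\widetilde{C}_1$ and $\widetilde{C}_2$ meet $E_1$ at two distinct points $Q_1,Q_2\in E_1$. The log pull-back
\[
K_{S_1}+a_1\widetilde{C}_1+a_2\widetilde{C}_2+\widetilde{\Delta}+(a_1+a_2+m-1)E_1\sim_{\mathbb{Q}}\pi^*(K_S+D)
\]
is, by Remark~\ref{remark:log-pull-back}, not log canonical at some point $Q\in E_1$, and the coefficient of $E_1$ is $\leqslant 1$ by the preceding reduction.

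A three-way case analysis on the location of $Q$ completes the proof. If $Q\notin\widetilde{C}_1\cup\widetilde{C}_2$, then only $E_1$ (with coefficient $\leqslant 1$) and possibly components of $\widetilde{\Delta}$ pass through $Q$; the pair would then either be log canonical at $Q$ or, by Theorem~\ref{theorem:adjunction} applied with $E_1$ in the distinguished role, satisfy $\mult_{Q}(E_1\cdot\widetilde{\Delta})>1$, contradicting $E_1\cdot\widetilde{\Delta}=m\leqslant 1$. Hence $Q$ equals $Q_1$ or $Q_2$. If $Q=Q_1$, applying Theorem~\ref{theorem:adjunction} with $\widetilde{C}_1$ as the distinguished curve (noting that $\widetilde{C}_2$ avoids $Q_1$) gives
\[
\mult_{Q_1}(\widetilde{C}_1\cdot\widetilde{\Delta})+(a_1+a_2+m-1)>1,
\]
which, combined with the standard blow-up identity $\mult_{P}(C_1\cdot\Delta)=m+\mult_{Q_1}(\widetilde{C}_1\cdot\widetilde{\Delta})$, yields $\mult_{P}(C_1\cdot\Delta)>2-a_1-a_2\geqslant 2-2\max(a_1,a_2)$, which is precisely the stated disjunction. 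The case $Q=Q_2$ is handled by the symmetric argument applied to $\widetilde{C}_2$, producing the analogous inequality for $\mult_{P}(C_2\cdot\Delta)$, which (in view of the symmetric roles of $C_1$ and $C_2$ in the hypotheses) delivers the conclusion after relabeling.

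The chief obstacle is the bookkeeping needed to keep the hypotheses of Theorem~\ref{theorem:adjunction} satisfied on the blow-up: the coefficient of $E_1$ must be shown to be $\leqslant 1$ (which is exactly what necessitates the second preliminary reduction via $\mult_{P}(C_1\cdot\Delta)\geqslant m$), each distinguished curve must have coefficient $\leqslant 1$ in its application of inversion of adjunction, and the two points $Q_1,Q_2$ must be genuinely distinct, which is exactly where the transversality hypothesis enters. Once the set-up is correct, the local intersection computation is routine.
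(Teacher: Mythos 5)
First, note that the printed statement contains a typo which may be confusing you: the intended conclusion (compare the uses in Section~\ref{section:quartic} and \cite[Theorem~13]{Ch13}) is
$$
\mathrm{mult}_P\big(C_1\cdot\Delta\big)>2(1-a_2)\qquad\text{or}\qquad\mathrm{mult}_P\big(C_2\cdot\Delta\big)>2(1-a_1),
$$
not two inequalities both about $C_1\cdot\Delta$.

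Your set-up is sound: the reductions $a_1,a_2\leqslant 1$ and $m\leqslant 2(1-a_i)$ (whence $\mathrm{mult}_P(D)\leqslant 2$), the unique non-log-canonical point $Q\in E_1$, the elimination of $Q\notin\widetilde C_1\cup\widetilde C_2$ via $E_1\cdot\widetilde\Delta=m\leqslant 1$, and the transversality giving $Q_1\neq Q_2$. The gap is in the key step. A single application of Theorem~\ref{theorem:adjunction} on the blown-up surface gives, in the case $Q=Q_1$, only $\mathrm{mult}_P(C_1\cdot\Delta)>2-a_1-a_2$. But $2-a_1-a_2<2(1-a_2)$ whenever $a_1>a_2$, so this does not yield the first alternative; and you obtain no lower bound at all on $\mathrm{mult}_P(C_2\cdot\Delta)$ from this case, so the second alternative is equally out of reach. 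The same problem occurs symmetrically for $Q=Q_2$. In short, one application of inversion of adjunction simply cannot reproduce the factor $2$ in $2(1-a_i)$.

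The missing idea is the one the paper points to in the sentence immediately after the theorem: the proof is an \emph{induction} on the depth of a log resolution, and at $Q_1$ you must apply Theorem~\ref{theorem:Trento} itself, not merely Theorem~\ref{theorem:adjunction}. Concretely, at $Q_1$ the curves $\widetilde C_1$ and $E_1$ play the roles of $C_1$ and $C_2$, with coefficients $a_1$ and $a_1+a_2+m-1$ respectively (both lie in $(0,1]$ by your reductions), and $\mathrm{mult}_{Q_1}(\widetilde\Delta)\leqslant m\leqslant 1$. The inductive conclusion then gives either $\mathrm{mult}_{Q_1}(\widetilde C_1\cdot\widetilde\Delta)>2(2-a_1-a_2-m)$, whence $\mathrm{mult}_P(C_1\cdot\Delta)=m+\mathrm{mult}_{Q_1}(\widetilde C_1\cdot\widetilde\Delta)>4-2a_1-2a_2-m\geqslant 2(1-a_2)$ using $m\leqslant 2(1-a_1)$; or $\mathrm{mult}_{Q_1}(E_1\cdot\widetilde\Delta)>2(1-a_1)$, which, since $E_1\cdot\widetilde\Delta=m$, forces $m>2(1-a_1)$ and hence $\mathrm{mult}_P(C_2\cdot\Delta)\geqslant m>2(1-a_1)$. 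Without this recursion your argument proves a genuinely weaker inequality than the theorem asserts.
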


Recall that $\pi$ is a composition of $n$ blow ups of smooth points.
We encourage the reader to prove both Theorems~\ref{theorem:adjunction} and \ref{theorem:Trento} using induction on $n$.

\section{Smooth surfaces in $\mathbb{P}^3$}
\label{section:surfaces}

In this section we collect global results about smooth surfaces in $\mathbb{P}^3$.
These results will be used in the proof of Theorems~\ref{theorem:quartic} and \ref{theorem:general-surface}.

Let $S_d$ be a smooth surface in $\mathbb{P}^3$ of degree $d$. Denote by $H$ its hyperplane section.
Then
$$
1\geqslant\alpha\big(S_d, H\big)\geqslant\frac{1}{d}.
$$
by Lemma~\ref{lemma:Skoda}.
These bounds are not optimal for $d\geqslant 2$.
In fact, if $d\geqslant 2$, then $\alpha(S_d, H)\geqslant\frac{2}{d}$.
Moreover, $\alpha(S_d, H)=\frac{2}{d}$ if and only if $S_d$ contains a so-called \emph{star point},
i.e., a point that is an intersection of $d$ lines contained in $S_d$.
This follows from \cite[Corollary~1.27]{Ch14}.
A slightly better upper bound for $\alpha(S_d, H)$ follows from

\begin{lemma}
\label{lemma:3-4}
Suppose that $d\geqslant 3$. Then $\alpha_1(S_d,H)\leqslant\frac{3}{4}$.
\end{lemma}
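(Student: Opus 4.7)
The plan is to exhibit a single hyperplane section $D\in|H|$ of $S_d$ with $\mathrm{lct}(S_d,D)\leqslant 3/4$; by definition of $\alpha_1$ this immediately yields $\alpha_1(S_d,H)\leqslant 3/4$.

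First I would produce a point $P\in S_d$ together with a line $L\subset\mathbb{P}^3$ through $P$ that is tangent to $S_d$ at $P$ to very high order, in the following precise sense: either $L\subset S_d$ and $P$ is a parabolic point of $S_d$ lying on $L$, or $L\not\subset S_d$ and the intersection multiplicity $(L\cdot S_d)_P\geqslant 4$. Such a pair $(P,L)$ exists on every smooth $S_d$ with $d\geqslant 3$. For $d=3$ a smooth cubic contains $27$ lines, and the parabolic curve on $S_3$ (of class $4(d-2)H=4H$, cut out by the Hessian surface of degree $4(d-2)=4$) meets each such line in $4$ points, producing a parabolic point $P$ on a line $L\subset S_3$. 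For $d\geqslant 4$ one appeals to the classical fact that the flecnodal locus of $S_d$ is a non-empty effective divisor on $S_d$ (of class $(11d-24)H$ by the Salmon--Cayley formula, positive for $d\geqslant 3$), providing a point $P$ through which some asymptotic line $L$ has contact of order at least $4$.

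Now set $H_0:=T_PS_d$ and $D:=H_0\cap S_d$. Then $L\subset H_0$ and $\mathrm{mult}_P(D)\geqslant 2$. Since $L\subset H_0$, the intersection cycle $L\cdot D$ computed in the plane $H_0$ coincides with $L\cdot S_d$ as divisors on $L$, so either $L\subset D$ (equivalently $L\subset S_d$) or $(L\cdot D)_P\geqslant 4$. A local analysis at $P$ then delivers $\mathrm{lct}_P(S_d,D)\leqslant 3/4$ in three exhaustive cases. If $\mathrm{mult}_P(D)\geqslant 3$, (\ref{equation:lct}) already gives $\mathrm{lct}_P(S_d,D)\leqslant 2/3$. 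If $\mathrm{mult}_P(D)=2$ and $L\not\subset D$, the inequality $(L\cdot D)_P\geqslant 4$ forces the double point of $D$ at $P$ to be of type $A_n$ with $n\geqslant 3$, since at an $A_k$-point of a plane curve any line through it has intersection multiplicity at most $k+1$ (as one checks in the normal form $y^2+x^{k+1}=0$), and a routine blow-up computation gives $\mathrm{lct}_P(S_d,D)=\tfrac{1}{2}+\tfrac{1}{n+1}\leqslant 3/4$. Finally, if $L\subset D$ with $\mathrm{mult}_P(D)=2$, writing $D=L+D'$ with $D'$ a plane curve of degree $d-1$, the parabolic condition at $P$ pins the tangent cone of $D$ at $P$ to be $2L$, so $D'$ is smooth at $P$ and tangent to $L$ there, producing at least an $A_3$-tacnode with log canonical threshold $3/4$.

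The main obstacle is the first step, the existence of the pair $(P,L)$. For $d=3$ this is completely explicit thanks to the $27$ lines on the cubic, and amounts to intersecting each line with the Hessian quartic. For $d\geqslant 4$ one must invoke the classical Salmon--Cayley enumerative identification of the flecnodal class as $(11d-24)H$ and the fact that it is represented by a non-empty effective divisor, which is classical but non-trivial; once this is granted the local analysis above is routine.
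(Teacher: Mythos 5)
There is a genuine gap in your second case, and it is a fatal one. You claim that if $\mathrm{mult}_P(D)=2$, $L\not\subset D$, and $(L\cdot D)_P\geqslant 4$, then the singularity of $D$ at $P$ must be of type $\mathbb{A}_n$ with $n\geqslant 3$, citing the normal form $y^2+x^{k+1}=0$. This implication is false. Consider the plane curve germ
$$
D:\ \big(y-x^3\big)\big(y+x\big)=0,\qquad L=\{y=0\},\qquad P=(0,0).
$$
Here $\mathrm{mult}_P(D)=2$, the tangent cone is $y(y+x)$ (two distinct lines, so $D$ has a node, i.e.\ an $\mathbb{A}_1$-singularity at $P$), the line $L$ is not a component of $D$, and yet $D|_L=-x^4$, so $(L\cdot D)_P=4$. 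The log canonical threshold at a node is $1$, not $\leqslant 3/4$. The flaw in the normal-form argument is that $y^2+x^{k+1}=0$ is attained only after an \emph{analytic} change of coordinates, which does not send lines to lines; the bound $k+1$ you read off constrains intersections with images of lines (arbitrary smooth germs), not with the original line $L$. Geometrically: at a flecnodal point that is not parabolic, the tangent hyperplane section has a node with one branch having a flex along the asymptotic line $L$, so $(L\cdot S_d)_P\geqslant 4$ but $\mathrm{lct}_P(S_d,D)=1$. The flecnodal locus is therefore the wrong object; non-emptiness of the flecnodal curve gives you flecnodes, not tacnodes. To get an $\mathbb{A}_{\geqslant 3}$-singularity of $T_PS_d\cap S_d$ you need both conditions simultaneously: $P$ parabolic (tangent cone a double line $2L$) and the unique asymptotic line $L$ with $(L\cdot S_d)_P\geqslant 4$. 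This is a strictly stronger and less classical locus (the ``cusps of the Gauss map'' in the Platonova--Landis stratification), and its non-emptiness is exactly what needs to be established.

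The paper sidesteps this entirely. For $d\geqslant 4$ it sets up an incidence variety $\mathcal{Z}\subset\mathcal{X}\times\mathcal{Y}$ of pairs (surface, flag $(P,L,E)$) for which $X\cap E$ has an $\mathbb{A}_3$-or-worse singularity at $P$ with tangent $L$ — i.e.\ it parametrises the desired singularity type directly, not the flecnodal condition. The fibres of $\mathcal{Z}\to\mathcal{Y}$ are then linear of codimension $6$, so $\dim\mathcal{Z}=\dim\mathcal{X}$, and exhibiting one surface with a finite fibre (the paper gives $x^4+y^4+z^4+w^4+(x^2+y^2+z^2+w^2)^2=0$) makes the first projection dominant, hence surjective. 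Your case $d=3$ (a parabolic point on one of the $27$ lines, located via the Hessian quartic, producing a tacnode or three concurrent lines) is correct and is essentially the paper's argument in slightly different clothing. But for $d\geqslant 4$ you would need to replace ``flecnodal locus of class $(11d-24)H$'' by an argument establishing the existence of a parabolic point where the asymptotic direction has contact $\geqslant 4$ (for instance by a dimension count along the lines of the paper, or by computing the class of the cusp locus of the Gauss map), and then route the local analysis exclusively through your cases~1 and~3, deleting case~2 as stated.
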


\begin{proof}
Let us first consider the case $d=3$.
Then $S_3$ is a smooth cubic surface in $\mathbb{P}^3$.
It is well-known that $S_3$ contains $27$ lines.
Taking hyperplane sections of the cubic surface $S_3$ passing through one of these lines $L_1$,
we see that either there exists a conic $C$ in $S_3$ such that 
$$
L_1+C\sim H
$$
and $L_1$ is tangent to $C$, or $S_3$ contains two more lines $L_2$ and $L_3$ such that 
$$
L_1+L_2+L_3\sim H
$$
and all three lines $L_1$, $L_2$ and $L_3$ intersect in a single point.
In the former case, one has $\alpha_1(S_d,H)\leqslant\frac{3}{4}$ by definition of $\alpha_1(S_d,H)$.
Similarly, in the later case, one has $\alpha_1(S_d,H)\leqslant\frac{2}{3}$.

We proved the required assertion in the case $d=3$.
Now let us prove it for $d=4$. The proof is similar for higher degrees.

Let $\mathcal{X}\cong\PP^{34}$ be the variety of all quartics in four variables,
and suppose $\mathcal{Y}$ is the variety of all complete flag varieties in $\mathbb{P}^3$, hence $\mathcal{Y}$ is a projective variety of dimension $6$.
Consider the incidence variety $\mathcal{Z}\subset \mathcal{X}\times \mathcal{Y}$ consisting of all pairs $(X, Y)$,
where  $Y=(P, L, E)$, such that $X\cap E$ has an $\mathbb{A}_3$, or worse, singularity at $P$ with tangent $L$.
We claim that the fibres of the second projection are linear subspaces of codimension $6$.
To show this, we choose a coordinate system such that $P$, $L$ and $E$ are, respectively, defined by $x=y=z=0$, $x=y=0$ and $x=0$.
Then the fibre of $\mathcal{Y}$ is the set of quartics such that the coefficients of the monomials
$$
yzw^2, yw^3, z^3w, z^2w^2, zw^3, w^4
$$
are equal to zero.

Therefore it follows that $\mathcal{Z}$ is irreducible and has dimension $34+6-6=34$.
In order to complete the proof, we need to show that the first projection is surjective.
Since it is a projective map, the image $\mathcal{W}\subset \mathcal{X}$ is closed.
We claim that there exists a point $X\in W$ with finite fibre.
Then the generic fibre is finite and $\dim(\mathcal{W})=\dim(\mathcal{Z})=34$.

A quartic surface corresponds to a point $X_0\in\mathcal{W}$ with finite fiber if it is nonsingular and the intersections
with its tangent planes do not have triple points; equivalently, the rank of the hessian of the
equation of the surface never drops to $2$. An example of such a surface is given by the equation
$$
x^4+y^4+z^4+w^4+(x^2+y^2+z^2+w^2)^2=0.
$$
\end{proof}

Arguing as in the proof of \cite[Proposition~2.1]{ChPaWo14}, we get

\begin{lemma}
\label{lemma:3-4-general}
Suppose that $S_d$ is a general surface in $\mathbb{P}^3$ of degree $d$.
Then $\alpha_1(S_d,H)\geqslant\frac{3}{4}$.
\end{lemma}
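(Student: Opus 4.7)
The plan is to prove $\alpha_1(S_d,H) \geq 3/4$ by showing that, for a general $S_d$, every hyperplane section $D \in |H|$ is a reduced plane curve with singularities at worst of type $\mathbb{A}_3$, which forces $\lct(S_d,D) \geq 3/4$.

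I would begin with the classification of log canonical thresholds for reduced plane curve singularities. At smooth points and nodes $\mathbb{A}_1$ the threshold equals $1$; at a cusp $\mathbb{A}_2$ it equals $5/6$; at the $\mathbb{A}_3$ singularity $x^2+y^4=0$ it equals exactly $3/4$; at $\mathbb{A}_n$ with $n \geq 4$ it equals $\tfrac{n+3}{2(n+1)} < 3/4$; and at any singularity of multiplicity $\geq 3$ (covering $\mathbb{D}_n$, $\mathbb{E}_n$ and worse) it is at most $2/3$ by \eqref{equation:lct}. Non-reduced divisors give $\lct \leq 1/2$ directly. Since $S_d$ is smooth, each local threshold $\lct_P(S_d,D)$ coincides with the plane-curve threshold of $D \subset \Pi$ at $P$. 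Hence it suffices to rule out, on a general $S_d$, any hyperplane section exhibiting one of these forbidden local pictures.

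I would execute this by incidence-variety arguments in $\mathcal{X} = \PP(H^0(\PP^3,\mathcal{O}(d)))$, modelled on the proof of Lemma \ref{lemma:3-4}. For the case of $\mathbb{A}_4$ (or worse $\mathbb{A}_n$) I take the same flag variety $\mathcal{Y}$ of triples $(P,L,\Pi)$ with $\dim \mathcal{Y} = 6$, and impose one additional vanishing condition beyond the six displayed in Lemma \ref{lemma:3-4}, namely the coefficient distinguishing $x^2+y^4$ from $x^2+y^5$ after the standard coordinate change absorbing cubic terms divisible by $y$. This raises the codimension of the fibre over $\mathcal{Y}$ to $7$, so the incidence variety has dimension $\dim \mathcal{X} - 1$ and cannot surject onto $\mathcal{X}$. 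For multiplicity-$\geq 3$ singularities I instead use the variety of pairs $(P,\Pi)$ with $P \in \Pi$, of dimension $5$; the vanishing of the $2$-jet of $f|_\Pi$ at $P$ imposes $1+2+3 = 6$ linear conditions on $f$, and again the incidence variety has dimension $\dim \mathcal{X} - 1$. For non-reducedness of $S_d \cap \Pi$ the plane $\Pi$ would have to be tangent to $S_d$ along an entire curve; a general smooth $S_d \subset \PP^3$ is non-developable, so its Gauss map has $2$-dimensional image and no such plane exists.

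The main obstacle is the $\mathbb{A}_n$ normal-form computation: one must verify that the coordinate changes used to kill lower-order terms leave the distinguishing coefficient of $y^n$ (after normalisation) an independent non-trivial linear function of the original coefficients of the defining polynomial $f$, so that passing from the $\mathbb{A}_3$ condition to the $\mathbb{A}_4$ condition genuinely cuts out an additional hyperplane in each fibre over $\mathcal{Y}$. Once this is verified, the finite union of the bad loci produced above is a proper closed subvariety of $\mathcal{X}$, and its complement is a non-empty Zariski-open set of surfaces $S_d$ on which $\alpha_1(S_d,H) \geq 3/4$.
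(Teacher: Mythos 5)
Your overall approach is the paper's: parametrise the bad local pictures by an incidence variety over the flag variety $\mathcal{Y}$, count dimensions, and conclude the first projection is not dominant. A few points of comparison. First, the ``main obstacle'' you flag would actually sink you as stated: the seventh condition that separates $\mathbb{A}_{\geqslant 4}$ from $\mathbb{A}_3$ is \emph{not} linear. In coordinates where the six linear conditions already hold and the restricted quartic reads $c_{20}y^2+c_{12}yz^2+c_{03}z^3+c_{04}z^4+\cdots$ (with $c_{03}=0$), completing the square in $y$ gives $h(z)=\bigl(c_{04}-\tfrac{c_{12}^2}{4c_{20}}\bigr)z^4+O(z^5)$, so the $\mathbb{A}_{\geqslant 4}$ condition is $4c_{20}c_{04}-c_{12}^2=0$, a quadric — and the paper says exactly this (``$6$ linear and one quadratic equation''). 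What you actually need is only that this polynomial is nonzero on the six-codimensional linear fibre, hence cuts codimension one more; insisting on a ``hyperplane'' would lead you to a fruitless verification. Second, your explicit separate treatment of multiplicity-$\geqslant 3$ singularities (via pairs $(P,\Pi)$, six linear conditions on the $2$-jet, incidence variety of dimension $\dim\mathcal{X}-1$) is a genuine improvement in rigour over the paper's one-line treatment: an ordinary triple point, with squarefree tangent cone, fails the paper's seven conditions for every choice of $L$ (the quadratic becomes $c_{12}^2=0$, forcing $y^2$ to divide the cubic form), yet has $\lct=2/3<3/4$ and must be excluded. Third, for non-reduced hyperplane sections you do not need the Gauss-map/non-developability argument: Pukhlikov's Lemma~\ref{lemma:Pukhlikov} already forces every hyperplane section of any smooth $S_d$ to be reduced, so this case is vacuous. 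With the quadraticity point corrected, your argument is sound and in fact slightly more complete than the published one.
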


\begin{proof}
Similar as in the proof of Lemma~\ref{lemma:3-4}, we define $\mathcal{X}\cong\PP^{{d+3\choose 3}-1}$,
$\mathcal{Y}$ the variety of all complete flag varieties,
and $\mathcal{Z}\subset \mathcal{X}\times\mathcal{Y}$ the incidence consisting of all pairs $(X, Y)$,
where $Y=(P, L, E)$, such that $X\cap E$ has an $\mathbb{A}_4$, or worse, singularity at $P$ with tangent $L$.
Now the fibers of the second projection have codimension $7$ (defined by $6$ linear and one quadratic equation).
Since $\dim(\mathcal{Y})=6$, it follows that
$\dim(\mathcal{Z})<\dim(\mathcal{X})$, hence the first projection cannot be surjective and the generic surface has no corresponding
point in $\mathcal{Z}$. This shows that its hyperplane sections have only singularities of type $\mathbb{A}_1$, $\mathbb{A}_2$, and $\mathbb{A}_3$.
\end{proof}

The following result is due to Pukhlikov.

\begin{lemma}
\label{lemma:Pukhlikov}
Let $D$ be an effective $\mathbb{Q}$-divisor on $S_d$ such that $D\sim_{\mathbb{Q}} H$,
and let $P$ be a point in the surface $S_d$.
Put $D=\sum_{i=1}^{r}a_iC_i$, where each $C_i$ is an irreducible curve,
and each $a_i$ is a non-negative rational number.
Then each $a_i$ does not exceed~$1$.
\end{lemma}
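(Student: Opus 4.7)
The plan is to argue by contradiction and to use projective normality of the hypersurface $S_d\subset\PP^3$ to reduce the statement to a degree count for homogeneous ideals in $\PP^3$. Suppose that $a_1>1$, and pick a positive integer $n$ clearing the denominators of $a_1,\ldots,a_r$, so that $nD$ is an integral member of $|nH|$.

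Because $S_d$ is a hypersurface of degree $d$, the exact sequence
$$
0\longrightarrow\mathcal{O}_{\PP^3}(n-d)\xrightarrow{\cdot G}\mathcal{O}_{\PP^3}(n)\longrightarrow\mathcal{O}_{S_d}(nH)\longrightarrow 0,
$$
where $G$ is the defining equation of $S_d$, together with the vanishing $H^1(\PP^3,\mathcal{O}(n-d))=0$, shows that $nD$ is cut out on $S_d$ by the restriction of a nonzero homogeneous form $F$ of degree $n$ on $\PP^3$. Since $S_d$ is smooth, $C_1$ is a Cartier divisor on $S_d$; the inclusion $na_1C_1\leqslant nD$ of effective divisors then translates to $F|_{S_d}$ lying in the $na_1$-th power of the ideal sheaf of $C_1$ in $S_d$. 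Lifting this back to $\PP^3$ via the surjection above, I can write
$$
F=F'+G\cdot Q,
$$
with $Q$ homogeneous of degree $n-d$ (or $Q=0$ if $n<d$) and $F'$ a homogeneous element of the $na_1$-th power $I_{C_1}^{\,na_1}$ of the homogeneous ideal $I_{C_1}$ of $C_1$ in $\PP^3$. Since $F|_{S_d}\neq 0$, the form $F$ is not divisible by $G$, and hence $F'\neq 0$.

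The contradiction is then a pure degree count. Every nonzero element of $I_{C_1}$ has degree at least $1$, because $C_1$ is a proper subvariety of $\PP^3$; consequently every nonzero homogeneous element of $I_{C_1}^{\,na_1}$ has degree at least $na_1$. But $\deg F'=n<na_1$ under the assumption $a_1>1$, a contradiction.

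The main technical point is justifying, from the divisorial condition $\mult_{C_1}(nD)\geqslant na_1$, the membership of $F$ in the homogeneous ideal $I_{C_1}^{\,na_1}+(G)$ of $\PP^3$. This hinges on $C_1$ being Cartier on the smooth surface $S_d$, which makes the ideal of $C_1$ in $S_d$ locally principal and identifies the $na_1$-th power of that ideal with the sheaf of sections of $\mathcal{O}_{S_d}$ vanishing to order $\geqslant na_1$ along $C_1$; projective normality of $S_d$ then lifts the assertion from $S_d$ to $\PP^3$.
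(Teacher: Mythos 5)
There is a genuine gap in the lifting step, and it is precisely the point where the whole weight of the argument lies. From projective normality you get surjectivity of the restriction
$$
H^0\bigl(\PP^3,\mathcal{O}_{\PP^3}(n)\bigr)\longrightarrow H^0\bigl(S_d,\mathcal{O}_{S_d}(n)\bigr),
$$
which is why a form $F$ of degree $n$ cutting out $nD$ exists. But to write $F=F'+GQ$ with $F'$ a homogeneous element of $I_{C_1}^{\,na_1}$, you need the much stronger surjectivity
$$
H^0\bigl(\PP^3,\mathcal{I}_{C_1/\PP^3}^{\,na_1}(n)\bigr)\longrightarrow H^0\bigl(S_d,\mathcal{I}_{C_1/S_d}^{\,na_1}(n)\bigr),
$$
and projective normality of $S_d$ does not give this. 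The local computation you indicate (using that $C_1$ is Cartier on the smooth surface $S_d$) shows only that at each point $P\in C_1$ the germ of $F$ lies in $\bigl(\mathcal{I}_{C_1/\PP^3}^{\,na_1}+\mathcal{I}_{S_d}\bigr)_P$; it does not produce a single global $Q$. Passing from these local decompositions to a global one requires the vanishing of a higher cohomology group of roughly the form $H^1\bigl(\PP^3,\mathcal{I}_{C_1}^{\,na_1-1}(n-d)\bigr)$, which controls sections of thickened infinitesimal neighbourhoods of $C_1$ in $\PP^3$ and is not automatic for a space curve $C_1$ of unbounded degree. Moreover, establishing such a vanishing would essentially require the very degree bounds that the lemma is asserting, so one is in danger of circularity. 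As a symptom of the problem, note that your degree count never uses $d$ or $\deg C_1$, whereas any argument must, at least implicitly, exploit that $D\sim_\Q H$ rather than $D\sim_\Q mH$ for arbitrary $m$: for $D\sim_\Q mH$ the coefficients can certainly exceed $1$.

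For contrast, the paper's proof is intersection-theoretic and entirely avoids this difficulty: project from a general point of $\PP^3$ to get the cone $X$ over $C_i$, write $X\cap S_d=C_i+\widehat{C}_i$ with $\widehat{C}_i$ irreducible of degree $(d-1)\deg C_i$ and not contained in $\Supp(D)$, and then compare $\deg(\widehat{C}_i)=D\cdot\widehat{C}_i$ with $a_i\,C_i\cdot\widehat{C}_i=a_i\deg(\widehat{C}_i)$ using that $C_i\cdot\widehat{C}_i=\deg\widehat{C}_i$ by smoothness of $S_d$. This cleanly produces $a_i\leqslant 1$ with no cohomology. If you want to salvage your approach, you would have to supply the missing $H^1$-vanishing; it is not a formality.
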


\begin{proof}
Let $X$ be a cone over the curve $C_i$ whose vertex is a sufficiently general point in $\mathbb{P}^3$.
Then
$$
X\cap S=C_i+\widehat{C}_i,
$$
where $\widehat{C}_i$ is an irreducible curve of degree $(d-1)\mathrm{deg}(C_i)$.
Moreover, $\widehat{C}_i$ is not contained in the support of the divisor $D$.
Furthermore, the intersection $C_i\cap\widehat{C}_i$ consists of $\mathrm{deg}(\widehat{C}_i)$ different points,
because the surface $S_d$ is smooth. Thus, we have
$$
\mathrm{deg}\big(\widehat{C}_i\big)=D\cdot \widehat{C}_i\geqslant a_iC_i\cdot \widehat{C}_i\geqslant a_i\mathrm{deg}\big(\widehat{C}_i\big),
$$
which implies that $a_i\leqslant 1$.
\end{proof}

For an alternative proof of Pukhlikov's lemma, see the proof of \cite[Lemma~5.36]{CoKoSm}.

\section{Quartic surfaces}
\label{section:quartic}

In this section, we prove Theorem~\ref{theorem:quartic}.
Let $S_4$ be a smooth quartic surface in $\mathbb{P}^3$. Denote by $H$ its hyperplane section.
By definition, one has $\alpha(S_4,H)\leqslant\alpha_1(S_4,H)$.
We must show that $\alpha(S_4,H)=\alpha_1(S_4,H)$.
Suppose that $\alpha(S_4,H)<\alpha_1(S_4,H)$.
Let us seek for a contradiction.

Since $\alpha(S_4,H)<\alpha_1(S_4,H)$, there exists an effective $\Q$-divisor $D$ such that
$D\sim_{\mathbb{Q}} H$ and $(S_4,\lambda D)$ is not log canonical for some $\lambda<\alpha_1(S_4,H)$.
Since $\alpha_1(S_4,H)\leqslant\frac{3}{4}$ by Lemma~\ref{lemma:3-4}, we have

\begin{equation}
\label{equation:lambda-3-4}
\lambda<\frac{3}{4}.
\end{equation}

By Lemma~\ref{lemma:Pukhlikov}, the log pair $(S_4,\lambda D)$ is log canonical outside of finitely many points.
Let $P$ be one of these points at which $(S_4,\lambda D)$ is not log canonical.
Consider the quartic curve $T_P$ that is cut out on $S_4$ by the hyperplane in $\PP^3$ that is tangent to $S_4$ at the point $P$.
Then $T_P$ is a reduced plane quartic curve Lemma~\ref{lemma:Pukhlikov}.
It is singular at the point $P$ by construction.

\begin{lemma}
\label{lemma:curve-C}
The curve $T_P$ contains all lines in $S_4$ that passes through $P$.
\end{lemma}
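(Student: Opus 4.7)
The plan is to reduce the claim to a simple incidence statement in projective geometry: a line through $P$ on $S_4$ must lie in the tangent plane to $S_4$ at $P$, and hence in its intersection with $S_4$.

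To carry this out, let $L$ be any line contained in $S_4$ passing through $P$, and let $\Pi\subset\mathbb{P}^3$ be the tangent hyperplane to $S_4$ at $P$, so that $T_P=S_4\cap\Pi$ by definition. First I would observe that a projective line is its own embedded tangent line at each of its points; that is, viewing tangent spaces as linear subvarieties of $\mathbb{P}^3$, we have $T_PL=L$. On the other hand, since $L\subset S_4$ and $L$ is smooth at $P$, functoriality of tangent spaces under closed embeddings gives $T_PL\subset T_PS_4=\Pi$. Combining these two observations yields $L\subset\Pi$, and therefore $L\subset S_4\cap\Pi=T_P$.

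The only delicate point is to be sure that $T_P$ really is the scheme-theoretic intersection $S_4\cap\Pi$ as a divisor on $S_4$, so that ``$L\subset T_P$'' makes sense in the sense required for the later arguments. This is immediate from the construction of $T_P$ as the plane quartic curve cut out on $S_4$ by $\Pi$, together with the reducedness of $T_P$ established in the preceding discussion via Lemma~\ref{lemma:Pukhlikov}: an irreducible curve $L$ contained set-theoretically in the reduced curve $T_P$ is automatically one of its components.

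There is no real obstacle here; the lemma is essentially a restatement of the fact that lines on a smooth surface in $\mathbb{P}^3$ are tangent directions of the surface at every point they meet. Accordingly, the write-up should be just a few lines, appealing to the identification $T_PL=L$ and the inclusion $T_PL\subset T_PS_4$.
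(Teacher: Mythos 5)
Your proof is correct, but it takes a genuinely different route from the paper. The paper's argument is an intersection-theoretic one-liner: if a line $L\subset S_4$ through $P$ were not a component of $T_P$, then the intersection number $L\cdot T_P$ (equal to $L\cdot H=\deg L=1$) would have to be at least $\mathrm{mult}_P(L)\cdot\mathrm{mult}_P(T_P)\geqslant 2$, since $T_P$ is by construction singular at $P$; the contradiction $1\geqslant 2$ finishes it. Your argument instead goes through the tangent-space picture directly: a line is its own embedded tangent at every point, functoriality of tangent spaces under the inclusion $L\hookrightarrow S_4$ forces $L=T_PL\subset T_PS_4=\Pi$, and then $L\subset S_4\cap\Pi=T_P$. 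Both are sound. Your approach is more elementary (no intersection theory on the surface) and arguably more illuminating, as it identifies the real geometric content — lines on a hypersurface lie in the tangent plane at any of their points — whereas the paper's proof leans on the fact that $T_P$ is singular at $P$, which is established just before the lemma. One small point worth noting: the paper's proof does not need $T_P$ to be reduced, only that $\mathrm{mult}_P(T_P)\geqslant 2$; your closing remark invoking reducedness via Lemma~\ref{lemma:Pukhlikov} to pass from set-theoretic to divisorial containment is a legitimate but slightly heavier justification than strictly necessary, since set-theoretic containment of an irreducible curve in a plane curve already makes it a component of the reduction, which is all that is used downstream.
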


\begin{proof}
If $L$ is a line in $S_4$ that passes through $P$, then $L$ is an irreducible component of the curve $T_P$,
because otherwise we would have
$$
1=L\cdot C=\mathrm{mult}_{P}\Big(L\cdot T_P\Big)\geqslant\mathrm{mult}_{P}\big(T_P\big)\geqslant 2,
$$
which is absurd.
\end{proof}

Put $m=\mult_P(D)$. Then Lemma~\ref{lemma:Skoda} and \eqref{equation:lambda-3-4} imply
\begin{equation}
\label{equation:m-big}
m>\frac{1}{\lambda}>\frac{4}{3}.
\end{equation}

\begin{lemma}
\label{lemma:lines}
Let $L$ be a line in $S_4$ that passes through $P$. Then $L$ is contained in $\mathrm{Supp}(D)$.
\end{lemma}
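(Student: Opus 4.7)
The plan is a one-line intersection estimate. Assume for contradiction that $L\not\subset\mathrm{Supp}(D)$. Then the intersection cycle $L\cdot D$ on $S_4$ is a well-defined effective $\mathbb{Q}$-cycle supported on finitely many points. Globally, since $D\sim_{\mathbb{Q}} H$ and $L$ is a line in $\mathbb{P}^3$, we have
\[
L\cdot D = L\cdot H = \deg(L) = 1.
\]

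Locally at $P$, since $L$ is smooth at $P$ (being a line) and does not lie in $\mathrm{Supp}(D)$, writing $D=\sum a_iC_i$ we have the standard inequality
\[
\mathrm{mult}_{P}\big(L\cdot D\big)=\sum a_i\,\mathrm{mult}_{P}\big(L\cdot C_i\big)\geqslant \sum a_i\,\mathrm{mult}_P(C_i)\,\mathrm{mult}_P(L) = \mathrm{mult}_P(D)=m.
\]
Combining the global and local computations yields $1=L\cdot D\geqslant\mathrm{mult}_P(L\cdot D)\geqslant m$, which contradicts the bound $m>\tfrac{4}{3}$ from \eqref{equation:m-big}. Hence $L$ must be contained in $\mathrm{Supp}(D)$.

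There is no real obstacle here: the only thing to check is that $L\not\subset\mathrm{Supp}(D)$ makes the intersection proper at $P$, and this is precisely the setting in which the multiplicity inequality above applies. The existence of such a line $L$ (when any exists) is not in question; the lemma only asserts something about those lines through $P$ that happen to be contained in $S_4$, and it asserts nothing if there are none.
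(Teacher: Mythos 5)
Your argument is correct and is essentially identical to the paper's: both deduce $1=L\cdot D\geqslant \mathrm{mult}_P(L)\,\mathrm{mult}_P(D)=m$ from properness of the intersection, then contradict the bound $m>\tfrac{4}{3}$ of \eqref{equation:m-big}. No meaningful difference in route or content.
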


\begin{proof}
If $L$ is not contained in the support of $D$, then \eqref{equation:m-big} gives
$$
1=L\cdot H=L\cdot D\geqslant\mathrm{mult}_P(L)\mathrm{mult}_P(D)=m>\frac{1}{\lambda}>1,
$$
which is absurd.
\end{proof}

Let $f\colon\widetilde{S}_4\to S_4$ be a blow up of the surface $S$ at the point $P$.
Denote by $E$ the $f$-exceptional curve, and denote by $\widetilde{D}$ the proper transform of $D$ on the surface $\widetilde{S}_4$.
Then the log pair
\begin{equation}
\label{equation:log-pull-back}
\Big(\widetilde{S}_4,\lambda\widetilde{D}+\big(\lambda m-1\big)E\Big)
\end{equation}
is not log canonical at some point $Q\in E$ by Remark~\ref{remark:log-pull-back}.
Moreover, Lemma~\ref{lemma:log-pull-back} implies

\begin{corollary}
\label{corollary:log-pull-back}
Suppose that $m\leqslant\frac{2}{\lambda}$. Then the log pair \eqref{equation:log-pull-back} is log canonical at every point of the curve $E$ that is different from $Q$.
\end{corollary}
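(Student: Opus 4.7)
The plan is to deduce this directly from Lemma~\ref{lemma:log-pull-back}, applied to the effective $\mathbb{Q}$-divisor $\lambda D$ rather than to $D$ itself. The hypothesis $m \leqslant \frac{2}{\lambda}$ is precisely the assertion that $\mathrm{mult}_P(\lambda D) = \lambda m \leqslant 2$, which is what Lemma~\ref{lemma:log-pull-back} requires; and by our standing assumption the pair $(S_4, \lambda D)$ is not log canonical at $P$.

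First I would observe that the proper transform of $\lambda D$ under $f$ is $\lambda \widetilde{D}$, and that the log pull-back formula at $P$ gives
\[
K_{\widetilde{S}_4} + \lambda \widetilde{D} + \bigl(\lambda m - 1\bigr)E \sim_{\mathbb{Q}} f^{*}\bigl(K_{S_4} + \lambda D\bigr),
\]
so the pair appearing in the statement is exactly the log pull-back of $(S_4, \lambda D)$. Thus Lemma~\ref{lemma:log-pull-back} applies verbatim to $\lambda D$ and produces a unique point of $E$ at which this pull-back pair fails to be log canonical.

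Finally, the point $Q$ has already been identified (via Remark~\ref{remark:log-pull-back}) as a point on $E$ at which the pair $(\widetilde{S}_4, \lambda \widetilde{D} + (\lambda m - 1)E)$ is not log canonical. By the uniqueness half of Lemma~\ref{lemma:log-pull-back}, $Q$ must be the \emph{only} such point on $E$. In other words, the log pair is log canonical at every point of $E$ distinct from $Q$, which is exactly the content of the corollary.

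There is no real obstacle here: the corollary is a direct repackaging of Lemma~\ref{lemma:log-pull-back} after rescaling $D$ by $\lambda$, so the only thing to verify is that $\mathrm{mult}_P(\lambda D) \leqslant 2$ translates into the stated bound $m \leqslant \frac{2}{\lambda}$, which is immediate.
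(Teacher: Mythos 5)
Your proposal is correct and follows exactly the route the paper takes: the paper states that the corollary "follows from Lemma~\ref{lemma:log-pull-back}," and the intended argument is precisely yours — apply that lemma to the rescaled divisor $\lambda D$ (so that $\mathrm{mult}_P(\lambda D)=\lambda m\leqslant 2$), note that its log pull-back is the pair \eqref{equation:log-pull-back}, and use the uniqueness of the non-log-canonical point on $E$ together with the fact that $Q$ is already known to be such a point.
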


Put $\widetilde{m}=\mult_Q(\widetilde{D})$.
Applying Lemma~\ref{lemma:Skoda} to the log pair \eqref{equation:log-pull-back} at the point $Q$,
we obtain
\begin{equation}
\label{equation:m-m-big}
m+\widetilde{m}>\frac{2}{\lambda}>\frac{8}{3},
\end{equation}
because $\lambda<\frac{3}{4}$ by \eqref{equation:lambda-3-4}.

Let $g\colon\overline{S}_4\rightarrow \widetilde{S}_4$ be the blow up of the surface $\widetilde{S}_4$ at the point $Q$,
and let $F$ be the exceptional curve of $g$.
Denote by $\overline{E}$ and $\overline{D}$ the proper transforms of $E$ and $\widetilde{D}$, respectively.
By Remark~\ref{remark:log-pull-back}, the log pair
\begin{equation}
\label{equation:log-pull-back-2}
\Big(\overline{S}_4,\lambda\overline{D}+\big(\lambda m-1\big)\overline{E}+\big(\lambda m+\lambda\widetilde{m}-2\big)F\Big)
\end{equation}
is not log canonical at some point $O\in F$, because
$$
K_{\overline{S}_4}+\lambda\overline{D}+\big(\lambda m-1\big)\overline{E}+\big(\lambda m+\lambda\widetilde{m}-2\big)F\sim_{\mathbb{Q}} g^*\Big(K_{\widetilde{S}_4}+\lambda\widetilde{D}+\big(\lambda m-1\big)E\Big),
$$
and \eqref{equation:log-pull-back} is not log canonical at the point $Q$.
Applying Lemma~\ref{lemma:log-pull-back}, we obtain

\begin{corollary}
\label{corollary:log-pull-back-2}
Suppose that $m+\widetilde{m}\leqslant\frac{3}{\lambda}$.
Then the log pair \eqref{equation:log-pull-back-2} is log canonical at every point of $F$ that is different from $O$.
\end{corollary}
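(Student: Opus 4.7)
The plan is to recognize Corollary~\ref{corollary:log-pull-back-2} as a direct application of Lemma~\ref{lemma:log-pull-back} to the blow up $g\colon\overline{S}_4\to\widetilde{S}_4$, applied not to the original divisor $\widetilde{D}$ alone, but to the full effective $\mathbb{Q}$-divisor on $\widetilde{S}_4$ that already appeared at the previous step, namely
\[
B:=\lambda\widetilde{D}+\big(\lambda m-1\big)E.
\]
Note that $B$ is indeed effective, because $\lambda m>1$ by \eqref{equation:m-big} (combined with $m>1/\lambda$ from Lemma~\ref{lemma:Skoda}), and we already know from Remark~\ref{remark:log-pull-back} that $(\widetilde{S}_4,B)$ is not log canonical at $Q$.

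First I would compute $\mathrm{mult}_Q(B)$. Since $Q\in E$ and $E$ is smooth at $Q$, one has $\mathrm{mult}_Q(E)=1$, so
\[
\mathrm{mult}_Q(B)=\lambda\widetilde{m}+\big(\lambda m-1\big)=\lambda\big(m+\widetilde{m}\big)-1.
\]
The hypothesis $m+\widetilde{m}\leqslant\tfrac{3}{\lambda}$ therefore translates exactly into $\mathrm{mult}_Q(B)\leqslant 2$, which is precisely the multiplicity hypothesis needed to invoke Lemma~\ref{lemma:log-pull-back}.

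Next I would identify the resulting log pullback on $\overline{S}_4$ with the pair \eqref{equation:log-pull-back-2}. The proper transform of $B$ under $g$ is $\lambda\overline{D}+(\lambda m-1)\overline{E}$, and adding $(\mathrm{mult}_Q(B)-1)F=(\lambda m+\lambda\widetilde{m}-2)F$ as prescribed by Lemma~\ref{lemma:log-pull-back}, we recover exactly the boundary in \eqref{equation:log-pull-back-2}. Lemma~\ref{lemma:log-pull-back} then asserts that there is a \emph{unique} point on $F$ at which this pair fails to be log canonical; since we already know it fails at $O$, the failure point must be $O$, and the pair is log canonical at every other point of $F$.

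The only thing to watch is that Lemma~\ref{lemma:log-pull-back} is stated for an effective $\mathbb{Q}$-divisor on a smooth surface, with no requirement that its coefficients be $\leqslant 1$; the proof via Theorem~\ref{theorem:adjunction} only uses the multiplicity bound at the blown-up point. Thus there is no obstruction to feeding in the boundary $B$ with the (possibly large) coefficient $\lambda m-1$ on $E$, and no genuine obstacle arises — the content of the corollary is essentially the bookkeeping identification $\mathrm{mult}_Q(B)\leqslant 2\iff m+\widetilde{m}\leqslant 3/\lambda$.
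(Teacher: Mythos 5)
Your proposal is correct and follows exactly the paper's (tacit) argument: the paper derives the corollary simply by "Applying Lemma~\ref{lemma:log-pull-back}," and your write-up spells out precisely the bookkeeping — identifying $B=\lambda\widetilde{D}+(\lambda m-1)E$, computing $\mathrm{mult}_Q(B)=\lambda(m+\widetilde{m})-1\leqslant 2$, and matching the log pullback with \eqref{equation:log-pull-back-2} — that makes that one-line citation work.
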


Put $\overline{m}=\mult_O(\overline{D})$.
Applying Lemma~\ref{lemma:Skoda} to the log pair \eqref{equation:log-pull-back-2} at the point $O$, we get
\begin{equation}
\label{equation:m-m-m-big}
m+\widetilde{m}+\overline{m}>\frac{3}{\lambda}>4,
\end{equation}
because $\lambda<\frac{3}{4}$ by \eqref{equation:lambda-3-4}.

Denote by $\widetilde{T}_P$ the proper transform of the singular quartic curve $T_P$ on the surface $\widetilde{S}_4$.
We have the following diagram:
\begin{center}
$\xymatrixcolsep{0.9pc}\xymatrixrowsep{1.6pc}
\xymatrix{
F\ar@{}[r]|-*[@]{\subset}\ar@{|->}[d]&\overline{S}\ar^g[d]&\\
Q\ar@{}[r]|-*[@]{\in}&\widetilde{S}\ar^f[d]&E\ar@{|->}[d]\ar@{}[l]|-*[@]{\subset}\\
&S&P\ar@{}[l]|-*[@]{\in}
}$
\end{center}
For the point $Q$, we have two mutually excluding possibilities: $Q\in\widetilde{T}_P$ and $Q\not\in\widetilde{T}_P$.
If $Q\in\widetilde{T}_P$, we can use geometry of the curve $T_P$ to derive a contradiction.
If $Q\not\in\widetilde{T}_P$, then we often can obtain a contradiction using the following two lemmas.

\begin{lemma}
\label{lemma:point-not-on-exceptional}
Suppose that  $m\leqslant\frac{2}{\lambda}$, $m+\widetilde{m}\leqslant\frac{3}{\lambda}$ and $Q\not\in\widetilde{T}_P$.
Then $O=\overline{E}\cap F$.
\end{lemma}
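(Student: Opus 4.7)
The plan is to argue by contradiction: assume $O\ne\overline{E}\cap F$. Since Corollary~\ref{corollary:log-pull-back-2} guarantees that $O$ is the unique non log canonical point on $F$ (thanks to the hypothesis $m+\widetilde{m}\leqslant 3/\lambda$), this assumption forces $O\in F\setminus\overline{E}$.

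The first step is to apply Theorem~\ref{theorem:adjunction} (inversion of adjunction) to the pair \eqref{equation:log-pull-back-2} at $O$ with $C_1=F$ and $\Delta=\lambda\overline{D}+(\lambda m-1)\overline{E}$. The three hypotheses hold: $F\cong\PP^1$ is smooth at $O$; the coefficient $\lambda m+\lambda\widetilde{m}-2$ of $F$ is at most $1$ by the assumption $m+\widetilde{m}\leqslant 3/\lambda$; and the pair is not log canonical at $O$ by construction. Since $O\notin\overline{E}$, the $\overline{E}$-term contributes nothing to $F\cdot\Delta$ at $O$, so Theorem~\ref{theorem:adjunction} yields $\lambda\,\mult_O(F\cdot\overline{D})>1$. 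Combining this with the elementary bound $\mult_O(F\cdot\overline{D})\leqslant\overline{D}\cdot F=\widetilde{m}$ gives the lower bound $\widetilde{m}>1/\lambda>4/3$.

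The second step is to derive a contradicting upper bound on $\widetilde{m}$ from the hypothesis $Q\notin\widetilde{T}_P$. Since $S_4$ is smooth at $P$ and $T_P$ is cut on $S_4$ by the tangent plane at $P$, one has $\mult_P T_P=2$, whence $\widetilde{T}_P\cdot E=2$; the assumption $Q\notin\widetilde{T}_P$ then says that $Q$ is distinct from the (at most) two tangent directions of $T_P$ at $P$. In particular, by Lemmas~\ref{lemma:curve-C} and~\ref{lemma:lines}, no line on $S_4$ through $P$ can contribute to $\widetilde{m}$. To convert this into a numerical bound, I would invoke Pukhlikov's Lemma~\ref{lemma:Pukhlikov} to know that every coefficient of $D$ is at most $1$, and then use Remark~\ref{remark:convexity} with $T_P$ in the role of $R$: after replacing $D$ by the extremal effective combination $(1+\epsilon_0)D-\epsilon_0 T_P$ and iterating, one can assume that $\mathrm{Supp}(D)$ contains no component of $T_P$, so that the global intersection $\widetilde{D}\cdot\widetilde{T}_P=4-2m$ is a proper, non-negative intersection entirely supported away from $Q$. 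Combined with the constraint $m\leqslant 2/\lambda$, this should squeeze $\widetilde{m}$ below $1/\lambda$, contradicting the first step.

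The main obstacle is the last part of the second step: translating the condition $Q\notin\widetilde{T}_P$ into a sharp upper bound $\widetilde{m}\leqslant 1/\lambda$. The subtle point is that the convexity manipulation with $T_P$ works cleanly only when $(S_4,\lambda T_P)$ itself is log canonical at $P$, which may fail if $T_P$ carries a singularity worse than $A_3$ at $P$; handling this will likely require case analysis on the possible singularity types of the plane quartic $T_P$, exploiting the strict inequality $\lambda<3/4$ and the geometry provided by Lemma~\ref{lemma:curve-C}. Once that bound is in place, it contradicts $\widetilde{m}>1/\lambda$, forcing $O\in\overline{E}$ and therefore $O=\overline{E}\cap F$.
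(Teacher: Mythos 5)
Your Step~1 is correct: assuming $O\ne\overline{E}\cap F$ and applying Theorem~\ref{theorem:adjunction} to the pair~\eqref{equation:log-pull-back-2} with $C_1=F$ (its coefficient $\lambda m+\lambda\widetilde m-2\leqslant 1$ by hypothesis, and $O\notin\overline{E}$ kills the $\overline{E}$-term) does yield $\lambda\widetilde m\geqslant\lambda\,\mult_O(F\cdot\overline{D})>1$, hence $\widetilde m>1/\lambda$. But notice that this step uses only the first two hypotheses, not $Q\notin\widetilde{T}_P$, and it produces no contradiction by itself: the a priori bounds $\widetilde m\leqslant m\leqslant 2/\lambda$ and $m+\widetilde m>2/\lambda$ are perfectly compatible with $\widetilde m>1/\lambda$.

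Step~2 is where the genuine gap lies, and you have correctly identified it yourself but then offered a plan that cannot close it. The hypothesis $Q\notin\widetilde{T}_P$ says precisely that the curve $T_P$ is \emph{irrelevant} at $Q$: since $\widetilde{T}_P$ misses $Q$, the intersection $\widetilde{D}\cdot\widetilde{T}_P=4-2m$ is supported away from $Q$ and carries no information whatsoever about $\widetilde m=\mult_Q(\widetilde{D})$. There is no local intersection number to estimate. Moreover, the proposed ``iterating'' of Remark~\ref{remark:convexity} with $R=T_P$ to purge $\Supp(D)$ of \emph{every} component of $T_P$ is unjustified: a single application removes at most one component, and after replacing $D$ by $D_{\epsilon_0}$ the remark does not re-apply with the same $R$, since $D_{\epsilon_0}$ need not still dominate a positive multiple of the remaining components of $T_P$. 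Finally, the worry about the singularity type of $T_P$ at $P$ is a red herring here (that issue is handled later, in Lemma~\ref{lemma:assumption-support}, where $\lambda<\alpha_1(S_4,H)\leqslant\lct_P(S_4,T_P)$ is exactly what guarantees $(S_4,\lambda T_P)$ is log canonical at $P$); the real problem is that $T_P$ simply does not see $Q$.

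The paper's argument is built around a \emph{different} auxiliary curve, and the hypothesis $Q\notin\widetilde{T}_P$ enters in a completely different way. Assuming $O\ne\overline{E}\cap F$, the linear system $|(f\circ g)^*(H)-2F-\overline{E}|$ (hyperplane sections of $S_4$ singular at $P$ in the direction $Q$) is a free pencil, so it has a unique member $\overline{M}$ through $O$, with image $M\in|H|$ on $S_4$. The role of $Q\notin\widetilde{T}_P$ is only to force $M\ne T_P$, hence $M$ is \emph{smooth} at $P$ (so $\overline{M}$ really is the proper transform of $M$), and, via Lemma~\ref{lemma:curve-C}, to rule out the possibility that the component $M^\prime$ of $M$ through $O$ is a line. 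One then applies Remark~\ref{remark:convexity} \emph{once} with $R=M$ to remove a component $M_\star$ of $M$ from $\Supp(D)$, distinguishes the cases $M_\star=M^\prime$ (killed by $\overline{m}\leqslant\overline{M}^\prime\cdot\overline{D}=\deg(M^\prime)-m-\widetilde m\leqslant 4-m-\widetilde m$ versus \eqref{equation:m-m-m-big}) and $M_\star\ne M^\prime$, and in the latter case applies Theorem~\ref{theorem:adjunction} to $\overline{M}^\prime$ at $O$, combining the resulting inequality with the bound $a\leqslant 1/\deg(M^\prime)$ from $D\cdot M_\star=1\cdot\deg(M_\star)$ and the self-intersection $(M^\prime)^2=\deg(M^\prime)^2-4\deg(M^\prime)$ on $S_4$, to reach a contradiction for both $\deg(M^\prime)=2$ and $\deg(M^\prime)=3$. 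You would need to replace your Step~2 with this construction (or an equivalent one that produces a curve actually passing through $O$) for the argument to go through.
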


\begin{proof}
Suppose $O\ne\overline{E}\cap F$.
Then the linear system $|(f\circ g)^*(H)-2F-\overline{E}|$ is a free pencil.
Thus, it contains a unique curve that passes through the point $O$.
Denote this curve by $\overline{M}$, and denote its proper transform on $S_4$ by $M$.
Then $M$ is a hyperplane section of the surface $S_4$ and $P\in M$.
In particular, $M$ is reduced by Lemma~\ref{lemma:Pukhlikov}.
Since $Q\not\in\widetilde{T}_P$, we have $M\ne T_P$, so that $M$ is smooth at $P$.
Thus, $\overline{M}$ is the proper transform of the curve $M$ on the surface $\overline{S}_4$.

Since $M$ is smooth at $P$, the log pair$(S_4,\lambda M)$ is log canonical at $P$.
Thus, it follows from Remark~\ref{remark:convexity} that there exists an effective $\Q$-divisor $D^\prime$ on the surface $S_4$
such that $D^\prime\sim_{\mathbb{Q}} H$,
the log pair $(S_4,\lambda D^\prime)$ is not log canonical at $P$,
the support of the divisor $D^\prime$ is contained in the support of the divisor $D$
and does not contain at least one irreducible component of the curve $M$.
Replacing $D$ by $D^\prime$, we may assume that $D$ enjoys all these properties.

Denote by $M_\star$ the irreducible component of the curve $M$ that is not contained in the support of $D$.
Similarly, denote by $\overline{M}^\prime$ the irreducible component of the curve $\overline{M}$ that contain $O$,
and denote its image on $S_4$ by $M^\prime$. If $M_\star=M^\prime$, then
$$
\overline{m}\leqslant\overline{M}^\prime\cdot\overline{D}=\mathrm{deg}\big(M^\prime\big)-m-\widetilde{m}\leqslant 4-m-\widetilde{m},
$$
which contradicts  \eqref{equation:m-m-m-big}. Thus, we see that $M_\star\ne M^\prime$.
In particular, the curve $M$ is not irreducible.

Since $M$ is smooth at $P$ and $P\in M^\prime$, then $P\not\in M_\star$.
By Lemma~\ref{lemma:curve-C}, the curve $M^\prime$ is not a line, because $Q\not\in\widetilde{T}_P$ by assumption.
Hence, either $M^\prime$ is a conic or $M^\prime$ is a cubic curve.
Therefore, we may have  the following cases:

\begin{minipage}{0.24\textwidth}
\begin{center}
\begin{tikzpicture}[scale=0.7]
\draw (0.57,1.03) to [out=0, in=90] (2,0.3) to [out=-90, in=0](0.57,-0.5) to [out=180, in=-90] (-0.86,0.3) to [out=90, in=180] (0.57,1.03);
\draw (1.57,1.03) to [out=0, in=90] (3,0.3) to [out=-90, in=0](1.57,-0.5) to [out=180, in=-90] (0.14,0.3) to [out=90, in=180] (1.57,1.03);
\node at (0.45,1.4) {$P$};
\draw [fill] (0.42,1.03) circle [radius=0.06];
\node at (-0.8,1.1) {$M^\prime$};
\node at (3,1.1) {$M_\star$};
\node  [align=flush center,text width=4cm] at (1.2,-1) {\footnotesize $M^\prime$ and $M_\star$ are conics};
\end{tikzpicture}
\end{center}
\end{minipage}
\hfill
\begin{minipage}{0.33\textwidth}
\begin{center}
\begin{tikzpicture}[scale=0.7]
\draw (-1.17,0.4) --(2.5,0);
\draw (-1.17,0) --(2.5,0.6);
\draw (0.57,1.03) to [out=0, in=90] (2,0.3) to [out=-90, in=0](0.57,-0.5) to [out=180, in=-90] (-0.86,0.3) to [out=90, in=180] (0.57,1.03);
\node at (0.45,1.4) {$P$};
\draw [fill] (0.42,1.03) circle [radius=0.06];
\node at (-0.6,1.2) {$M'$};
\node at (2.6,0.9) {$M_\star$};
\node  [align=flush center,text width=5.5cm] at (0.7,-1) {\footnotesize $M^\prime$ is a conic, and $M_\star$ is a line};
\end{tikzpicture}
\end{center}
\end{minipage}
\hfill
\begin{minipage}{0.33\textwidth}
\begin{center}
\begin{tikzpicture}[scale=0.7]
\draw  (0.5,1.5) to [out=-100,in=120] (0.6,0.6) to [out=-60,in=-90]   (3.2,0.6) to [out=90,in=90] (2,0.6)  to [out=-90,in=100] (3,-0.7);
\node at (1.3,0.4) {$P$};
\draw (1.2,-0.5) --(3.8,1.3);
\draw [fill] (1.3,0.06) circle [radius=0.06];
\node at (1,1.2) {$M'$};
\node at (4,0.9) {$M_\star$};
\node  [align=flush center,text width=5cm] at (2.2,-1.1) {\footnotesize $M^\prime$ is a cubic, and $M_\star$ is a line};
\end{tikzpicture}
\end{center}
\end{minipage}

Put $D=aM^\prime+\Delta$, where $a$ is a non-negative rational number,
and $\Delta$ is an effective $\Q$-divisor whose support does not contain $M^\prime$.
Then $a\leqslant 1$ by Lemma~\ref{lemma:Pukhlikov}.
In fact, we can say more. Indeed, we have
$$
\mathrm{deg}\big(M_\star\big)=H\cdot M_\star=D\cdot M_\star=aM^\prime\cdot M_\star+\Delta\cdot M_\star\geqslant aM^\prime\cdot M_\star.
$$
Since $M^\prime\cdot M_\star=\mathrm{deg}(M^\prime)\mathrm{deg}(M_\star)$ on the surface $S_4$, we have
\begin{equation}
\label{equation:a-inequality}
a\leqslant\frac{\mathrm{deg}\big(M_\star\big)}{\mathrm{deg}\big(M^\prime\big)\mathrm{deg}\big(M_\star\big)}.
\end{equation}

Denote by $\widetilde{\Delta}$ the proper transform of the divisor $\Delta$ on the surface $\widetilde{S}_4$.
Put $n=\mult_P(\Delta)$ and $\widetilde{n}=\mult_Q(\widetilde{\Delta})$.
Since $O\ne\overline{E}\cap F$ and \eqref{equation:log-pull-back-2} is not log canonical at the point $O$, the log pair
$$
\Big(\overline{S}_4,\lambda a\overline{M}^\prime+\lambda\overline{\Delta}+\big(\lambda n+\lambda\widetilde{n}+2\lambda a-2\big)F\Big)
$$
is also not log canonical at the point the point $O$. Applying Theorem~\ref{theorem:adjunction} to this log pair, we obtain
$$
\overline{M}^\prime\cdot\overline{\Delta}+\big(\lambda n+\lambda\widetilde{n}+2\lambda a-2\big)=\overline{M}^\prime\cdot\Big(\lambda\overline{\Delta}+\big(\lambda n+\lambda\widetilde{n}+2\lambda a-2\big)F\Big)>1.
$$
This gives $\overline{M}^\prime\cdot\overline{\Delta}+n+\widetilde{n}+2a>\frac{3}{\lambda}$.
On the other hand, we have
$$
\overline{M}^\prime\cdot\overline{\Delta}=M^\prime\cdot\Delta-n-\widetilde{n}=M^\prime\cdot\big(H-aM^\prime)-n-\widetilde{n}=\mathrm{deg}\big(M^\prime\big)-a(M^\prime)^2-n-\widetilde{n}.
$$
Therefore, we obtain
$$
\mathrm{deg}\big(M^\prime\big)-a(M^\prime)^2>\frac{3}{\lambda}-2a>4-2a,
$$
because $\lambda>\frac{3}{4}$ by \eqref{equation:lambda-3-4}. Thus, we have
\begin{equation}
\label{equation:ugly}
a\Big(2-(M^\prime)^2\Big)>4-\mathrm{deg}\big(M^\prime\big).
\end{equation}

If $M^\prime$ is a conic, then $(M^\prime)^2=-2$, so that that $a>\frac{1}{2}$ by \eqref{equation:ugly},
which is impossible, because $a\leqslant\frac{1}{2}$ by \eqref{equation:a-inequality}.
Thus, $M^\prime$ is a plane cubic curve. Then $(M^\prime)^2=0$.
Now \eqref{equation:ugly} gives $a>\frac{1}{2}$, which is impossible, since $a\leqslant\frac{1}{3}$ by \eqref{equation:a-inequality}.
\end{proof}

\begin{lemma}
\label{lemma:O-E-F}
If $m\leqslant 2$, then $m\leqslant\frac{2}{\lambda}$, $m+\widetilde{m}\leqslant\frac{3}{\lambda}$ and $O\ne\overline{E}\cap F$.
\end{lemma}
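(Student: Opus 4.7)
The plan is to dispose of the first two inequalities using elementary bounds coming from $m\leqslant 2$ and \eqref{equation:lambda-3-4}, and then derive $O\neq \overline{E}\cap F$ by running Theorem~\ref{theorem:adjunction} on the log pair \eqref{equation:log-pull-back-2} with $\overline{E}$ as the smooth curve.

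For the first inequality, since $\lambda<\tfrac{3}{4}<1$, we have $2/\lambda>\tfrac{8}{3}>2\geqslant m$, so $m\leqslant 2/\lambda$ is automatic. For the second, I would note that $Q\in E$ and $\widetilde{D}$ does not contain $E$ as a component, so the intersection $\widetilde{D}\cdot E$ is proper, equal to $m$ (since $E^2=-1$ and $\widetilde{D}=f^\ast D-mE$). Therefore
\[
\widetilde{m}=\mathrm{mult}_Q(\widetilde{D})\leqslant \widetilde{D}\cdot E=m,
\]
giving $m+\widetilde{m}\leqslant 2m\leqslant 4<3/\lambda$, again using \eqref{equation:lambda-3-4}.

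For the third claim, suppose to the contrary that $O=\overline{E}\cap F$. Having verified the first two inequalities, Corollary~\ref{corollary:log-pull-back-2} applies, but more importantly the coefficient $\lambda m -1$ of $\overline{E}$ in \eqref{equation:log-pull-back-2} satisfies $\lambda m -1\leqslant 1$. The curve $\overline{E}$ is smooth at $O$, so I would apply Theorem~\ref{theorem:adjunction} to the log pair \eqref{equation:log-pull-back-2} with $C_1=\overline{E}$ to obtain
\[
\mathrm{mult}_{O}\Bigl(\overline{E}\cdot\bigl(\lambda\overline{D}+(\lambda m+\lambda\widetilde{m}-2)F\bigr)\Bigr)>1.
\]
Since $\overline{E}\cdot F=1$ with intersection concentrated at $O$, this simplifies to $\lambda\,\mathrm{mult}_O(\overline{E}\cdot\overline{D})>3-\lambda m-\lambda\widetilde{m}$, so
\[
\mathrm{mult}_{O}\bigl(\overline{E}\cdot\overline{D}\bigr)>\frac{3}{\lambda}-m-\widetilde{m}.
\]

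The key computation is then $\overline{E}\cdot\overline{D}=m-\widetilde{m}$: writing $\overline{E}=g^{\ast}E-F$ and $\overline{D}=g^{\ast}\widetilde{D}-\widetilde{m}F$, an expansion using $F^2=-1$, $g^\ast E\cdot F=0$, and $E\cdot\widetilde{D}=m$ yields this value. Combined with the previous inequality and the bound $\mathrm{mult}_O(\overline{E}\cdot\overline{D})\leqslant \overline{E}\cdot\overline{D}$ (valid since $\overline{E}$ is not a component of $\overline{D}$), we get $m-\widetilde{m}>\tfrac{3}{\lambda}-m-\widetilde{m}$, i.e.\ $m>\tfrac{3}{2\lambda}>2$ by \eqref{equation:lambda-3-4}, contradicting $m\leqslant 2$. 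No step looks genuinely hard; the only spot where care is needed is the intersection number $\overline{E}\cdot\overline{D}=m-\widetilde{m}$, but this is a routine blow-up computation.
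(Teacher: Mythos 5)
Your proof is correct and follows essentially the same route as the paper: the first two inequalities are the same elementary bounds, and for $O\neq\overline{E}\cap F$ both arguments apply Theorem~\ref{theorem:adjunction} to the log pair \eqref{equation:log-pull-back-2} with $\overline{E}$ as the smooth curve and use $\overline{D}\cdot\overline{E}=m-\widetilde{m}$, $F\cdot\overline{E}=1$ to force $\lambda m>\tfrac{3}{2}$, contradicting $m\leqslant 2$ and $\lambda<\tfrac{3}{4}$. The only difference is presentational: you spell out the passage from the local multiplicity $\mathrm{mult}_O$ to the global intersection number, which the paper leaves implicit.
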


\begin{proof}
Suppose $m\leqslant 2$.
Then $m\leqslant\frac{2}{\lambda}$, because $\lambda<\frac{3}{4}$ by \eqref{equation:lambda-3-4}.
Similarly, we see that $m+\widetilde{m}\leqslant\frac{3}{\lambda}$, because $\widetilde{m}\leqslant m$.
If $O=\overline{E}\cap F$, then
$$
\Big(\lambda\overline{D}+\big(\lambda m+\lambda\widetilde{m}-2\big)F\Big)\cdot\overline{E}>1
$$
by Theorem~\ref{theorem:adjunction}. On the other hand, we have
$$
\overline{D}\cdot\overline{E}=m-\widetilde{m}
$$
and $F\cdot\overline{E}=1$.
Hence, if $O\ne\overline{E}\cap F$, then
$2\lambda\geqslant\lambda m>\frac{3}{2}$,
which contradicts \eqref{equation:lambda-3-4}.
\end{proof}

Recall that $T_P$ is a reduced plane quartic curve that is singular at the point $P$.
This implies that there are twelve possibilities for the curve $T_P$ as follows.
\begin{enumerate}[(A)]
\item\label{four-lines}  $\mult_P(T_P)=4$, hence $T_P$ consists of four lines that intersect at $P$.
\item $\mult_P(T_P)=3$ and $T_P$
\begin{enumerate}[({B}1)]
\item\label{three-lines}  consists of four lines and three of them intersect at $P$, or
\item\label{quartic-m3} it is an irreducible quartic with a singular point $P$ of multiplicity $3$, or
\item\label{conic+2lines} it consists of a conic and  two lines, all intersecting at $P$, or
\item\label{cubic+line} it consists of a cubic curve with a singular point $P$ of multiplicity $2$ and a line passing through~$P$.
\end{enumerate}
\item $\mult_P(T_P)=2$ and $T_P$
\begin{enumerate}[({C}1)]
\item\label{2lines} consists of four lines, two of which pass through $P$, or
\item\label{2lines-conic} it consist of a conic and two lines, and the two lines intersect at $P$ and $P$ does not lie on the conic, or
\item\label{line+conic} it consist of a conic and two lines and $P$ is the intersection point of the conic with one of the lines, or
\item\label{cubic} it consists of a cubic curve and a line and $P$ is the intersection of the two at a smooth point of the cubic curve, or
\item\label{smooth-cubic+line} it consists of a cubic curve and a line and $P$ is singular point of the cubic curve with multiplicity $2$ and does not lie on the line, or
\item\label{2conics} it consists of two conics and they intersect at $P$, or
\item\label{quartic-m2} it is an irreducible quartic curve with a singular point $P$ of multiplicity $2$.
\end{enumerate}
\end{enumerate}

In the rest of this section, we  eliminate all these possibilities case by case using Lemmas~\ref{lemma:point-not-on-exceptional} and \ref{lemma:O-E-F}.
To succeed in doing this, we also need

\begin{lemma}
\label{lemma:assumption-support}
We may assume that the support of the divisor $D$ does not contain at least one irreducible component of the plane quartic curve $T_P$.
\end{lemma}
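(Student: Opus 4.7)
The plan is to use the convexity trick of Remark~\ref{remark:convexity} with the reference divisor $R = T_P$. Since $T_P \in |H|$ we have $T_P \sim_{\mathbb{Q}} D$, so $D_\epsilon := (1+\epsilon)D - \epsilon T_P$ stays $\mathbb{Q}$-linearly equivalent to $H$ for every $\epsilon \ge 0$. The crucial observation I would exploit is that $\lambda < \alpha_1(S_4,H)$ together with $T_P \in |H|$ forces, directly from the definition of $\alpha_1$, the pair $(S_4, \lambda T_P)$ to be log canonical everywhere on $S_4$, and in particular at $P$.

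Assuming (for contradiction) that every irreducible component of $T_P$ lies in $\mathrm{Supp}(D)$, I would first note that $D \ne T_P$: otherwise $(S_4, \lambda D) = (S_4, \lambda T_P)$ would be log canonical at $P$, contrary to our standing assumption. Writing $D = \sum_{i=1}^k a_i C_i + \Delta$ with $C_1, \dots, C_k$ the components of $T_P$ (each $a_i > 0$ by hypothesis) and $\Delta$ supported away from $T_P$, the coefficient of $C_i$ in $D_\epsilon$ equals $a_i - \epsilon(1-a_i)$. If all $a_i$ were equal to $1$, then $\Delta = D - T_P$ would be an effective $\mathbb{Q}$-divisor $\mathbb{Q}$-linearly equivalent to zero on the projective surface $S_4$, hence identically zero, forcing $D = T_P$, which has just been ruled out. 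Consequently some $a_i < 1$, the largest $\epsilon_0$ for which $D_{\epsilon_0}$ remains effective is a finite positive number, and at least one $C_i$ drops out of $\mathrm{Supp}(D_{\epsilon_0})$.

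Finally, applying Remark~\ref{remark:convexity} to the $\mathbb{Q}$-equivalent and unequal divisors $\lambda D$ and $\lambda T_P$, the log canonicity of $(S_4, \lambda T_P)$ at $P$ together with the failure of log canonicity of $(S_4, \lambda D)$ at $P$ forces $(S_4, \lambda D_{\epsilon_0})$ to be non-log-canonical at $P$. Replacing $D$ by $D_{\epsilon_0}$ preserves $D \sim_{\mathbb{Q}} H$ and the failure of log canonicity at $P$, while ensuring that at least one irreducible component of $T_P$ is missing from $\mathrm{Supp}(D)$, as required. There is no substantive obstacle here; the only point requiring thought is the observation that $\lambda < \alpha_1$ secures log canonicity of the reference divisor $T_P$, after which the rest is the standard convex-combination bookkeeping.
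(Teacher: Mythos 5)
Your proof is correct and follows essentially the same route as the paper: apply Remark~\ref{remark:convexity} with $R=T_P$, noting that $\lambda<\alpha_1(S_4,H)$ guarantees $(S_4,\lambda T_P)$ is log canonical at $P$, then replace $D$ by $D_{\epsilon_0}$. The extra bookkeeping you include (ruling out $D=T_P$, confirming finiteness of $\epsilon_0$, and tracking which component drops) is all correct and merely spells out details that the paper delegates to the remark.
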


\begin{proof}
Note that $(S_4,\lambda T_P)$ is log canonical at $P$, because $\lambda<\alpha_1(S_4,H)$.
Thus, it follows from Remark~\ref{remark:convexity} that there exists an effective $\Q$-divisor $D^\prime$ on the surface $S_4$
such that $D^\prime\sim_{\mathbb{Q}} H$, the log pair $(S_4,\lambda D^\prime)$ is not log canonical at $P$,
and the support of $D^\prime$ does not contain at least one irreducible component of the curve $T_P$.
Replacing $D$ by $D^\prime$, we obtain the required assertion.
\end{proof}

We denote by $C_\star$ the irreducible component of the curve $T_P$ that is not  contained in the support of the divisor $D$.
By Lemma~\ref{lemma:lines}, if $P\in C_\star$, then $C_\star$ is not a line. This gives

\begin{corollary}
\label{corollary:A}
The case~\eqref{four-lines} is impossible.
\end{corollary}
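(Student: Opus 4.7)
The plan is very short. In case (A), by definition every irreducible component of the quartic curve $T_P$ is a line passing through the point $P$. By Lemma~\ref{lemma:assumption-support}, we are free to assume that at least one irreducible component of $T_P$ is not contained in the support of $D$, and we have denoted this component by $C_\star$. Thus $C_\star$ must be a line through $P$.

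On the other hand, the remark immediately preceding the corollary (which is an application of Lemma~\ref{lemma:lines}) asserts that whenever $P\in C_\star$, the curve $C_\star$ cannot be a line, since every line in $S_4$ passing through $P$ is forced to lie in $\mathrm{Supp}(D)$. This is the desired contradiction, so case~\eqref{four-lines} cannot occur.

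There is no obstacle here: the argument is a two-line consequence of Lemmas~\ref{lemma:lines} and~\ref{lemma:assumption-support}, together with the observation that in case~\eqref{four-lines} every irreducible component of $T_P$ contains $P$ and is a line.
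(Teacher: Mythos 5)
Your proof is correct and is exactly the paper's argument: in case (A) every component of $T_P$ is a line through $P$, so the component $C_\star$ guaranteed by Lemma~\ref{lemma:assumption-support} would be a line through $P$ not in $\mathrm{Supp}(D)$, contradicting Lemma~\ref{lemma:lines}.
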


Now we are going to deal with the cases \eqref{three-lines}, \eqref{quartic-m3}, \eqref{conic+2lines}, and \eqref{cubic+line}.
In these four cases, $\lambda<\frac{2}{3}$.
Indeed, one has $\mathrm{lct}_P(S_4,T_P)\leqslant\frac{2}{\mathrm{mult}_P(T_P)}$ by \eqref{equation:lct}.
Thus, we have
\begin{equation}
\label{equation:lambda-2-3}
\lambda<\frac{2}{\mathrm{mult}_P(T_P)},
\end{equation}
because $\lambda<\alpha_1(S_4,H)\leqslant\mathrm{lct}_P(S_4,T_P)$.

\begin{lemma}
\label{lemma:B1}
The case~\eqref{three-lines} is impossible.
\end{lemma}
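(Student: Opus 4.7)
The plan is to exploit the rigid structure of case~\eqref{three-lines}: here $T_P=L_1+L_2+L_3+L_4$, where $L_1,L_2,L_3$ are distinct lines through $P$ and $L_4$ is a fourth line not passing through $P$. By Lemma~\ref{lemma:lines}, each of $L_1,L_2,L_3$ must be contained in $\mathrm{Supp}(D)$, so Lemma~\ref{lemma:assumption-support} forces $C_\star=L_4$. Write
$$
D=a_1L_1+a_2L_2+a_3L_3+\Omega,
$$
with none of $L_1,\dots,L_4$ contained in $\mathrm{Supp}(\Omega)$ and $a_i\in[0,1]$ by Lemma~\ref{lemma:Pukhlikov}; set $n=\mathrm{mult}_P(\Omega)$.

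First I would extract two numerical constraints from intersection theory on $S_4$. Since the four lines lie in the tangent hyperplane at $P$ and $S_4$ is smooth, one has $L_i\cdot L_j=1$ for $i\ne j$ and $L_i^2=-2$. Intersecting $D\sim_{\mathbb{Q}}H$ with $L_4$ gives $\Omega\cdot L_4=1-(a_1+a_2+a_3)\geqslant 0$, so
$$
a_1+a_2+a_3\leqslant 1.
$$
Intersecting with $L_i$ for $i\in\{1,2,3\}$ gives $\Omega\cdot L_i=1+3a_i-(a_1+a_2+a_3)$; since $L_i$ is smooth at $P$ and not a component of $\Omega$, one has $n\leqslant\Omega\cdot L_i$, and summing the three inequalities yields $n\leqslant 1$.

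Next, split into two cases according to whether some $k\in\{1,2,3\}$ satisfies $\lambda(n+a_k)\leqslant 1$. If so, take $\{i,j\}=\{1,2,3\}\setminus\{k\}$ and apply Theorem~\ref{theorem:Trento} at $P$ to the pair $(L_i,L_j)$ with residual divisor $\Delta=\lambda(a_kL_k+\Omega)$; the hypothesis $\mathrm{mult}_P(\Delta)\leqslant 1$ is exactly the assumption. Using
$$
\mathrm{mult}_P(L_i\cdot\Delta)\leqslant L_i\cdot\Delta=\lambda(1+2a_i-a_j)
$$
together with the symmetric statement for $L_j\cdot\Delta$, each of Trento's two alternatives simplifies (after cancelling the cross term $\lambda a_j$, resp.\ $\lambda a_i$, on both sides and using $\lambda<\tfrac{2}{3}$ from \eqref{equation:lambda-2-3}) to either $2a_i+a_j>2$ or $a_i+2a_j>2$. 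Combined with $a_i,a_j\leqslant 1$, either forces $a_i+a_j>1$, contradicting $a_1+a_2+a_3\leqslant 1$.

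If instead $\lambda(n+a_k)>1$ for every $k\in\{1,2,3\}$, then since $\min_k a_k\leqslant\tfrac{1}{3}$ (because $\sum_k a_k\leqslant 1$), we obtain
$$
n>\frac{1}{\lambda}-\frac{1}{3}>\frac{3}{2}-\frac{1}{3}=\frac{7}{6},
$$
contradicting $n\leqslant 1$. The main obstacle lies in the Trento step: one must correctly identify the residual divisor, carefully track the $\lambda$-scaling of coefficients in both the global intersection $L_i\cdot\Omega$ and the adjunction bound $2(1-\lambda a_j)$, and check that the hypothesis $\lambda a_i\leqslant 1$ is ensured by $\lambda<\tfrac{2}{3}$ and Pukhlikov; everything else is elementary arithmetic.
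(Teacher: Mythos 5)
Your argument is correct, but it follows a genuinely different path from the paper's. The paper's proof of this case does not apply Theorem~\ref{theorem:Trento} on $S_4$ at all: instead it first establishes $m\leqslant 2$ (exactly as you do, from $n\leqslant 1$ and $a_1+a_2+a_3\leqslant 1$), then invokes the two-blow-up machinery (Lemmas~\ref{lemma:point-not-on-exceptional} and~\ref{lemma:O-E-F}) to force the non-log-canonical centre $Q\in E$ to lie on one of $\widetilde{L}_1,\widetilde{L}_2,\widetilde{L}_3$, and finally applies Trento's inequality on the blow-up $\widetilde{S}_4$ to the pair of curves $\widetilde{L}_1$ and $E$. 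You bypass the blow-up altogether by applying Trento directly on $S_4$ to the pair $(L_i,L_j)$ of lines through $P$, with residual $\Delta=\lambda(a_kL_k+\Omega)$; the price is that the hypothesis $\mathrm{mult}_P(\Delta)\leqslant 1$ must be arranged, which you handle by the pigeonhole dichotomy on $\lambda(n+a_k)$: if it fails for every $k$ then some $a_k\leqslant\tfrac{1}{3}$ gives $n>\tfrac{7}{6}$, contradicting $n\leqslant 1$; otherwise Trento gives $2a_i+a_j>2$ or $a_i+2a_j>2$, contradicting $a_1+a_2+a_3\leqslant 1$ and Pukhlikov's bound $a_i\leqslant 1$. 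The arithmetic (using $L_i^2=-2$, $L_i\cdot L_j=1$ on the K3, the cancellation of $a_k$ in $L_i\cdot\Delta$, and $\lambda<\tfrac{2}{3}$) is correct. Your route is shorter and more self-contained for this particular configuration, since it never needs the auxiliary lemmas that locate $O$ on $\overline{E}\cap F$; the paper's approach has the advantage of uniformity, as the same blow-up scaffolding is reused across all twelve cases (B1)--(C7).
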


\begin{proof}
Suppose that we are in the case \eqref{three-lines}.
Then $\mult_P(T_P)=3$ and $T_P$ consists of four lines $L_1$, $L_2$, $L_3$, and $L_4$ such
that the first three intersect at $P$, and $L_4$ does not pass through $P$.
Thus, we have the following picture:
\begin{center}
\begin{tikzpicture}[scale=1.3]
\draw (0,0) --(1,3);
\draw (-0.5,3) --(2,0);
\draw (-1.2,1.2) --(2.5,0.6);
\draw (-1.3,0.52) --(2.2,2.8);
\node at (1,1.65) {$P$};
\node at (-0.6,2.5) {$L_1$};
\node at (1.3,2.8) {$L_2$};
\node at (2.5,2.5) {$L_3$};
\node at (3,0.8) {$L_4$};
\draw [fill] (0.57,1.73) circle [radius=0.06];
\end{tikzpicture}
\end{center}

By Lemma~\ref{lemma:lines}, the lines $L_1$, $L_2$, and $L_3$ are contained in the support of $D$, and $C_\star=L_4$.
Hence, we put $D=a_1L_1+a_2L_2+a_3L_3+\Omega$,
where $a_1$, $a_2$, and $a_3$ are positive rational numbers,
and $\Omega$ is an effective $\mathbb{Q}$-divisor whose support does not contain the lines $L_1$, $L_2$, $L_3$, and $L_4$.
Put $n=\mult_P(\Omega)$. Then $m=n+a_1+a_2+a_3$.

Denote by $\widetilde{\Omega}$ the proper transform of the divisor $\Omega$ on the surface $\widetilde{S}_4$.
Also denote the proper transforms of the lines  $L_1$, $L_2$, and $L_3$ on the surface $\widetilde{S}_4$ by $\widetilde{L}_1$, $\widetilde{L}_2$, and $\widetilde{L}_3$, respectively.
Then we can rewrite the log pair \eqref{equation:log-pull-back-2} as
$$
\Big(\widetilde{S}_4, \lambda a_1\widetilde{L}_1+\lambda a_2\widetilde{L}_2+\lambda a_3\widetilde{L}_3+\lambda\widetilde{\Omega}+\big(\lambda(n+a_1+a_2+a_3)-1\big)E\Big).
$$

On the surface $S_4$, one has $L_1^2=-2$. Thus, we have
$$
1=D\cdot L_1=\Big(a_1L_1+a_2L_2+a_3L_3+\Omega\Big)\cdot L_4=-2a_1+a_2+a_3+\Omega\cdot L_1\geqslant -2a_1+a_2+a_3+n.
$$
Similarly, we see that $a_1-2a_2+a_3+n\leqslant 1$ and $a_1+a_2-2a_3+n\leqslant 1$.
Adding these three inequalities together, we get $n\leqslant 1$.
On the other hand, we have
$$
1=D\cdot L_4=\Big(a_1L_1+a_2L_2+a_3L_3+\Omega\Big)\cdot L_4=a_1+a_2+a_3+\Omega\cdot L_4\geqslant a_1+a_2+a_3,
$$
which gives $a_1+a_2+a_3\leqslant 1$.
In particular, we have $m=n+a_1+a_2+a_3\leqslant 2$.
Then Lemmas~\ref{lemma:point-not-on-exceptional} and \ref{lemma:O-E-F} imply that $Q$ is contained in one of the curves $\widetilde{L}_1$, $\widetilde{L}_2$, and $\widetilde{L}_3$.
Without loss of generality, we may assume that $Q\in\widetilde{L}_1$.

As $\widetilde{L}_2$ and $\widetilde{L}_3$ do not pass through $Q$, the log pair
$(\widetilde{S}_4, \lambda a_1\widetilde{L}_1+\lambda\widetilde{\Omega}+(\lambda (n+a_1+a_2+a_3)-1)E)$
is not log canonical at the point $Q$. Moreover, we have $\mult_Q(\widetilde{\Omega})\leqslant n\leqslant 1$.
Thus, we can apply Theorem~\ref{theorem:Trento} to the log pair \eqref{equation:log-pull-back-2} and the curves $\widetilde{L}_1$ and $E$.
This gives either
\begin{multline*}
\lambda\Big(1+2a_1-a_2-a_3-n\Big)=\lambda\Big(\big(H-a_1L_1-a_2L_2-a_3L_3\big)\cdot L_1-n\Big)=\\
=\lambda\Big(\Omega\cdot L_1-n\Big)=\lambda\widetilde{\Omega}\cdot\widetilde{L}_1>2\Big(1-\big(\lambda(n+a_1+a_2+a_3)-1\big)\Big)
\end{multline*}
or $\lambda n=\lambda\widetilde{\Omega}\cdot E>2(1-\lambda a_1)$ (or both).
If the former inequality holds, then
$$
4a_1+a_2+a_3+n>\frac{4}{\lambda}-1>5,
$$
because $\lambda<\frac{2}{3}$ by \eqref{equation:lambda-2-3}.
One the other hand, we know that $a_1\leqslant 1$ by Lemma~\ref{lemma:Pukhlikov},
and we proved earlier that $a_1+a_2+a_3\leqslant 1$ and $n\leqslant 1$.
This implies that $4a_1+a_2+a_3+n\leqslant 5$.
Thus, we see that the latter inequality holds.
It gives $1+2a_1>\frac{2}{\lambda}>3$, since $\lambda<\frac{2}{3}$ by \eqref{equation:lambda-2-3}.
Thus, we conclude that $a_1>1$, which is impossible by Lemma~\ref{lemma:Pukhlikov}.
\end{proof}

\begin{lemma}
\label{lemma:B2}
The case \eqref{quartic-m3} is impossible.
\end{lemma}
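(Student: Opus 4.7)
The plan is to exploit the fact that in case \eqref{quartic-m3} the tangential quartic $T_P$ is irreducible, which forces a single clean intersection-theoretic bound on $m=\mathrm{mult}_P(D)$ that immediately contradicts \eqref{equation:m-big}.

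Since $T_P$ is irreducible in this case, Lemma~\ref{lemma:assumption-support} lets us assume outright that $T_P\not\subset\mathrm{Supp}(D)$. First I would record the elementary intersection computation $D\cdot T_P=H^2=4$, valid because $D\sim_{\mathbb{Q}} H$ and $T_P$ is a hyperplane section of $S_4$. Next, since $T_P$ is not a component of the effective divisor $D$ and $\mathrm{mult}_P(T_P)=3$, the usual local intersection inequality gives
\[
4\;=\;D\cdot T_P\;\geqslant\;\mathrm{mult}_P(D)\,\mathrm{mult}_P(T_P)\;=\;3m,
\]
so that $m\leqslant \tfrac{4}{3}$.

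On the other hand, by \eqref{equation:lambda-2-3} applied with $\mathrm{mult}_P(T_P)=3$ we have $\lambda<\tfrac{2}{3}$, hence $\tfrac{1}{\lambda}>\tfrac{3}{2}$. Combined with \eqref{equation:m-big} this gives $m>\tfrac{3}{2}>\tfrac{4}{3}$, contradicting the bound just obtained. This rules out case \eqref{quartic-m3}.

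There is essentially no obstacle here: the irreducibility of $T_P$ in case \eqref{quartic-m3} means no genuine components of $D$ along $T_P$ need to be tracked, and the bound on $m$ follows from a one-line intersection computation. The only small care required is to invoke Lemma~\ref{lemma:assumption-support} at the outset, so that the intersection inequality is actually available; after that the two inequalities on $m$ are directly incompatible.
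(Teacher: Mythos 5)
Your argument is correct and essentially identical to the paper's: both reduce to the intersection bound $4=D\cdot T_P\geqslant 3m$ (using Lemma~\ref{lemma:assumption-support} to ensure $T_P\not\subset\mathrm{Supp}(D)$, since $T_P$ is irreducible) and combine it with $m>1/\lambda$ from \eqref{equation:m-big}. The only cosmetic difference is that you invoke the stronger bound $\lambda<\tfrac{2}{3}$ from \eqref{equation:lambda-2-3}, whereas the paper gets the contradiction already from the weaker $\lambda<\tfrac{3}{4}$ in \eqref{equation:lambda-3-4}.
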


\begin{proof}
Suppose that we are in the case \eqref{quartic-m3}.
Then $\mult_P(T_P)=3$ and $T_P$ is an irreducible quartic curve with a singular point $P$ of multiplicity~$3$.
Thus, we have the following picture:
\vspace{-0.1cm}
\begin{center}
\begin{tikzpicture}[scale=1.3]
\draw  (0.5,2) to [out=-100,in=120] (0.6,1.3) to [out=-60,in=-90]  (2.5,1.3)  to [out=90,in=60] (0.6,1.3)
to [out=-70,in=-90] (4,1.3) to [out=90,in=70] (0.6,1.3)  to [out=-120,in=100] (0.5,0);
\node at (0.3,1.3) {$P$};
\node at (4,0.5) {$T_P$};
\draw [fill] (0.6,1.3) circle [radius=0.06];
\end{tikzpicture}
\end{center}

We have $C_\star=C$. Thus, it follows from \eqref{equation:m-big} that
$$
4=H\cdot C=D\cdot C\geqslant\mathrm{mult}_P(C)\mult_P(D)\geqslant 3\mult_P(D)>\frac{3}{\lambda},
$$
which contradicts \eqref{equation:lambda-3-4}.
\end{proof}

\begin{lemma}
\label{lemma:B3}
The case \eqref{conic+2lines} is impossible.
\end{lemma}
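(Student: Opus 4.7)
The plan is to mirror the proof of Lemma \ref{lemma:B1}: encode the configuration intersection-theoretically on $S_4$, bound the coefficients of the components of $T_P$ in $D$, localize $Q$ onto $\widetilde{T}_P$, and finish with Theorems \ref{theorem:adjunction} and \ref{theorem:Trento}. Here $T_P = L_1 + L_2 + C$ with $L_1, L_2$ lines and $C$ a smooth conic all through $P$, so $\mult_P T_P = 3$ and $\lambda < 2/3$ by \eqref{equation:lambda-2-3}. By Lemma \ref{lemma:lines}, $L_1, L_2 \subset \Supp D$, and by Lemma \ref{lemma:assumption-support} we may take $C_\star = C$. Writing $D = a_1 L_1 + a_2 L_2 + \Omega$ and using $K_{S_4}=0$ together with $L_1+L_2+C \sim H$ to compute $L_i^2 = C^2 = -2$, $L_1 \cdot L_2 = 1$, $L_i \cdot C = 2$, one obtains
$$
\Omega \cdot L_i = 1 + 2a_i - a_{3-i}, \qquad \Omega \cdot C = 2 - 2(a_1+a_2).
$$

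From $\Omega \cdot C \geq \mult_P \Omega = n$ and Lemma \ref{lemma:Pukhlikov} ($a_i \leq 1$) we get $a_1+a_2 \leq 1$, $n \leq 2 - 2(a_1+a_2)$, and $m = n+a_1+a_2 < 2$. Applying Theorem \ref{theorem:adjunction} to $(S_4, \lambda D)$ at $P$ with $C_1 = L_i$ yields $\lambda(1+2a_i) > 1$, so $a_i > 1/4$, giving $a_1+a_2 > 1/2$ and $n < 1$. Since $m < 2 < 2/\lambda$ and $m+\widetilde{m} \leq 2m < 4 < 3/\lambda$, Lemma \ref{lemma:O-E-F} gives $O \neq \overline{E} \cap F$; taking the contrapositive of Lemma \ref{lemma:point-not-on-exceptional} then forces $Q \in \widetilde{T}_P$, so $Q$ is one of $\widetilde{L}_1 \cap E$, $\widetilde{L}_2 \cap E$, $\widetilde{C} \cap E$ (possibly coinciding if $C$ is tangent to some $L_i$ at $P$).

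If $Q \in \widetilde{C}$, I apply Theorem \ref{theorem:adjunction} to the log pair \eqref{equation:log-pull-back} at $Q$ with $C_1 = \widetilde{C}$ added to the boundary with coefficient $0$. Using $\widetilde{C} \cdot E = 1$ transversely at $Q$, the global bound $\widetilde{C} \cdot \widetilde{\Omega} = 2 - 2(a_1+a_2) - n$, and the fact that $\widetilde{L}_i$ passes through $Q$ only in the tangency case, the conclusion $\mult_Q(\widetilde{C} \cdot \Delta) > 1$ collapses (via $m = n+a_1+a_2$) to $\lambda(2 - (a_1+a_2)) > 2$, or $\lambda(2 - a_j) > 2$ in the tangency case; each forces $a_1+a_2 < -1$ or $a_j < -1$, absurd. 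If instead $Q \in \widetilde{L}_1$ (say) and $Q \notin \widetilde{C}$, I apply Theorem \ref{theorem:Trento} to \eqref{equation:log-pull-back} at $Q$ with $C_1 = E$, $C_2 = \widetilde{L}_1$; the hypotheses $\lambda m - 1 < 1/3$, $\lambda a_1 < 2/3$, and $\mult_Q(\Delta) \leq \lambda n < 1$ all hold. Bounding $\mult_Q(E \cdot \widetilde{\Omega}) \leq E \cdot \widetilde{\Omega} = n$, the two Trento alternatives become
$$
3n + 2(a_1+a_2) > 4/\lambda > 6 \quad \text{or} \quad n + 2a_1 > 2/\lambda > 3,
$$
both impossible in view of $n \leq 2 - 2(a_1+a_2)$ and $a_1+a_2 > 0$.

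The main obstacle is the bookkeeping when $C$ is tangent to an $L_i$ at $P$, so that $\widetilde{C}$ and $\widetilde{L}_i$ coincide at $Q$ on $E$; the adjunction argument with $C_1 = \widetilde{C}$ handles this uniformly since it only requires $Q \in \widetilde{C}$, and the extra $\lambda a_i$ contribution from $\widetilde{L}_i$ only strengthens the resulting contradiction. A minor verification is that $\lambda m - 1 \geq 0$, which holds by \eqref{equation:m-big}.
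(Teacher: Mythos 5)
Your argument is correct, but it takes a genuinely longer route than the paper's. You set up the same intersection-theoretic data ($\Omega\cdot L_i=1+2a_i-a_{3-i}$, $\Omega\cdot C=2-2(a_1+a_2)$, hence $a_1+a_2\leqslant 1$) and then blow up, using Lemmas~\ref{lemma:O-E-F} and \ref{lemma:point-not-on-exceptional} to force $Q\in\widetilde{T}_P$, and finally a case analysis ($Q\in\widetilde{C}$ versus $Q\in\widetilde{L}_i$) via Theorems~\ref{theorem:adjunction} and \ref{theorem:Trento} on $\widetilde{S}_4$, mirroring Lemma~\ref{lemma:B1}. The paper instead never leaves $S_4$: once $n\leqslant\frac{6}{5}$ gives $\lambda n\leqslant 1$, it applies Theorem~\ref{theorem:Trento} directly at $P$ with $C_1=L_1$, $C_2=L_2$, $\Delta=\lambda\Omega$, and WLOG obtains $\lambda(1+2a_1+a_2)>2$, i.e.\ $2a_1+a_2>\frac{2}{\lambda}-1>2$, contradicting $2a_1+a_2=a_1+(a_1+a_2)\leqslant 2$. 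That is the whole proof; no blowup machinery is required here, which is the real difference from case \eqref{three-lines}, where the $\mathrm{mult}_P(\Delta)\leqslant 1$ hypothesis fails on $S_4$ and the blowup is essential.

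Two small remarks on your version. First, your first Trento alternative $3n+2(a_1+a_2)>\frac{4}{\lambda}$ arises from bounding $\mathrm{mult}_Q(\widetilde{L}_1\cdot\Delta)$ by $\lambda n$; this is only licensed by reading Theorem~\ref{theorem:Trento} with $C_1\cdot\Delta$ in both alternatives (as stated in the text, but not as applied in Lemma~\ref{lemma:B1}). With the standard form the alternative is $\lambda(1+2a_1-a_2-n)>2(2-\lambda m)$, i.e.\ $4a_1+a_2+n>\frac{4}{\lambda}-1>5$, which still contradicts $a_1\leqslant 1$, $a_1+a_2\leqslant 1$, $n<1$. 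Second, your case $Q\in\widetilde{C}$ can be dispatched without adjunction: since $C=C_\star\not\subset\mathrm{Supp}(D)$ one has $\widetilde{m}\leqslant\widetilde{C}\cdot\widetilde{D}=2-m$, so $m+\widetilde{m}\leqslant 2$, which already contradicts \eqref{equation:m-m-big}.
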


\begin{proof}
Suppose that we are in the case \eqref{conic+2lines}.
Then $\mult_P(T_P)=3$ and $T_P$ consists of a conic $C_1$ and  two lines $L_1$ and $L_2$, all intersecting at the point $P$.
Thus, we have the following picture:
\begin{center}
\begin{tikzpicture}[scale=1.3]
\draw (-0.17,-0.4) --(0.9,2.7);
\draw (-0.26,2.7) --(2,0);
\draw (0.57,1.73) to [out=0, in=90] (2,1) to [out=-90, in=0](0.57,0.2) to [out=180, in=-90] (-0.86,1) to [out=90, in=180] (0.57,1.73);
\node at (0.63,1.32) {$P$};
\node at (1.2,2.4) {$L_2$};
\node at (-0.4,2.4) {$L_1$};
\node at (-1.1,0.5) {$C_1$};
\draw [fill] (0.57,1.73) circle [radius=0.06];
\end{tikzpicture}
\end{center}

By Lemma~\ref{lemma:lines}, both lines $L_1$ and $L_2$ are contained in the support of the divisor $D$.
Hence we can write $D=a_1L_1+a_2L_2+\Omega$, where $a_1$ and $a_2$ are positive rational numbers,
and $\Omega$ is an effective $\mathbb{Q}$-divisor whose support does not contain the lines $L_1$ and $L_2$.
Recall that the support of $\Omega$ does not contain the curve $C_\star$ by assumption.
In our case, the curve $C_\star$ is the conic $C_1$.

Put $n=\mathrm{mult}_{P}(\Omega)$. Let us show that $n\leqslant\frac{6}{5}$. We have
$$
n\leqslant\Omega\cdot L_1=\big(H-a_1L_1-a_2L_2\big)\cdot L_1=1+2a_1-a_2.
$$
Similarly, we see that $n\leqslant 1-a_1+2a_2$. Finally, we have
$$
n\leqslant\Omega\cdot C_\star=\big(H-a_1L_1-a_2L_2\big)\cdot C_\star=2-2a_1-2a_2,
$$
which implies that $a_1+a_2\leqslant 1-\frac{n}{2}$. Adding these three inequalities together, we get $n\leqslant\frac{6}{5}$.

By \eqref{equation:lambda-2-3}, we have $\lambda<\frac{2}{3}$.
Since $n\frac{6}{5}$, we see that $\lambda n\leqslant 1$.
Thus, we can apply Theorem~\ref{theorem:Trento} to the log pair $(S_4,a_1L_1+a_2L_2+\Omega)$.
This gives
$\lambda\Omega\cdot L_1>2(1-\lambda a_2)$ or $\lambda\Omega\cdot L_2>2(1-\lambda a_1)$.
Without loss of generality, we may assume that the former inequality holds. Then
$$
\lambda\big(1+2a_1-a_2\big),=\lambda\big(H-a_1L_1-a_2L_2\big)\cdot L_1=\lambda\Omega\cdot L_1>2\big(1-\lambda a_2\big),
$$
which implies that $2a_1+a_2>\frac{2}{\lambda}-1$. Since $\lambda<\frac{2}{3}$, we have $2a_1+a_2>2$,
which is impossible since we already proved that $a_1+a_2\leqslant 1-\frac{n}{2}\leqslant 1$.
\end{proof}

\begin{lemma}
\label{lemma:B4}
The case \eqref{cubic+line} is impossible.
\end{lemma}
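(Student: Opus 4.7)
My plan is to dispose of case \eqref{cubic+line} by a single intersection-multiplicity estimate, in exactly the spirit of Lemma \ref{lemma:B2}. In this case $T_P = L + C$, where $L$ is a line passing through $P$ and $C$ is an irreducible plane cubic with a singular point of multiplicity $2$ at $P$. The decisive feature is that the singular component of $T_P$ already has multiplicity $2$ at $P$, so no blow-up analysis is needed.

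First I identify $C_\star$. The line $L$ lies in $S_4$ and passes through $P$, so Lemma \ref{lemma:lines} forces $L \subseteq \mathrm{Supp}(D)$. The irreducible component $C_\star$ of $T_P$ that is not contained in $\mathrm{Supp}(D)$, whose existence is guaranteed by Lemma \ref{lemma:assumption-support}, must therefore be the cubic $C$. In particular, $C \not\subseteq \mathrm{Supp}(D)$.

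Since $\mathrm{mult}_P(T_P) = 3$, the bound \eqref{equation:lambda-2-3} gives $\lambda < \frac{2}{3}$, which sharpens \eqref{equation:m-big} to $m > \frac{1}{\lambda} > \frac{3}{2}$. On the other hand, $\mathrm{mult}_P(C) = 2$ together with $C \not\subseteq \mathrm{Supp}(D)$ forces
$$
3 = H \cdot C = D \cdot C \geqslant \mathrm{mult}_P(C) \cdot \mathrm{mult}_P(D) = 2m.
$$
Combining the two inequalities yields $3 \geqslant 2m > 3$, a contradiction. There is essentially no obstacle here: the double point on $C$ already supplies the extra factor that makes the argument immediate, so this case is strictly shorter than \eqref{three-lines} or \eqref{conic+2lines}, where we had to apply Theorem \ref{theorem:Trento} on the blow-up because no component of $T_P$ was simultaneously excluded from $\mathrm{Supp}(D)$ and of multiplicity $\geqslant 2$ at $P$.
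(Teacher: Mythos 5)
Your proof is correct and essentially matches the paper's argument. The paper first decomposes $D = aL + \Omega$ and computes $3 = D\cdot C_1 = 3a + \Omega\cdot C_1 \geqslant 3a + 2n \geqslant 2a+2n = 2m$, whereas you apply the intersection-multiplicity bound $D\cdot C_1 \geqslant \mathrm{mult}_P(D)\,\mathrm{mult}_P(C_1) = 2m$ directly (valid since $C_\star = C_1$ is not in $\Supp(D)$); both reduce to $m \leqslant \tfrac{3}{2}$ against $m > \tfrac{1}{\lambda} > \tfrac{3}{2}$, so the two arguments are the same modulo that minor bookkeeping.
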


\begin{proof}
Suppose that we are in the case \eqref{cubic+line}.
Then $\mult_P(T_P)=3$ and $T_P$ consists of a cubic curve $C_1$ with a singular point $P$ of multiplicity $2$ and a line $L$ passing through $P$.
Thus, we have the following picture:
\vspace{-0.15cm}
\begin{center}
\begin{tikzpicture}[scale=1.3]
\draw  (0.5,2) to [out=-100,in=120] (0.6,1.3) to [out=-60,in=-90]   (3.2,1.3) to [out=90,in=90] (2,1.3)  to [out=-90,in=100] (3,0);
\node at (2.5,1.02) {$P$};
\node at (0.3,0.8) {$C_1$};
\node at (4,2) {$L$};
\draw (2,0) --(3.8,2.2);
\draw [fill] (2.5,0.6) circle [radius=0.06];
\end{tikzpicture}
\end{center}

By Lemma~\ref{lemma:lines}, the line $L$ is contained in the support of the divisor $D$.
Hence, $C_\star=C_1$, and we can write $D=aL+\Omega$, where $a$ is a positive rational number,
and $\Omega$ is an effective $\mathbb{Q}$-divisor whose support does not contain the line $L$.
Put $n=\mathrm{mult}_{P}(\Omega)$. Then
$$
3=H\cdot C_1=D\cdot C_1=\big(aL+\Omega)\cdot C_1=3a+\Omega\cdot C_1\geqslant 3a+2n\geqslant 2a+2n,
$$
which implies that $a+n\leqslant\frac{3}{2}$.
On the other hand, $\lambda<\frac{2}{3}$ by \eqref{equation:lambda-2-3},  so that $n+a>\frac{3}{2}$ by Lemma~\ref{lemma:Skoda}.
The contradiction is clear.
\end{proof}

\begin{lemma}
\label{lemma:C1-C2}
The cases \eqref{2lines} and \eqref{2lines-conic} are impossible.
\end{lemma}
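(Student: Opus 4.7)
The plan is to handle (C1) and (C2) in parallel, since the local geometry at $P$ is identical in both cases: $T_P = L_1 + L_2 + R$, where $L_1, L_2$ are smooth lines meeting transversally at $P$, while $R$ (two disjoint lines in (C1), a smooth conic in (C2)) is disjoint from $P$. By Lemma~\ref{lemma:lines} both $L_1, L_2$ lie in $\Supp(D)$, and by Lemma~\ref{lemma:assumption-support} I may pick a component $C_\star \subset R$ not in $\Supp(D)$. Writing
\[
D = a_1 L_1 + a_2 L_2 + \Omega, \qquad n = \mult_P(\Omega), \qquad m = a_1 + a_2 + n,
\]
where $L_1, L_2, C_\star \not\subset \Supp(\Omega)$, I extract constraints from the intersection numbers $L_i^2 = -2$, $L_1 \cdot L_2 = 1$, and $L_i \cdot C_\star \in \{1, 2\}$. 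The conditions $\Omega \cdot L_i \geqslant n$ ($i=1,2$) and $\Omega \cdot C_\star \geqslant 0$ yield
\[
n \leqslant 1 + 2a_i - a_{3-i} \quad (i=1,2), \qquad a_1 + a_2 \leqslant 1,
\]
and hence $n \leqslant 1 + \tfrac{a_1+a_2}{2} \leqslant \tfrac{3}{2}$ and $m \leqslant \tfrac{5}{2}$.

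Since the only components of $T_P$ through $P$ are $L_1$ and $L_2$, one has $\widetilde T_P \cap E = \{\widetilde L_1 \cap E,\ \widetilde L_2 \cap E\}$. Suppose $Q \notin \widetilde T_P$. Then $\widetilde L_1, \widetilde L_2$ miss $Q$, so $\widetilde m \leqslant \widetilde\Omega \cdot E = n$, giving
\[
m + \widetilde m \leqslant a_1 + a_2 + 2n \leqslant 4 < \tfrac{3}{\lambda};
\]
combined with $m \leqslant \tfrac{5}{2} < \tfrac{2}{\lambda}$, Lemma~\ref{lemma:point-not-on-exceptional} forces $O = \overline E \cap F$, contradicting Lemma~\ref{lemma:O-E-F} whenever $m \leqslant 2$ (equivalently, $a_1 + a_2 \leqslant \tfrac{2}{3}$). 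The residual sub-case $a_1 + a_2 > \tfrac{2}{3}$ (where $m$ can exceed $2$) will be closed by a further blow-up at $O$, applying Theorem~\ref{theorem:adjunction} to~\eqref{equation:log-pull-back-2} along $F$ and $\overline E$.

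Having reduced to $Q \in \widetilde T_P$, by symmetry assume $Q = \widetilde L_1 \cap E$, so $\widetilde L_2$ misses $Q$. Applying Theorem~\ref{theorem:Trento} to the log pair~\eqref{equation:log-pull-back} at $Q$ with the transverse smooth curves $\widetilde L_1$ and $E$ gives one of the two inequalities
\[
\lambda(1 + 4a_1 + a_2 + n) > 4 \qquad \text{or} \qquad \lambda(n + 2a_1) > 2,
\]
using $\mult_Q(\widetilde L_1 \cdot \widetilde\Omega) \leqslant 1 + 2a_1 - a_2 - n$ and $\mult_Q(E \cdot \widetilde\Omega) \leqslant n$. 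The main obstacle is that since $\mult_P(T_P) = 2$ we only have $\lambda < \tfrac{3}{4}$ (rather than $\lambda < \tfrac{2}{3}$ as in cases (B1)--(B4)), so these inequalities do not immediately contradict $a_1 \leqslant 1$, $a_1 + a_2 \leqslant 1$ and $n \leqslant \tfrac{3}{2}$. To finish, I would split on whether $n \leqslant 1/\lambda$: in that range Theorem~\ref{theorem:Trento} applies at $P$ itself with $L_1, L_2$, producing the extra inequality $2a_1 + a_2 > \tfrac{5}{3}$, which—together with the Trento-at-$Q$ bound and Pukhlikov's Lemma~\ref{lemma:Pukhlikov}—is expected to force $a_1 > 1$; the complementary range $n > 1/\lambda$ should be closed by exploiting the sharper asymmetric bound $n \leqslant 1 + 2\min(a_1, a_2) - \max(a_1, a_2)$.
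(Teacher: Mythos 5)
Your decomposition and the constraints you derive ($n \leqslant 1 + 2a_1 - a_2$, $n \leqslant 1 - a_1 + 2a_2$, $a_1 + a_2 \leqslant 1$, $n \leqslant \tfrac{3}{2}$, $m \leqslant \tfrac{5}{2}$) match the paper, as does the split on whether $Q$ lies on $\widetilde{L}_1 \cup \widetilde{L}_2$ --- but neither branch is actually closed, and in both places you reach for machinery that is either unnecessary or does not quite deliver. In the branch $Q \notin \widetilde{L}_1 \cup \widetilde{L}_2$, you route through Lemma~\ref{lemma:O-E-F}, which needs $m \leqslant 2$, and so must peel off the range $a_1 + a_2 > \tfrac{2}{3}$ (your ``equivalently'' is also wrong there: $m\leqslant 2$ does not imply $a_1+a_2\leqslant\tfrac{2}{3}$; only the reverse implication, which is the one you use, holds). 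Your ``further blow-up'' for the residual range is a misdirection: the paper never needs $m\leqslant 2$. Since $m<\tfrac{2}{\lambda}$ and $m+\widetilde{m}\leqslant a_1+a_2+2n\leqslant 4<\tfrac{3}{\lambda}$ already follow unconditionally from the constraints, Lemma~\ref{lemma:point-not-on-exceptional} places $O=F\cap\overline{E}$ in all cases, and then Theorem~\ref{theorem:adjunction} applied to $\overline{E}$ for the pair~\eqref{equation:log-pull-back-2} (no new blow-up) yields $a_1+a_2+2n>\tfrac{3}{\lambda}>4$, contradicting $a_1+a_2\leqslant 1$ and $n\leqslant\tfrac{3}{2}$.

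In the branch $Q=\widetilde{L}_1\cap E$, your Trento-at-$Q$ disjunction is correct and you correctly dismiss $4a_1+a_2+n>\tfrac{13}{3}$, but the remainder is only a plan and contains errors. You do not verify the Trento hypothesis $\lambda\,\mult_Q(\widetilde{\Omega})\leqslant 1$ (the paper derives $\widetilde{n}\leqslant 1$ from $3\widetilde{n}\leqslant 2\widetilde{n}+n\leqslant 2+a_1+a_2\leqslant 3$). Your auxiliary Trento-at-$P$ step only gives the disjunction $2a_1+a_2>\tfrac{5}{3}$ \emph{or} $a_1+2a_2>\tfrac{5}{3}$, not a single inequality; even the favourable branch, combined with $a_1+a_2\leqslant 1$, gives $a_1>\tfrac{2}{3}$, not $a_1>1$; and the range $n>1/\lambda$ is simply deferred. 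No split on $n$ is needed. From the surviving Trento-at-$Q$ inequality $\lambda n>2(1-\lambda a_1)$, i.e.\ $n+2a_1>\tfrac{8}{3}$, the bounds $n\leqslant 1-a_1+2a_2$ and $a_1+a_2\leqslant 1$ force $a_2>\tfrac{2}{3}$. Then Theorem~\ref{theorem:adjunction} applied to the pair~\eqref{equation:log-pull-back} along $\widetilde{L}_1$ gives $\lambda+3\lambda a_1-1>1$, so $a_1>\tfrac{5}{9}$, and $a_1+a_2>\tfrac{5}{9}+\tfrac{2}{3}>1$ is the desired contradiction.
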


\begin{proof}
Suppose that we are either in the case \eqref{2lines} or in the case \eqref{2lines-conic}.
Then $T_P$ consists of two lines $L_1$ and $L_2$, and a possibly reducible conic $C_1$,
where $P$ is the intersection point of the lines $L_1$ and $L_2$, and $P$ is not contained in the conic $C_1$.
If we are in the case \eqref{2lines}, then the conic $C_1$ splits as a union of two different lines $L_3$ and $L_4$,
which implies that we have the following picture:
\begin{center}
\begin{tikzpicture}[scale=1.3]
\draw (0,0) --(1,3);
\draw (-0.5,3) --(2,0);
\draw (-1.2,1.2) --(2.5,0.6);
\draw (-1.5,0.52) --(3,2.2);
\node at (0.3,1.65) {$P$};
\node at (-0.6,2.5) {$L_1$};
\node at (1.3,2.5) {$L_2$};
\node at (3,2.5) {$L_3$};
\node at (3,0.8) {$L_4$};
\draw [fill] (0.57,1.73) circle [radius=0.06];
\end{tikzpicture}
\end{center}

If we are in the case \eqref{2lines-conic}, then the conic $C_1$ is irreducible, so that we have the following picture:
\begin{center}
\begin{tikzpicture}[scale=1.3]
\draw (-0.37,-0.7) --(0.9,2.7);
\draw (-0.26,2.7) --(2.5,-0.5);
\draw (0.57,1.23) to [out=0, in=90] (2,0.5) to [out=-90, in=0](0.57,-0.3) to [out=180, in=-90] (-0.86,0.5) to [out=90, in=180] (0.57,1.23);
\node at (0.9,1.82) {$P$};
\node at (1.2,2.4) {$L_2$};
\node at (-0.4,2.4) {$L_1$};
\node at (-1.3,0.5) {$C_1$};
\draw [fill] (0.56,1.77) circle [radius=0.06];
\end{tikzpicture}
\end{center}

By Lemma~\ref{lemma:lines}, both lines $L_1$ and $L_2$ are contained in the support of the divisor $D$.
In particular, $C_\star\ne L_1$ and $C_\star\ne L_2$.
Write $D=\Omega+a_1L_1+a_2L_2$, where $a_1$ and $a_2$ are positive rational numbers,
and $\Omega$ is an effective $\mathbb{Q}$-divisor whose support does not contain the lines $L_1$ and $L_2$.
Put $n=\mult_P(\Omega)$. Then
$$
n\leqslant\Omega\cdot L_1=\Big(H-a_1L_1-a_2L_2\Big)\cdot L_1=1+2a_1-a_2.
$$
Similarly, we see that $n\leqslant 1-a_1+2a_2$.
Finally, we have
$$
0\leqslant\Omega\cdot C_\star=\Big(H-a_1L_1-a_2L_2\Big)\cdot C_\star=\mathrm{deg}\big(C_\star\big)\big(1-a_1-a_2\big),
$$
which implies that $a_1+a_2\leqslant 1$. Adding these three inequalities together, we get $n\leqslant\frac{3}{2}$.

Recall that $m=n+a_1+a_1$. We see that $m\leqslant\frac{5}{2}$, because $a_1+a_2\leqslant 1$ and $n\leqslant\frac{3}{2}$.
In particular, $\lambda m<\frac{15}{8}$, because $\lambda<\frac{3}{4}$ by \eqref{equation:lambda-3-4}.

Denote by $\widetilde{\Omega}$ the proper transform of the divisor $\Omega$ on the surface $\widetilde{S}_4$.
Similarly, denote by $\widetilde{L}_1$ and $\widetilde{L}_2$ the proper transform of the lines $L_1$ and $L_2$ on the surface $\widetilde{S}_4$, respectively.
Then we can rewrite the log pair \eqref{equation:log-pull-back} as
$$
\Big(\widetilde{S}_4,\lambda a_1\widetilde{L}_1+\lambda a_2\widetilde{L}_2+\lambda\widetilde{\Omega}+\big(\lambda(a_1+a_2+n)-1\big)E\Big).
$$
Since  $\lambda m<\frac{15}{8}$, this log pair is log canonical at every point of $E$ that is different from $Q$ by Corollary~\ref{corollary:log-pull-back}.
Put $\widetilde{n}=\mathrm{mult}_{Q}(\widetilde{\Omega})$. Then $\widetilde{n}\leqslant n$.

Suppose that $Q\in\widetilde{L}_1$. Then $Q\not\in\widetilde{L}_2$ and
$$
\widetilde{n}\leqslant\widetilde{\Omega}\cdot\widetilde{L}_1=\Omega\cdot L_1-n=1+2a_1-a_2-n.
$$
This gives
$2\widetilde{n}\leqslant \widetilde{n}+n\leqslant 1+2a_1-a_2$, because $\widetilde{n}\leqslant n$.
Since, we already know that $n\leqslant 1-a_1+2a_2$, we get
$$
3\widetilde{n}\leqslant 2\widetilde{n}+n\leqslant 2+a_1+a_2\leqslant 3,
$$
because $a_1+a_2\leqslant 1$. Thus, we see that $\widetilde{n}\leqslant 1$.
On the other hand, the log pair
$(\widetilde{S}_4,\lambda a_1\widetilde{L}_1+\lambda\widetilde{\Omega}+(\lambda(a_1+a_2+n)-1)E)$
is not log canonical at $Q$. Thus, we can apply Theorem~\ref{theorem:Trento} to this log pair.
This gives
$$
\lambda\Big(1+2a_1-a_2-n\Big)=\lambda\Big(\Omega\cdot L_1-n\Big)=\lambda\widetilde{\Omega}\cdot\widetilde{L}_1>2\Big(1-\big(\lambda(a_1+a_2+n\big)-1\Big)
$$
or $\lambda n=\lambda\widetilde{\Omega}\cdot E>2(1-\lambda a_1)$.
Since $\lambda\leqslant\frac{3}{4}$ by \eqref{equation:lambda-3-4},
the former inequality gives
$$
n+4a_1+a_2>\frac{13}{3},
$$
which is impossible, because $n\leqslant 1+2a_2-a_1$ and $a_1+a_2\leqslant 1$.
Thus, the later inequality holds. It gives  $n+2a_1>\frac{8}{3}$.
Since $n\leqslant 1+2a_2-a_1$ and $a_1+a_2\leqslant 1$, we have $a_2>\frac{2}{3}$.
Now applying Theorem~\ref{theorem:adjunction} to the log pair \eqref{equation:log-pull-back}, we obtain
\begin{multline*}
\lambda+3\lambda a_1-1=\lambda\Big(H-a_1L_1-a_2L_2\Big)\cdot L_1+\lambda a_1+\lambda a_2-1=\lambda\Omega\cdot L_1+\lambda a_1+\lambda a_2-1=\\
=\lambda\widetilde{\Omega}\cdot\widetilde{L}_1+\lambda a_1+\lambda a_2+\lambda n-1=\Big(\lambda\widetilde{\Omega}+\big(\lambda(a_1+a_2+n)-1\big)E\Big)\cdot \widetilde{L}_1>1,
\end{multline*}
which results in $a_1>\frac{5}{9}$. On the other hand, we have $a_1+a_2\leqslant 1$ and $a_2>\frac{2}{3}$, which is absurd.

We see that $Q\not\in\widetilde{L}_1$. Similarly, we see that $Q\not\in\widetilde{L}_2$.

Recall that $m=a_1+a_1+n$. We also have $\widetilde{m}=\widetilde{n}$, because $Q\not\in\widetilde{L}_1\cup\widetilde{L}_2$.
Earlier, we proved that $a_1+a_2\leqslant 1$ and $n\leqslant\frac{3}{2}$.
In particular, we have $\widetilde{n}\leqslant\frac{3}{2}$ as well, because $\widetilde{n}\leqslant n$.
Thus, we have
$$
m+\widetilde{m}=a_1+a_2+n+\widetilde{n}\leqslant a_1+a_2+2n\leqslant 4<\frac{3}{\lambda},
$$
because $\lambda<\frac{3}{4}$ by \eqref{equation:lambda-3-4}.
Thus, it follows from Corollary~\ref{corollary:log-pull-back-2} that
the log pair \eqref{equation:log-pull-back-2} is log canonical at every point of $F$ that is different from $O$.
Moreover, we have $O=F\cap\overline{E}$ by Lemma~\ref{lemma:point-not-on-exceptional},
because  $m<\frac{2}{\lambda}$, $m+\widetilde{m}<\frac{3}{\lambda}$, and $Q\not\in\widetilde{L}_1\cup\widetilde{L}_2$.

Denote by $\overline{\Omega}$ the proper transform of the divisor $\Omega$ on the surface $\overline{S}_4$.
Since $Q\not\in\widetilde{L}_1\cup\widetilde{L}_2$, the log pair
$$
\Big(\overline{S}_4, \lambda\overline{\Omega}+\big(\lambda(a_1+a_2+n)-1\big)\overline{E}+\big(\lambda(a_1+a_2+n+\widetilde{n})-2\big)F\Big)
$$
is not log canonical at the point $O$ and is log canonical at every point of $F$ that is different from $O$.
Applying Theorem~\ref{theorem:adjunction} to this log pair and the curve $\overline{E}$, we get
\begin{multline*}
\lambda\big(a_1+a_2+2n)-2=\lambda\big(n-\widetilde{n}\big)+\lambda(a_1+a_2+n+\widetilde{n})-2=\\
=\lambda\overline{\Omega}\cdot\overline{E}+\lambda(a_1+a_2+n+\widetilde{n})-2=\Big(\lambda\overline{\Omega}+\big(\lambda(a_1+a_2+n+\widetilde{n})-2\big)F\Big)\cdot\overline{E}>1
\end{multline*}
which implies that $a_1+a_2+2n>\frac{3}{\lambda}>4$, because $\lambda<\frac{3}{4}$ by \eqref{equation:lambda-3-4}.
This is a contradiction, since we already proved that $a_1+a_2\leqslant 1$ and $n\leqslant\frac{3}{2}$.
\end{proof}

\begin{lemma}
\label{lemma:C3}
The case \eqref{line+conic} is impossible.
\end{lemma}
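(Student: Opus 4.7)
The plan is to follow the template of Lemmas~\ref{lemma:B3} and~\ref{lemma:C1-C2}. Since $S_4$ is a K3 surface, adjunction gives $L_1^2=L_2^2=C_1^2=-2$; combined with $T_P = L_1+L_2+C_1 \sim H$ and $L_i\cdot H=1$, $C_1\cdot H=2$, this yields $L_1\cdot L_2=1$ and $L_1\cdot C_1=L_2\cdot C_1=2$. By Lemma~\ref{lemma:lines} the line $L_1$ lies in $\mathrm{Supp}(D)$, so by Lemma~\ref{lemma:assumption-support} the avoided component $C_\star$ is either $L_2$ or $C_1$. I write
$$
D = a L_1 + b C_1 + c L_2 + \Omega,
$$
with $a>0$, $b,c\geqslant 0$, $bc=0$, and $\Omega$ effective with support containing none of $L_1,L_2,C_1$; set $n=\mathrm{mult}_P(\Omega)$. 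The identities $\Omega\cdot L_1 = 1+2a-c-2b$, $\Omega\cdot C_1 = 2-2a-2c+2b$, $\Omega\cdot L_2 = 1-a+2c-2b$, together with $\Omega\cdot L_1,\,\Omega\cdot C_1\geqslant n$ (since $L_1,C_1$ are smooth at $P$) and $\Omega\cdot L_2\geqslant 0$, will feed all subsequent numerical estimates.

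The sub-case $C_\star=C_1$ (so $b=0$) is straightforward: $\Omega\cdot C_1=2-2a-2c\geqslant n$ forces $m=a+n\leqslant 2-a-2c\leqslant 2$, so Lemma~\ref{lemma:O-E-F} and Lemma~\ref{lemma:point-not-on-exceptional} together force $Q\in\widetilde{T}_P$, and since $P\notin L_2$ we have $Q\in(\widetilde{L}_1\cup\widetilde{C}_1)\cap E$. If $Q\in\widetilde{C}_1\cap E\setminus\widetilde{L}_1$, Lemma~\ref{lemma:Skoda} applied to $(\widetilde{S}_4,\lambda\widetilde{\Omega}+(\lambda m-1)E)$, together with $\mathrm{mult}_Q\widetilde{\Omega}\leqslant\widetilde{C}_1\cdot\widetilde{\Omega}=2-2a-2c-n$, yields $\lambda(2-a-2c)>2$, impossible. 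The case $Q\in\widetilde{L}_1\cap E$ is dispatched by Theorem~\ref{theorem:Trento} applied to $\widetilde{L}_1$ and $E$, precisely as in the proof of Lemma~\ref{lemma:C1-C2}.

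In the sub-case $C_\star=L_2$ (so $c=0$), $\Omega\cdot L_2\geqslant 0$ becomes $a+2b\leqslant 1$, and averaging the other two intersection numbers gives $n\leqslant 3/2$. If $\lambda n\leqslant 1$, I apply Theorem~\ref{theorem:Trento} on $S_4$ to $(S_4,\lambda a L_1+\lambda b C_1+\lambda\Omega)$ at $P$ (assuming transversality of $L_1$ and $C_1$ at $P$, the tangential case being essentially the same); the two alternatives force either $a>5/6$ (hence $b\leqslant 1/12$) or $b>1/3$ (hence $a<1/3$), and each possibility yields $m<2$, reducing to the previous sub-case. If instead $\lambda n>1$, then $n>4/3$ and consequently $1/6<a-b<1/3$. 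On $\widetilde{S}_4$, Theorem~\ref{theorem:adjunction} applied to $\widetilde{L}_1$ at $Q\in\widetilde{L}_1\cap E\setminus\widetilde{C}_1$ gives $a>5/9+b/3$, contradicting $a-b<1/3$ and $a+2b\leqslant 1$; the tangency subcase $Q\in\widetilde{L}_1\cap\widetilde{C}_1\cap E$ produces the sharper $a>5/9$, again contradictory; and $Q\in\widetilde{C}_1\cap E\setminus\widetilde{L}_1$ leads to $a<3b-2/3$, inconsistent with $a>b+1/6$ and $a+2b\leqslant 1$.

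The main obstacle is the remaining possibility in $C_\star=L_2$, $\lambda n>1$, that $Q\notin\widetilde{L}_1\cup\widetilde{C}_1$: here Lemma~\ref{lemma:point-not-on-exceptional} does apply (one checks $m\leqslant 9/4<2/\lambda$ and $m+\widetilde{m}\leqslant 9/4+3/2=15/4<3/\lambda$), forcing $O=\overline{E}\cap F$, but Theorem~\ref{theorem:adjunction} on $\overline{E}$ only delivers $\lambda m>3/2$, which does not contradict $m\leqslant 9/4$ when $\lambda\in(2/3,3/4)$. To close this gap I will apply Theorem~\ref{theorem:Trento} on $\overline{S}_4$ to the pair $\overline{E},F$ at $O$ with $\Delta=\lambda\overline{\Omega}$ locally, using $\overline{E}\cdot\overline{\Omega}=n-\widetilde{m}$ and $F\cdot\overline{\Omega}=\widetilde{m}$: the first alternative gives $\lambda(n+\widetilde{m}+2m)>6$, impossible since $\lambda(n+\widetilde{m}+2m)\leqslant\tfrac{3}{4}\cdot\tfrac{15}{2}=\tfrac{45}{8}<6$, and the second alternative $\lambda(\widetilde{m}+2m)>4$ I expect to eliminate via a third blow-up together with a direct bound on $\mathrm{mult}_O(\overline{\Omega})$. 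This final piece should be the longest and most technical part of the proof.
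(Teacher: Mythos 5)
Your proposal diverges from the paper's in a way that creates a genuine gap precisely where you flag one. The paper does not split on $\lambda n\leqslant 1$ vs.\ $\lambda n>1$; it first rules out $Q\in\widetilde{L}_1$ and $Q\in\widetilde{C}_1$ directly via Theorem~\ref{theorem:Trento} applied on $\widetilde{S}_4$ to the pairs of curves $\{\widetilde{L}_1,E\}$ and $\{\widetilde{C}_1,E\}$ (which always meet transversally, so the transversality issue you waved away never arises), and then handles the residual case $Q\notin\widetilde{L}_1\cup\widetilde{C}_1$ with a single application of Theorem~\ref{theorem:adjunction} on $\overline{S}_4$.

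Your claimed "main obstacle" rests on a miscomputation. You assert that adjunction along $\overline{E}$ at $O=\overline{E}\cap F$ only gives $\lambda m>\tfrac{3}{2}$. That is what you get if you compute with the global quantity $\overline{D}\cdot\overline{E}=m-\widetilde{m}$. But since $Q\notin\widetilde{L}_1\cup\widetilde{C}_1$, the curves $\overline{L}_1$ and $\overline{C}_1$ do not pass through $O$, so in a neighbourhood of $O$ the log pair~\eqref{equation:log-pull-back-2} coincides with
$$
\Big(\overline{S}_4,\ \lambda\overline{\Omega}+\big(\lambda(a+b+n)-1\big)\overline{E}+\big(\lambda(a+b+n+\widetilde{n})-2\big)F\Big),
$$
and the relevant intersection is $\overline{\Omega}\cdot\overline{E}=n-\widetilde{n}$. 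Theorem~\ref{theorem:adjunction} on $\overline{E}$ then gives $\lambda(n-\widetilde{n})+\lambda(a+b+n+\widetilde{n})-2>1$, i.e.\ $a+b+2n>3/\lambda>4$. On the other hand, the constraints $n-2a+2b\leqslant 1$, $n+2a-2b\leqslant 2$ and $a+2b\leqslant 1$ give $a+b+2n\leqslant\tfrac{15}{4}<4$. So the contradiction already drops out of adjunction alone; the Trento-on-$\overline{S}_4$-plus-third-blow-up machinery you sketch for the "second alternative" is both unnecessary and, as you admit, unfinished. Until that last step is replaced by the correct local computation, your argument for case~\eqref{line+conic} is incomplete.

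A secondary issue: in the sub-case $C_\star=L_2$, $\lambda n\leqslant 1$, you apply Theorem~\ref{theorem:Trento} on $S_4$ at $P$ to the curves $L_1$ and $C_1$, which requires them to meet transversally at $P$. You acknowledge the tangential possibility but dismiss it as "essentially the same" without argument. The paper avoids this by only ever invoking Trento on the blow-up with $E$ as one of the two curves, where transversality is automatic; if you keep your version you need to actually handle the tacnodal case.
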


\begin{proof}
Suppose that we are in the case \eqref{line+conic}.
Then  $\mult_P(T_P)=2$, the curve $T_P$ consist of a conic curve $C_1$ and two lines $L_1$ and $L_2$,
and the point $P$ is the intersection point of the conic with the line $L_1$.
Thus, we have the following picture:
\begin{center}
\begin{tikzpicture}[scale=1.3]
\draw (0.2,-0.2) --(2.2,2.5);
\draw (-0.26,2.7) --(2,0);
\draw (0.57,1.73) to [out=0, in=90] (2,1) to [out=-90, in=0](0.57,0.2) to [out=180, in=-90] (-0.86,1) to [out=90, in=180] (0.57,1.73);
\node at (0.63,1.32) {$P$};
\node at (-0.5,2.32) {$L_1$};
\node at (2.6,2.32) {$L_2$};
\node at (-1.3,1) {$C_1$};
\draw [fill] (0.57,1.73) circle [radius=0.06];
\end{tikzpicture}
\end{center}

By Lemma~\ref{lemma:lines}, the line $L_1$ is contained in the support of the divisor $D$.
In particular, $C_\star\ne L_1$. Thus, either $C_\star=L_2$ of $C_\star=C_1$.
Write $D=\Omega+aL_1+bC_1$, where $a$ is a positive rational number, $b$ is a non-negative rational number,
and $\Omega$ is an effective $\mathbb{Q}$-divisor whose support does not contain the curves $L_1$ and $C_1$.
If $b>0$, then the support of $\Omega$ does not contain the line $L_2$, which implies that
$$
1-a-2b=\Big(H-aL_1-bC_1\Big)\cdot L_2=\Omega\cdot L_2\geqslant 0.
$$
Hence, either $b=0$ or $a+2b\leqslant 1$ (or both), so that $a+2b\leqslant 1$, because $a\leqslant 1$ by Lemma~\ref{lemma:Pukhlikov}.

Put $n=\mult_P(\Omega)$. Then
$$
n\leqslant\Omega\cdot L_1=\Big(H-aL_1-bC_1\Big)\cdot L_1=1+2a-2b.
$$
Similarly, we see that
$$
n\leqslant\Omega\cdot C_1=\Big(H-aL_1-bC_1\Big)\cdot C_1=2-2a+2b.
$$
Adding these inequalities, we get $n\leqslant\frac{3}{2}$.
This gives
$m=n+a+b\leqslant n+a+2b\leqslant\frac{5}{2}<\frac{2}{\lambda}$,
because $\lambda>\frac{3}{4}$ by \eqref{equation:lambda-3-4}.

Denote by $\widetilde{\Omega}$ the proper transform of the divisor $\Omega$ on the surface $\widetilde{\Omega}$.
Similarly, denote by $\widetilde{L}_1$ and $\widetilde{C}_1$ the proper transform of the curves $L_1$ and $C_1$ on the surface $\widetilde{\Omega}$, respectively.
Then we can rewrite the log pair \eqref{equation:log-pull-back} as
$$
\Big(\widetilde{S}_4,\lambda a\widetilde{L}_1+\lambda b\widetilde{C}_1+\lambda\widetilde{\Omega}+\big(\lambda(a+b+n)-1\big)E\Big).
$$
Since  $m<\frac{2}{\lambda}$, this log pair is log canonical at every point of $E$ that is different from $Q$ by Corollary~\ref{corollary:log-pull-back}.
Put $\widetilde{n}=\mathrm{mult}_{Q}(\widetilde{\Omega})$. Then $\widetilde{n}\leqslant n$.

Let us show that $Q\not\in\widetilde{L}_1$. Suppose that $Q\in\widetilde{L}_1$. Then
$$
\widetilde{n}\leqslant\widetilde{\Omega}\cdot\widetilde{L}_1=\Omega\cdot L_1-n=1+2a-2b-n,
$$
which implies that $2\widetilde{n}\leqslant\widetilde{n}+n\leqslant 1+2a-2b$.
But we already know that $\widetilde{n}\leqslant n\leqslant 2-2a+2b$.
Adding these two inequalities together, we get $\widetilde{n}\leqslant 1$.
If $Q\in\widetilde{C}_1$, then we also have
$$
\widetilde{n}\leqslant\widetilde{\Omega}\cdot\widetilde{C}_1=\Omega\cdot C_1-n=2-2a+2b-n,
$$
which implies that $2\widetilde{n}\leqslant\widetilde{n}+n\leqslant 2-2a+2b$.
Thus, if $Q\in\widetilde{C}_1$, then
$$
\widetilde{n}\leqslant\frac{1}{4}\Big(\big(1+2a-2b\big)+\big(2-2a+2b\big)\Big)\leqslant\frac{3}{4}.
$$
Keeping in mind that $a+2b\leqslant 1$, we conclude that $\widetilde{n}+b\leqslant\frac{5}{4}$ provided that $Q\in\widetilde{C}_1$.
In particular, the multiplicity of the $\mathbb{Q}$-divisor
$\lambda b\widetilde{C}_1+\lambda\widetilde{\Omega}$ at the point $Q$ does not exceed $1$, since $\lambda<\frac{3}{4}$ by \eqref{equation:lambda-3-4}.
Hence, we can apply Theorem~\ref{theorem:Trento} to \eqref{equation:log-pull-back} and the curves $E$ and $\widetilde{L}_1$.
This gives either
$$
\lambda+2\lambda a-\lambda b-\lambda n=\big(\lambda b\widetilde{C}_1+\lambda\widetilde{\Omega}\big)\cdot\widetilde{L}_1>2\Big(1-\big(\lambda(a+b+n\big)-1\Big)
$$
or
$$
\lambda b+\lambda n=\lambda b+\lambda\widetilde{\Omega}\cdot E=\big(\lambda\widetilde{C}_1+\lambda\widetilde{\Omega}\big)\cdot E>2\big(1-\lambda a\big)
$$
(or both). Since $\lambda<\frac{3}{4}$ by \eqref{equation:lambda-3-4},
this gives either $4a+b+n>\frac{13}{3}$ or $2a+b+n>\frac{8}{3}$ (or both).
On the other hand, we already proved that $n\leqslant 2-2a+2b$ and $a+2b\leqslant 1$. Thus, we have
$$
4a+b+n=\big(2a-2b+n\big)+2\big(a+2b\big)\leqslant 4<\frac{13}{3},
$$
which implies that $2a+b+n>\frac{8}{3}$. This gives
$$
\frac{8}{3}<2a+b+n\leqslant 2+3b,
$$
because $n\leqslant 2-2a+2b$. Hence, we obtain $b>\frac{2}{9}$.
On the other hand, applying Theorem~\ref{theorem:adjunction} to the log pair \eqref{equation:log-pull-back} and the curve $\widetilde{L}_1$, we obtain
\begin{multline*}
\lambda+3\lambda a-1=\lambda\big(\Omega\cdot L_1-n\big)+\lambda a+2\lambda b+\lambda n-1=\lambda\widetilde{\Omega}\cdot\widetilde{L}_1+\lambda a+2\lambda b+\lambda n-1=\\
=\Big(\lambda b\widetilde{C}_1+\lambda\widetilde{\Omega}+\big(\lambda(a+b+n)-1\big)E\Big)\cdot \widetilde{L}_1>1,
\end{multline*}
which results in $a>\frac{2}{\lambda}-1$.
Since $\lambda>\frac{3}{4}$, we have $a>\frac{5}{9}$.
But $a+2b\leqslant 1$, so that $b\leqslant\frac{2}{9}$.
The obtained contradiction shows that the curve $\widetilde{L}_1$ does not contain the point $Q$.

Let us show that  the curve $\widetilde{C}_1$ does not contain the point $Q$.
Indeed, suppose it does. Then
$$
\widetilde{n}\leqslant\widetilde{\Omega}\cdot\widetilde{C}_1=\Omega\cdot C_1-n=2-2a+2b-n,
$$
which implies that $2\widetilde{n}\leqslant\widetilde{n}+n\leqslant 2-2a+2b$.
But $\widetilde{n}\leqslant n\leqslant\Omega\cdot L_1=1+2a-2b$, we see that
$$
3\widetilde{n}\leqslant\big(1+2a-2b\big)+\big(2-2a+2b\big)=3,
$$
which implies $\widetilde{n}\leqslant 1$.
On the other hand, the log pair
$(\widetilde{S}_4,\lambda b\widetilde{C}_1+\lambda\widetilde{\Omega}+(\lambda(a+b+n)-1)E)$
is not log canonical at the point $Q$, because $Q\not\in\widetilde{L}_1$.
Moreover, we can apply Theorem~\ref{theorem:Trento} to this log pair, because $\widetilde{n}\leqslant 1$ and $\lambda<\frac{3}{4}$.
This gives
$$
\lambda\Big(2-2a+2b-n\Big)=\lambda\Big(\Omega\cdot C_1-n\Big)=\lambda\widetilde{\Omega}\cdot\widetilde{C}_1>2\Big(1-\big(\lambda(a+b+n\big)-1\Big)
$$
or $\lambda n=\lambda\widetilde{\Omega}\cdot E>2(1-\lambda b)$.
The former inequality gives $4b+n>\frac{4}{\lambda}-2$,
and the later inequality gives $2b+n>\frac{2}{\lambda}$.
Since $\lambda<\frac{3}{4}$, we see that either $4b+n>\frac{10}{3}$ or $2b+n>\frac{8}{3}$ (or both).
But $n\leqslant\Omega\cdot L_1=1+2a-2b$ and $a+2b\leqslant 1$, which implies that
$$
4b+n\leqslant 1+2a+2b\leqslant 3<\frac{10}{3}.
$$
Thus, we have $2b+n>\frac{8}{3}$.
One the other hand, we already know that $n+2b-2a\leqslant 1$,  $n+2b-2a\leqslant 2$, and $a+2b\leqslant 1$,
so that
$$
n+2b=\frac{2}{3}\big(n+2b-2a\big)+\frac{1}{3}\big(n+2b-2a\big)+\frac{2}{3}\big(a+2b\big)\leqslant \frac{2}{3}+\frac{2}{3}+\frac{2}{3}=2,
$$
which is a contradiction. This shows that $Q\not\in\widetilde{C}_1$.

Denote by $\overline{\Omega}$ the proper transform of the divisor $\Omega$ on the surface $\overline{S}_4$.
Recall that the log pair \eqref{equation:log-pull-back-2} is not log canonical at the point $O\in F$.
Moreover, it is log canonical at every point of $F$ that is different from $O$ by Corollary~\ref{corollary:log-pull-back-2}, because
$$
m+\widetilde{m}=a+b+n+\widetilde{n}\leqslant a+2b+2n\leqslant 4<\frac{3}{\lambda},
$$
since $a+2b\leqslant 1$, $n\leqslant\frac{3}{2}$ and $\lambda<\frac{3}{4}$.
Then $O=F\cap\overline{E}$ by Lemma~\ref{lemma:point-not-on-exceptional}.

Since $Q\not\in\widetilde{L}_1\cup\widetilde{C}_1$, we see that the log pair
$$
\Big(\overline{S}_4, \lambda\overline{\Omega}+\big(\lambda(a+b+n)-1\big)\overline{E}+\big(\lambda(a+b+n+\widetilde{n})-2\big)F\Big)
$$
is not log canonical at the point $O\in F$ and is log canonical in all other points of the curve $F$.
Applying Theorem~\ref{theorem:adjunction} to this log pair and the curve $\overline{E}$, we get
\begin{multline*}
\lambda\big(a+b+2n)-2=\lambda\big(n-\widetilde{n}\big)+\lambda(a+b+n+\widetilde{n})-2=\\
=\lambda\overline{\Omega}\cdot\overline{E}+\lambda(a+b+n+\widetilde{n})-2=\Big(\lambda\overline{\Omega}+\big(\lambda(a+b+n+\widetilde{n})-2\big)F\Big)\cdot\overline{E}>1
\end{multline*}
which implies that $a+b+2n>\frac{3}{\lambda}>4$.
On the other hand,  $n+2b-2a\leqslant 1$,  $n+2b-2a\leqslant 2$l and $a+2b\leqslant 1$. Thus, we have
$$
n+a+b=\frac{11}{12}\big(n+2b-2a\big)+\frac{13}{12}\big(n+2b-2a\big)+\frac{2}{3}\big(a+2b\big)\leqslant \frac{11}{12}+\frac{13}{6}+\frac{2}{3}=\frac{15}{4}<4,
$$
which is a contradiction.
\end{proof}

\begin{lemma}
\label{lemma:C4}
The case \eqref{cubic} is impossible.
\end{lemma}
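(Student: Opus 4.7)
The plan is to mirror the strategy of Lemma \ref{lemma:C3}. Since $L$ is a line in $S_4$ through $P$, Lemma \ref{lemma:lines} forces $L\subset\Supp(D)$, so Lemma \ref{lemma:assumption-support} leaves the cubic $C_1$ as the excluded component: $C_\star=C_1$. Write $D=aL+\Omega$, where $a>0$ and $\Supp(\Omega)$ contains neither $L$ nor $C_1$. Using the linear equivalence $L+C_1\sim H$, the smoothness of $S_4$ (so $K_{S_4}\sim 0$, $L^2=-2$, $L\cdot C_1=3$, $C_1^2=0$), together with $a\leqslant 1$ from Lemma \ref{lemma:Pukhlikov}, one extracts the two key intersection bounds
\[
n\leqslant\Omega\cdot L=1+2a\qquad\text{and}\qquad n\leqslant\Omega\cdot C_1=3-3a,
\]
where $n=\mult_P(\Omega)$. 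These combine to give $m=a+n\leqslant\frac{11}{5}$, and since $\lambda<\frac{3}{4}$ one has $m<\frac{2}{\lambda}$, so Corollary \ref{corollary:log-pull-back} isolates a unique non-log-canonical point $Q\in E$ for the log pair \eqref{equation:log-pull-back}. The argument then splits according to whether $Q\in\widetilde{L}$, or $Q\in\widetilde{C}_1\setminus\widetilde{L}$, or $Q\notin\widetilde{L}\cup\widetilde{C}_1$.

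For $Q\in\widetilde{L}$, first apply Theorem \ref{theorem:adjunction} at $Q$ with the smooth curve $\widetilde{L}$ of coefficient $\lambda a<1$; using $\widetilde{L}\cdot\widetilde{\Omega}=1+2a-n$ and $\widetilde{L}\cdot E=1$ at $Q$, the resulting inequality $\lambda(1+3a)>2$ forces $a>\frac{5}{9}$. Consequently $\widetilde{n}\leqslant n\leqslant 3-3a<\frac{4}{3}<\frac{1}{\lambda}$, so the multiplicity hypothesis $\lambda\widetilde{n}\leqslant 1$ of Theorem \ref{theorem:Trento} is secured, and one applies that theorem to the transversal curves $\widetilde{L}$ and $E$. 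Its two alternatives reduce to either $4a+n>\frac{13}{3}$ (contradicting $4a+n\leqslant a+3\leqslant 4$ via $n\leqslant 3-3a$) or $2a+n>\frac{8}{3}$ (contradicting $2a+n\leqslant\min(1+4a,3-a)\leqslant\frac{13}{5}$ via both bounds on $n$).

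For $Q\in\widetilde{C}_1\setminus\widetilde{L}$, the smoothness of $C_1$ at $P$ pins $Q$ to the unique point $\widetilde{C}_1\cap E$, corresponding to the tangent direction of $C_1$ at $P$. One applies Theorem \ref{theorem:adjunction} at $Q$ to the curve $\widetilde{C}_1$ (coefficient $0$), using $\widetilde{C}_1\cdot\widetilde{\Omega}=3-3a-n$ and $\widetilde{C}_1\cdot E=1$ at $Q$, to obtain $\lambda(3-2a)>2$, hence $a<\frac{1}{6}$. A second application of Theorem \ref{theorem:adjunction} at $Q$, now with $E$ of coefficient $\lambda m-1\leqslant 1$, then produces $\lambda n>1$, i.e.\ $n>\frac{1}{\lambda}>\frac{4}{3}$, which is incompatible with $n\leqslant 1+2a<\frac{4}{3}$.

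For the remaining subcase $Q\notin\widetilde{L}\cup\widetilde{C}_1=\widetilde{T}_P$, the estimates $m<\frac{2}{\lambda}$ and $m+\widetilde{m}=a+n+\widetilde{n}\leqslant a+2n\leqslant 4<\frac{3}{\lambda}$ place one in the scope of Lemma \ref{lemma:point-not-on-exceptional}, forcing $O=\overline{E}\cap F$ after the second blow-up. Applying Theorem \ref{theorem:adjunction} at $O$ to the log pair \eqref{equation:log-pull-back-2} with $\overline{E}$ of coefficient $\lambda m-1\leqslant 1$, and using $\overline{E}\cdot\overline{\Omega}=n-\widetilde{n}$ together with $\overline{E}\cdot F=1$, one derives $\lambda(a+2n)>3$, i.e.\ $a+2n>4$; this contradicts the sharp bound $a+2n\leqslant 4$ (attained only at $a=\frac{2}{5},n=\frac{9}{5}$) that comes from $n\leqslant\min(1+2a,3-3a)$. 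The main obstacle I anticipate is the preliminary adjunction in the first subcase: without the prior deduction $a>\frac{5}{9}$, the multiplicity hypothesis of Theorem \ref{theorem:Trento} is not obviously available, and it is this bootstrapping that makes its two alternatives fall below the intersection-theoretic bounds on $a$ and $n$.
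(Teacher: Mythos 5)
Your proof is correct and follows the same overall blueprint as the paper's: set up $D=aL+\Omega$ with the two intersection bounds $n\leqslant 1+2a$ and $n\leqslant 3-3a$, bound $m=a+n\leqslant\frac{11}{5}<\frac{2}{\lambda}$, split on the location of $Q$, and in the residual subcase push to the second blow-up and force $O=\overline{E}\cap F$, giving the contradiction $a+2n>\frac{3}{\lambda}>4$ against $a+2n\leqslant 4$. That final subcase is identical to the paper's.

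The two earlier subcases are handled a little differently, and both of your variants work. For $Q\in\widetilde{L}$, the paper secures the multiplicity hypothesis of Theorem~\ref{theorem:Trento} by a direct linear combination $8\widetilde{n}=2(\widetilde{n}+3a)+3(2\widetilde{n}-2a)\leqslant 9$ giving $\widetilde{n}\leqslant\frac{9}{8}$, whereas you first apply Theorem~\ref{theorem:adjunction} along $\widetilde{L}$ to get $a>\frac{5}{9}$ and hence $\widetilde{n}\leqslant n\leqslant 3-3a<\frac{4}{3}$; your closing arithmetic $4a+n\leqslant a+3\leqslant 4$ and $2a+n\leqslant\min(1+4a,3-a)\leqslant\frac{13}{5}$ is clean and avoids the paper's slip ``$n+4a\leqslant n+3a$.'' For $Q\in\widetilde{C}_1\setminus\widetilde{L}$, the paper first establishes $a>\frac{1}{6}$ by adjunction along $L$ on $S_4$ and then contradicts it via the estimate $m+\widetilde{m}\leqslant 3-2a$ against \eqref{equation:m-m-big}; you instead derive $a<\frac{1}{6}$ by adjunction along $\widetilde{C}_1$ (with coefficient $0$, which is legitimate since the theorem only requires $a_1\leqslant 1$) and then $\lambda n>1$ by adjunction along $E$, getting the incompatibility $\frac{4}{3}<n<\frac{4}{3}$. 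This makes the auxiliary bound $a>\frac{1}{6}$ unnecessary, at the small cost of two adjunction applications in that subcase. Both routes buy the same contradiction; yours is arithmetically tidier, the paper's is structurally more uniform with the neighbouring lemmas.
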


\begin{proof}
Suppose that we are in the case \eqref{cubic}.
Then $\mult_P(T_P)=2$ and $T_P$ consists of a cubic curve $C_1$ and a line $L$, and $P$ is their intersection at a smooth point of the cubic curve.
Thus, we have the following picture:
\vspace{-0.05cm}
\begin{center}
\begin{tikzpicture}[scale=1.3]
\draw  (0.5,2) to [out=-100,in=120] (0.6,1.3) to [out=-60,in=-90]   (3.2,1.3) to [out=90,in=90] (2,1.3)  to [out=-90,in=100] (3,0);
\node at (0.3,1.3) {$P$};
\node at (3.4,0.3) {$C_1$};
\node at (-1.2,0.4) {$L$};
\draw (-1.1,0) --(1.8,2.2);
\draw [fill] (0.6,1.3) circle [radius=0.06];
\end{tikzpicture}
\end{center}

By Lemma~\ref{lemma:lines}, the line $L_1$ is contained in the support of the divisor $D$, so that $C_\star=C_1$.
Write $D=\Omega+aL_1$, where $a$ is a positive rational number,
and $\Omega$ is an effective $\mathbb{Q}$-divisor whose support does not contain the line $L_1$.
Put $n=\mult_P(\Omega)$. Then
$$
n\leqslant\Omega\cdot L_1=\Big(H-aL_1\Big)\cdot L_1=1+2a,
$$
which gives $n-2a\leqslant 1$. Similarly, we obtain  $n+3a\leqslant 3$, because
$$
n\leqslant\Omega\cdot C_1=\Big(H-aL_1\Big)\cdot C_1=3-3a.
$$
We see that $n+a=\frac{2}{5}(n-2a)+\frac{3}{5}(n+3a)\leqslant\frac{11}{5}$,
which implies that $m=n+a<\frac{2}{\lambda}$, because $\lambda>\frac{3}{4}$.
Thus, it follows from Corollary~\ref{corollary:log-pull-back} that
the log pair \eqref{equation:log-pull-back} is log canonical at every point of $E$ that is different from $Q$.

Note that $a\leqslant 1$ by Lemma~\ref{lemma:Pukhlikov}. This also follows from $n+3a\leqslant 3$.
We also know that $a>0$. In fact, one can show that $a>\frac{1}{6}$.
Indeed, we have $\lambda\big(1+2a\big)=\lambda\Omega\cdot L_1>1$ by Theorem~\ref{theorem:adjunction}.
This gives $a>\frac{1}{6}$, since $\lambda>\frac{3}{4}$.

Denote by $\widetilde{\Omega}$ the proper transform of the divisor $\Omega$ on the surface $\widetilde{\Omega}$.
Similarly, denote by $\widetilde{L}_1$ the proper transform of the line $L_1$ on the surface $\widetilde{\Omega}$.
Then we can rewrite the log pair \eqref{equation:log-pull-back} as
$(\widetilde{S}_4,\lambda a\widetilde{L}_1+\lambda\widetilde{\Omega}+(\lambda(a+n)-1)E)$.
Put $\widetilde{n}=\mathrm{mult}_{Q}(\widetilde{\Omega})$. Then $\widetilde{n}\leqslant n$.

Suppose that $Q\in\widetilde{L}_1$. Then
$$
\widetilde{n}\leqslant\widetilde{\Omega}\cdot\widetilde{L}_1=\Omega\cdot L_1-n=1+2a-n,
$$
which implies that $2\widetilde{n}\leqslant\widetilde{n}+n\leqslant 1+2a$.
Since  $\widetilde{n}\leqslant n$ and $n+3a\leqslant 3$, we have $\widetilde{n}+3a\leqslant 3$.
Thus, we have $8\widetilde{n}=2(\widetilde{n}+3a)+3(2\widetilde{n}-2a)\leqslant 9$,
which gives $\widetilde{n}\leqslant\frac{9}{8}$. Then $\lambda\widetilde{n}\leqslant 1$.
Hence, we can apply Theorem~\ref{theorem:Trento} to the log pair \eqref{equation:log-pull-back} and the curves $E$ and $\widetilde{L}_1$.
This gives
$$
\lambda+2\lambda a-\lambda n=\lambda\widetilde{\Omega}\cdot\widetilde{L}_1>2\Big(1-\big(\lambda(a+n\big)-1\Big)
$$
or $\lambda n=\lambda\widetilde{\Omega}\cdot E>2(1-\lambda a)$.
Since $\lambda\leqslant\frac{3}{4}$ by \eqref{equation:lambda-3-4}, the former inequality gives
$n+4a>\frac{4}{\lambda}-1>\frac{13}{3}$,
and the later inequality gives
$n+2a>\frac{4}{\lambda}>\frac{8}{3}$.
Each of these inequalities leads to a contradiction, because $n-2a\leqslant 1$ and $n+3a\leqslant 3$.
Indeed, we have
$$
n+2a=\frac{1}{5}\big(n-2a\big)+\frac{4}{5}\big(n+3a\big)\leqslant \frac{1}{5}+\frac{12}{5}=\frac{13}{5}<\frac{8}{3}.
$$
Similarly, $n+4a\leqslant n+3a\leqslant 3\leqslant\frac{13}{3}$.
This shows that $\widetilde{L}_1$ does not contain the point $Q$.

Let us show that  $Q\not\in \widetilde{C}_1$.
Suppose $Q\in\widetilde{C}_1$. Then
$$
3-3a-n=\Omega\cdot C_1-n=\widetilde{\Omega}\cdot\widetilde{C}_1\geqslant\widetilde{n},
$$
which implies $n+a+\widetilde{n}\leqslant 3-2a$. Thus, we have
$$
3-2a\geqslant a+n+\widetilde{n}=m+\widetilde{m}>\frac{8}{3}
$$
by \eqref{equation:m-m-big}. This gives $a<\frac{1}{6}$. But we already proved that $a>\frac{1}{6}$.
This shows that $Q\not\in\widetilde{C}_1$.

Recall that $n-2a\leqslant 1$ and $n+3a\leqslant 3$.
Adding these two inequalities together, we obtain
$m+\widetilde{m}=a+n+\widetilde{n}\leqslant a+2n\leqslant 4<\frac{3}{\lambda}$, since $\lambda<\frac{3}{4}$.
Thus, Corollary~\ref{corollary:log-pull-back-2} implies that the log pair \eqref{equation:log-pull-back-2}
is log canonical at every point of the curve $F$ that is different from $O$.
By Lemma~\ref{lemma:point-not-on-exceptional}, we have $O=F\cap\overline{E}$, because
$m<\frac{2}{\lambda}$, $m+\widetilde{m}<\frac{3}{\lambda}$ and $Q\not\in\widetilde{L}_1\cup\in\widetilde{C}_1$.

Denote by $\overline{\Omega}$ the proper transform of the divisor $\Omega$ on the surface $\overline{S}_4$.
Then the log pair
$(\overline{S}_4, \lambda\overline{\Omega}+(\lambda(a+n)-1)\overline{E}+(\lambda(a+n+\widetilde{n})-2)F)$
coincides with the log pair \eqref{equation:log-pull-back-2} in a neighborhood of the point $O$, because $Q\not\in\widetilde{L}_1$.
Applying Theorem~\ref{theorem:adjunction} to this log pair and the curve $\overline{E}$, we get
$$
\lambda\big(a+2n)-2=\lambda\overline{\Omega}\cdot\overline{E}+\lambda(a+n+\widetilde{n})-2=\Big(\lambda\overline{\Omega}+\big(\lambda(a+n+\widetilde{n})-2\big)F\Big)\cdot\overline{E}>1
$$
which implies that $a+2n>\frac{3}{\lambda}$. But we already proved that $n-2a\leqslant 1$ and $n+3a\leqslant 3$.
Thus, we have
$a+2n\leqslant 4<\frac{3}{\lambda}$, because $\lambda>\frac{3}{4}$.
This is a contradiction.
\end{proof}

\begin{lemma}
\label{lemma:C5}
The case \eqref{smooth-cubic+line} is impossible.
\end{lemma}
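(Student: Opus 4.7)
The plan is to use the standard case analysis for $T_P$. In case \eqref{smooth-cubic+line}, $\mult_P(T_P) = 2$ and $T_P = C_1 + L$ with $C_1$ an irreducible cubic having a double point (node or ordinary cusp) at $P$ and $L$ a line that does not pass through $P$. Since $P \notin L$, Lemma~\ref{lemma:lines} imposes no constraint on $L$, so by Lemma~\ref{lemma:assumption-support} the component $C_\star$ is either $L$ or $C_1$; I would treat these separately.

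In the case $C_\star = C_1$, I would write $D = bL + \Omega$ with $\Omega$ supported away from $C_1$ and $L$. Since $P \notin L$, one has $m = \mult_P(\Omega)$, and the bound $3 = D \cdot C_1 \geqslant (D \cdot C_1)_P \geqslant 2m$ yields $m \leqslant 3/2$. Lemma~\ref{lemma:O-E-F} then forces $O \ne \overline{E} \cap F$. If $Q \in \widetilde{T}_P$, then $Q \in \widetilde{C}_1$ (as $\widetilde{L}$ is disjoint from $E$), and the local inequality $\widetilde{m} \leqslant (\widetilde{D} \cdot \widetilde{C}_1)_Q \leqslant 3 - 2m$ combined with \eqref{equation:m-m-big} forces $m < 1/3$, contradicting \eqref{equation:m-big}; if $Q \notin \widetilde{T}_P$, then Lemma~\ref{lemma:point-not-on-exceptional} forces $O = \overline{E} \cap F$, a contradiction.

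In the case $C_\star = L$, I would write $D = cC_1 + \Omega$ with $\Omega$ free of $C_1$ and $L$; intersecting with $L$ and $C_1$ gives $c \leqslant 1/3$ and $n := \mult_P(\Omega) \leqslant 3/2$, so $m = 2c + n \leqslant 13/6$. I would split on whether $Q \in \widetilde{C}_1$. If $Q \notin \widetilde{C}_1$, then $Q \notin \widetilde{T}_P$, and Lemma~\ref{lemma:point-not-on-exceptional} places $O = \overline{E} \cap F$; since $\overline{C}_1$ misses $O$ in this situation, Theorem~\ref{theorem:adjunction} applied to \eqref{equation:log-pull-back-2} with the curve $\overline{E}$ yields $c + n > 3/(2\lambda) > 2$, contradicting $c + n \leqslant 11/6$. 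If $Q \in \widetilde{C}_1$ and $P$ is a node, so that $(\widetilde{C}_1 \cdot E)_Q = 1$, two applications of Theorem~\ref{theorem:adjunction} on $\widetilde{S}_4$ (first with $\widetilde{C}_1$, then with $E$) produce $n < 2c + 1/3$ and $c + n > 1/\lambda > 4/3$, which together with $c \leqslant 1/3$ force $n < 1 < n$. The genuinely hard subcase is the ordinary cusp: here $\widetilde{C}_1$ is tangent to $E$ at $Q$ with $(\widetilde{C}_1 \cdot E)_Q = 2$, so Theorem~\ref{theorem:Trento} does not apply on $\widetilde{S}_4$ and one must pass to $\overline{S}_4$. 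The key observation is that the proper transform $\overline{C}_1$ is smooth at $O_\infty := \overline{E} \cap F$ and meets both $\overline{E}$ and $F$ transversally there, so that three curves meet concurrently at $O_\infty$ with distinct tangent directions. If $O = O_\infty$, applying Theorem~\ref{theorem:adjunction} to $\overline{C}_1$ in \eqref{equation:log-pull-back-2} yields $3 + 5c > 4/\lambda > 16/3$, i.e., $c > 7/15$, contradicting $c \leqslant 1/3$. If $O \ne O_\infty$, then both $\overline{C}_1$ and $\overline{E}$ miss $O$, and Theorem~\ref{theorem:adjunction} applied to $F$ at $O$ forces $\widetilde{n} > 1/\lambda > 4/3$; combined with the local intersection bound $\widetilde{n} \leqslant (\widetilde{C}_1 \cdot \widetilde{\Omega})_Q \leqslant 3 - 2n$, this forces $n < 5/6$, contradicting $n \geqslant \widetilde{n} > 4/3$.
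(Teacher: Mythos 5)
Your proof is correct and uses essentially the same ingredients as the paper: the decomposition $D=aC_1+\Omega$ with the bounds $a\leqslant\frac{1}{3}$ and $n\leqslant\frac{3}{2}$, the reduction lemmas controlling $Q$ and $O$, and the same pair of applications of Theorem~\ref{theorem:adjunction} (to $\overline{C}_1$ at $\overline{E}\cap F$ yielding $5a>\frac{4}{\lambda}-3$, and to $\widetilde{C}_1$ and $E$ on $\widetilde{S}_4$ yielding $n<2a+\frac{1}{3}$ and $a+n>\frac{1}{\lambda}$). The only difference is organizational: you branch explicitly on $C_\star$ and on node versus cusp, whereas the paper treats $a\geqslant 0$ uniformly and deduces the nodal case indirectly by first proving $O\neq\overline{E}\cap F$ and $O\in\overline{C}_1$.
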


\begin{proof}
Suppose that we are in the case \eqref{smooth-cubic+line}.
Then $\mult_P(T_P)=2$ and $T_P$ consists of a cubic curve $C_1$ and a line $L$ such that
$P$ is a singular point of the cubic curve with multiplicity $2$ and does not lie on the line $L$.
Thus, we have the following picture:
\begin{center}
\begin{tikzpicture}[scale=1.3]
\draw  (0.5,2) to [out=-100,in=120] (0.6,1.3) to [out=-60,in=-90]   (3.2,1.3) to [out=90,in=90] (2,1.3)  to [out=-90,in=100] (3,0);
\node at (2.5,1.02) {$P$};
\node at (3.4,0.3) {$C_1$};
\node at (0.3,0.9) {$L$};
\draw (0,0) --(1.8,2.2);
\draw [fill] (2.5,0.6) circle [radius=0.06];
\end{tikzpicture}
\end{center}

Write $D=\Omega+aC_1$, where $a$ is a non-negative rational number,
and $\Omega$ is an effective $\mathbb{Q}$-divisor whose support does not contain the curve $C_1$.
Put $n=\mult_P(\Omega)$. Then $m=n+2a$. If $a>0$, then $C_\star=L_1$, so that
$$
1=D\cdot L_1=\big(\Omega+aC_1\big)\cdot L_1=\Omega\cdot L_1+3a\geqslant 3a,
$$
because $C_\star$ is not contained in the support of the divisor $D$.
Hence, we see that $a\leqslant\frac{1}{3}$.
On the other hand, we have
$$
2n=\mathrm{mult}_{P}\big(C_1\big)\leqslant\Omega\cdot C_1=\big(H-aC_1\big)\cdot C_1=3.
$$
Thus, we have $n\leqslant\frac{3}{2}$.
Then $m=n+2a<\frac{2}{\lambda}$,
because $\lambda>\frac{3}{4}$ by \eqref{equation:lambda-3-4}.
Thus, it follows from Corollary~\ref{corollary:log-pull-back} that
the log pair \eqref{equation:log-pull-back} is log canonical at every point of $E$ that is different from $Q$.

Denote by $\widetilde{\Omega}$ the proper transform of the divisor $\Omega$ on the surface $\widetilde{\Omega}$.
Similarly, denote by $\widetilde{C}_1$ the proper transform of the curve $L_1$ on the surface $\widetilde{\Omega}$.
Then we can rewrite the log pair \eqref{equation:log-pull-back}  as
$(\widetilde{S}_4,\lambda a\widetilde{C}_1+\lambda\widetilde{\Omega}+(\lambda(n+2a)-1)E)$.
Put $\widetilde{n}=\mathrm{mult}_{Q}(\widetilde{\Omega})$. Then $\widetilde{n}\leqslant n$.
If $Q\not\in\widetilde{C}_1$, then $\widetilde{m}=\widetilde{n}$.
If $Q\in\widetilde{C}_1$, then $\widetilde{m}=\widetilde{n}+a$.

Denote by $\overline{\Omega}$ the proper transform of the divisor $\Omega$ on the surface $\overline{S}_4$,
and denote by $\overline{C}_1$ the proper transform of the curve $C_1$ on the surface $\overline{S}_4$.
Then we can rewrite the log pair \eqref{equation:log-pull-back-2} as
$(\overline{S}_4, \lambda a\overline{C}_1+\lambda\overline{\Omega}+(\lambda(n+2a)-1)\overline{E}+(\lambda(n+2a+\widetilde{m})-2)F)$.
This log pair is not log canonical at the point $O\in F$ by construction.
Moreover, we have
$$
m+\widetilde{m}=n+2a+\widetilde{n}+a\leqslant 2n+3a\leqslant 3+3a\leqslant 4<\frac{3}{\lambda},
$$
since $\lambda<\frac{3}{4}$.
Thus, it follows from Corollary~\ref{corollary:log-pull-back-2} that
the log pair \eqref{equation:log-pull-back-2} is log canonical at every point of the curve $F$
that is different from the point $O$.

Let us show that $O\ne F\cap\overline{E}$. Suppose that $O=F\cap\overline{E}$.
If $O\not\in\overline{C}_1$, then Theorem~\ref{theorem:adjunction} applied to the log pair \eqref{equation:log-pull-back-2}
and the curve $\overline{E}$ gives
\begin{multline*}
\lambda\big(3a+2n)-2\geqslant\lambda\big(2a+2n+\widetilde{m}-\widetilde{n})-2=\lambda\big(n-\widetilde{n}\big)+\lambda(n+2a+\widetilde{m})-2=\\
=\lambda\overline{\Omega}\cdot\overline{E}+\lambda(n+2a+\widetilde{m})-2=\Big(\lambda\overline{\Omega}+\big(\lambda(n+2a+\widetilde{m})-2\big)F\Big)\cdot\overline{E}>1
\end{multline*}
which implies that $3a+2n>\frac{3}{\lambda}$.
This is impossible, because $a\leqslant\frac{1}{3}$, $n\leqslant\frac{3}{2}$ and $\lambda\leqslant\frac{3}{4}$.
Thus, we see that $O\in\overline{C}_1$.
In particular, $Q\in\widetilde{C}_1$, $\widetilde{m}=\widetilde{n}+a$, and $C_1$ has a cuspidal singularity at the point $P$.
Now we apply Theorem~\ref{theorem:adjunction} to the log pair \eqref{equation:log-pull-back-2} and the curve $\overline{C}_1$ at the point $O$.
This gives
\begin{multline*}
\lambda\big(3+5a)-3=\lambda\big(\Omega\cdot C_1+5a\big)-3=\lambda\big(\widetilde{\Omega}\cdot\widetilde{C}_1-\widetilde{n})+\lambda(2n+5a+\widetilde{n})-3=\\
=\Big(\lambda\overline{\Omega}+\big(\lambda(n+2a)-1\big)\overline{E}+\big(\lambda(n+3a+\widetilde{n})-2\big)F\Big)\cdot\overline{C}_1>1
\end{multline*}
which implies that $5a>\frac{4}{\lambda}-3$.
Since $\lambda\leqslant\frac{3}{4}$, we have
$a>\frac{1}{5}(\frac{4}{\lambda}-3)>\frac{7}{15}$,
which is impossible, because we already proved that $a\leqslant\frac{1}{3}$.
Thus, we see that $O\ne F\cap\overline{E}$.

We already know that $m<\frac{2}{\lambda}$ and $m+\widetilde{m}<\frac{3}{\lambda}$.
Thus, if $Q\not\in\widetilde{C}_1$, then we can apply Lemma~\ref{lemma:point-not-on-exceptional} to obtain $O=F\cap\overline{E}$,
which is not  the case.
Hence, we conclude that $Q\in\widetilde{C}_1$, so that $\widetilde{m}=\widetilde{n}+a$.
If $O\not\in\overline{C}_1$, then the log pair
$(\overline{S}_4, \lambda\overline{\Omega}+(\lambda(n+2a+\widetilde{m})-2)F)$
is not  log canonical at the point $O$ as well, which implies that
$\widetilde{n}=\overline{\Omega}\cdot F>\frac{1}{\lambda}>\frac{4}{3}$ by Theorem~\ref{theorem:adjunction}.
On the other hand, we have
$$
3=\Omega\cdot C_1-2n=\widetilde{\Omega}\cdot\widetilde{C}_1\geqslant\widetilde{n},
$$
which implies that $3\widetilde{n}\leqslant 2n+\widetilde{n}\leqslant 3$, so that $\widetilde{n}\leqslant 1$.
This shows that $O\in\overline{C}_1$.

Since $O\ne F\cap\overline{E}$ and $O\in\overline{C}_1$, we conclude that $P$ is an ordinary double point of the curve $C_1$.
Hence, the curves $\widetilde{C}_1$ and $E$ intersect transversally at the point $Q$.
Thus, applying Theorem~\ref{theorem:adjunction} to the log pair \eqref{equation:log-pull-back} and the curve $E$, we get
$\lambda n=\lambda\widetilde{\Omega}\cdot E>1-\lambda a$,
which implies $a+n>\frac{1}{\lambda}>\frac{4}{3}$.
Similarly, applying Theorem~\ref{theorem:adjunction} to the log pair \eqref{equation:log-pull-back} and the curve $\widetilde{C}_1$, we get
$$
\lambda\big(3-2n)=\lambda\widetilde{\Omega}\cdot\widetilde{C}_1>1-\big(\lambda(2a+n)-1\big)=2-\lambda(2a+n),
$$
which implies that $2a>n+\frac{2}{\lambda}-3>n-\frac{1}{3}$.
Thus, we have
$2a>n-\frac{1}{3}>(\frac{4}{3}-a)-\frac{1}{3}=1-a$,
which implies that $a>\frac{1}{3}$. But we already proved that $a\leqslant\frac{1}{3}$.
This is a contradiction.
\end{proof}

\begin{lemma}
\label{lemma:C6}
The case \eqref{2conics} is impossible.
\end{lemma}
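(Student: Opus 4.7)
The plan is to follow the pattern of Lemmas~\ref{lemma:C3}--\ref{lemma:C5}: use Lemma~\ref{lemma:assumption-support} to relabel so that $C_\star = C_2$, write $D = aC_1 + \Omega$ with $a \geq 0$ and the support of $\Omega$ containing neither $C_1$ nor $C_2$, and then combine intersection-theoretic bounds with Theorems~\ref{theorem:adjunction} and \ref{theorem:Trento} to rule out every possible location of $Q$.

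Since $S_4$ is a K3 surface, adjunction gives $C_i^2 = -2$ for each smooth conic, and $(C_1 + C_2)^2 = H^2 = 4$ then yields $C_1 \cdot C_2 = 4$. Setting $n = \mult_P(\Omega)$ and using that both conics pass through $P$ and are not in $\Supp(\Omega)$, the identities
$$\Omega \cdot C_2 = H \cdot C_2 - a\, C_1 \cdot C_2 = 2 - 4a, \qquad \Omega \cdot C_1 = H \cdot C_1 - a\, C_1^2 = 2 + 2a$$
together with $n \leqslant \Omega \cdot C_i$ produce $n + 4a \leqslant 2$ and $n \leqslant 2 + 2a$. In particular $m = n + a \leqslant 2$, so Lemma~\ref{lemma:O-E-F} gives $m \leqslant 2/\lambda$, $m + \widetilde{m} \leqslant 3/\lambda$, and $O \neq \overline{E} \cap F$. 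The contrapositive of Lemma~\ref{lemma:point-not-on-exceptional} then forces $Q \in \widetilde{T}_P = \widetilde{C}_1 \cup \widetilde{C}_2$.

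The argument then splits into two cases. If $Q \in \widetilde{C}_2$, then $\widetilde{C}_2 \cdot \widetilde{\Omega} = C_2 \cdot \Omega - n = 2 - 4a - n$ gives $\widetilde{n} \leqslant 2 - 4a - n$, and since $\widetilde{m}$ is either $\widetilde{n}$ or $\widetilde{n} + a$ (according to whether the two conics meet transversally or tangentially at $P$), one obtains $m + \widetilde{m} \leqslant 2 - 2a$, contradicting $m + \widetilde{m} > 8/3$ from \eqref{equation:m-m-big}. If instead $Q \in \widetilde{C}_1 \setminus \widetilde{C}_2$, combine $\widetilde{n} \leqslant n \leqslant 2 - 4a$ with $\widetilde{n} \leqslant \widetilde{C}_1 \cdot \widetilde{\Omega} = 2 + 2a - n$ to show $\widetilde{n} \leqslant 6/5 < 1/\lambda$; then Theorem~\ref{theorem:Trento} applied to $(\widetilde{S}_4, \lambda a \widetilde{C}_1 + \lambda \widetilde{\Omega} + (\lambda m - 1)E)$ with the transversal smooth curves $\widetilde{C}_1$ and $E$ at $Q$ yields either $4a + n > 10/3$ or $n + 2a > 8/3$, each of which contradicts $n + 4a \leqslant 2$.

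The main obstacle will be to secure the bound $\widetilde{n} \leqslant 1/\lambda$ in the second case, which is needed to apply Theorem~\ref{theorem:Trento}. This will require a brief optimization splitting on the value of $a$: for $a \leqslant 1/5$ the constraint $\widetilde{n} \leqslant 2 + 2a - n$ binds at $n = 1 + a$, giving $\widetilde{n} \leqslant 1 + a \leqslant 6/5$; for $a \geqslant 1/5$ the constraint $\widetilde{n} \leqslant n \leqslant 2 - 4a$ gives $\widetilde{n} \leqslant 6/5$. Since $\lambda < 3/4$ by \eqref{equation:lambda-3-4}, the worst case $\widetilde{n} \leqslant 6/5 < 4/3 < 1/\lambda$ still satisfies the hypothesis of Theorem~\ref{theorem:Trento}.
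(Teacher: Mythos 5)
Your proposal is correct and follows essentially the same strategy as the paper's proof: after relabelling so that one conic is $C_\star$, decompose $D = aC + \Omega$, use the intersection numbers $C_1\cdot C_2 = 4$ and $C_i^2 = -2$ (which the paper uses implicitly) to get $n + 4a \leqslant 2$ and $\widetilde{n} \leqslant 6/5$, deduce $Q \in \widetilde{C}_1\cup\widetilde{C}_2$ via Lemmas~\ref{lemma:O-E-F} and \ref{lemma:point-not-on-exceptional}, eliminate $Q$ on the proper transform of $C_\star$ directly, and finish on the other conic with Theorem~\ref{theorem:Trento}. The only differences are cosmetic: the paper's elimination of $Q\in\widetilde{C}_\star$ is slightly slicker ($2 - m = \widetilde{D}\cdot\widetilde{C}_\star \geqslant \widetilde{m}$, avoiding the transversal/tangential distinction), and your derivation of $\widetilde{n}\leqslant 6/5$ uses a case split on $a$ rather than a single linear combination, but both are sound.
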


\begin{proof}
Suppose that we are in the case \eqref{2conics}.
Then $\mult_P(T_P)=2$ and $T_P$ consists of two conic curves and they intersect at $P$.
Thus, we have the following picture:
\begin{center}
\begin{tikzpicture}[scale=1.3]
\draw (0.57,1.73) to [out=0, in=90] (2,1) to [out=-90, in=0](0.57,0.2) to [out=180, in=-90] (-0.86,1) to [out=90, in=180] (0.57,1.73);
\draw (1.57,1.73) to [out=0, in=90] (3,1) to [out=-90, in=0](1.57,0.2) to [out=180, in=-90] (0.14,1) to [out=90, in=180] (1.57,1.73);
\node at (1,1.32) {$P$};
\node at (-1.2,0.5) {$C_1$};
\node at (3.4,0.5) {$C_2$};
\draw [fill] (1.02,1.73) circle [radius=0.06];
\end{tikzpicture}
\end{center}

Without loss of generality, we may assume that $C_1=C_\star$. This gives  $2=C_1\cdot D\geqslant m$.
Then $m\leqslant\frac{2}{\lambda}$ and $m+\widetilde{m}\leqslant\frac{3}{\lambda}$ by Lemma~\ref{lemma:O-E-F}.
Hence, Corollary~\ref{corollary:log-pull-back} implies that the log pair \eqref{equation:log-pull-back} is log canonical at every point of the curve $E$ that is different from $Q$.
Moreover, Corollary~\ref{corollary:log-pull-back-2} implies that the log pair \eqref{equation:log-pull-back-2} is log canonical at every point of the curve $F$ that is different from $O$.
Furthermore, Lemma~\ref{lemma:O-E-F} implies that $O\ne\overline{E}\cap F$.

Denote by $\widetilde{C}_1$ and $\widetilde{C}_2$ the proper transforms on the surface $\widetilde{S}_4$ of the conics $C_1$ and $C_2$, respectively.
By Lemma~\ref{lemma:point-not-on-exceptional}, we see that $Q\in\widetilde{C}_1\cup\widetilde{C_2}$.
If $Q\in\widetilde{C}_1$, then
$$
2-m=\widetilde{D}\cdot\widetilde{C}_1\geqslant\widetilde{m}
$$
which implies that $m+\widetilde{m}\leqslant 2$. On the other hand, we have
$m+\widetilde{m}>\frac{2}{\lambda}>\frac{8}{3}$ by \eqref{equation:m-m-big}.
Hence, we see that $Q\not\in\widetilde{C}_1$ and $Q\in\widetilde{C}_2$.

Write $D=aC_2+\Omega$, where $a$ is a non-negative rational number,
and $\Omega$ is an effective $\mathbb{Q}$-divisor whose support does not contain the conic $C_2$.
Put $n=\mult_{P}(\Omega)$. Then
$$
2-4a=\big(H-aC_2\big)\cdot C_1=\Omega\cdot C_2\geqslant n.
$$
This gives $n+4a\leqslant 2$. In particular, $a\leqslant\frac{1}{2}$.

Denote by $\widetilde{\Omega}$ the proper transform of the $\mathbb{Q}$-divisor $\Omega$ on the surface $\widetilde{S}_4$,
and put $\widetilde{n}=\mult_Q(\widetilde{\Omega})$.
Then $n\geqslant\widetilde{n}$ and
$$
2+2a-n=\big(H-aC_2\big)\cdot C_2-n=\Omega\cdot C_2-n=\widetilde{\Omega}\cdot\widetilde{C}_2\geqslant\widetilde{n}.
$$
Hence, we have $n+\widetilde{n}\leqslant 2+2a$.
Using this inequality together with~$n+4a\leqslant 2$, we see that
$$
\widetilde{n}\leqslant 2+2a-n\leqslant 2+\frac{1}{2}\big(2-n\big)-n,
$$
which implies that $\frac{3}{2}n+\widetilde{n}\leqslant 3$.
This together with the fact that $\widetilde{n}\leqslant n$ shows that $\widetilde{n}\leqslant\frac{6}{5}$.

Rewrite the log pair \eqref{equation:log-pull-back} as
$(\widetilde{S}_4, \lambda a\widetilde{C}_2+(\lambda n+\lambda a-1)E+\lambda\widetilde{\Omega})$.
Since $\widetilde{n}\leqslant\frac{6}{5}$, we see that $\lambda\widetilde{n}<1$.
Hence, we can apply Theorem~\ref{theorem:Trento} to the pair \eqref{equation:log-pull-back} at the point $Q$.
This gives us that either
$$
\lambda (2+2a-n)=\lambda\big(\Omega\cdot C_2-n\big)=\lambda\widetilde{\Omega}\cdot\widetilde{C}_2>2\big(1-(\lambda n+\lambda a-1)\big)
$$
or $\lambda n=\lambda\widetilde{\Omega}\cdot E>2(1-\lambda a)$ (or both). In the first case, we have
$$
4a+n>\frac{4}{\lambda}-2>\frac{16}{3}-2=\frac{8}{3},
$$
because $\lambda<\frac{3}{4}$. In the second case, we get $n+2a>\frac{2}{\lambda}>\frac{8}{3}$.
On the other hand, we already proved that $4a+n\leqslant 2$.
This gives us the desired contradiction.
\end{proof}

\begin{lemma}
\label{lemma:C7}
The case \eqref{quartic-m2} is impossible.
\end{lemma}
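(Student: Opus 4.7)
The key feature of case~\eqref{quartic-m2} that makes it much simpler than the preceding cases is that the curve $T_P$ is irreducible. My plan is to exploit this irreducibility to pin down $C_\star=T_P$ and then bound both $m$ and $\widetilde{m}$ by intersecting $D$ with $T_P$ on $S_4$ and on $\widetilde{S}_4$.

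First, I would argue that $S_4$ contains no line through $P$. Indeed, by Lemma~\ref{lemma:curve-C} any such line would be a component of $T_P$; but $T_P$ is irreducible of degree $4$, so this is impossible. Combining this with Lemma~\ref{lemma:assumption-support}, the curve $C_\star$ must be $T_P$ itself, so $T_P$ is not contained in the support of $D$. Therefore
$$
2m=\mathrm{mult}_P(D)\,\mathrm{mult}_P(T_P)\leqslant D\cdot T_P=H\cdot T_P=4,
$$
which gives $m\leqslant 2$. Now Lemma~\ref{lemma:O-E-F} applies and yields $m\leqslant\frac{2}{\lambda}$, $m+\widetilde{m}\leqslant\frac{3}{\lambda}$, together with $O\ne\overline{E}\cap F$.

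Next, I would split on whether $Q$ lies on the proper transform $\widetilde{T}_P$. If $Q\notin\widetilde{T}_P$, then all hypotheses of Lemma~\ref{lemma:point-not-on-exceptional} are met, and the lemma forces $O=\overline{E}\cap F$, contradicting Lemma~\ref{lemma:O-E-F}. So $Q\in\widetilde{T}_P$. Since $\widetilde{T}_P$ is not a component of $\widetilde{D}$, and since $\widetilde{D}\cdot\widetilde{T}_P=D\cdot T_P-m\cdot(E\cdot\widetilde{T}_P)=4-2m$, the usual local intersection inequality gives
$$
\widetilde{m}\leqslant\mathrm{mult}_Q(\widetilde{T}_P)\cdot\widetilde{m}\leqslant\widetilde{D}\cdot\widetilde{T}_P=4-2m.
$$
Combined with the lower bound $m+\widetilde{m}>\frac{2}{\lambda}>\frac{8}{3}$ coming from \eqref{equation:m-m-big}, this yields $m+(4-2m)>\frac{8}{3}$, i.e.\ $m<\frac{4}{3}$, which contradicts \eqref{equation:m-big}.

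The argument does not require us to distinguish between the various kinds of double points of $T_P$ (node, cusp, tacnode, \dots), because the intersection estimate above only uses $T_P\cdot E=\mathrm{mult}_P(T_P)=2$ and the inequality $\mathrm{mult}_Q(\widetilde{T}_P)\geqslant 1$. I do not anticipate any genuine obstacle: once the irreducibility of $T_P$ pushes $m$ down to at most $2$, the two multiplicity bounds \eqref{equation:m-big} and \eqref{equation:m-m-big} clash with the single intersection number $D\cdot T_P=4$, and this closes the last remaining case.
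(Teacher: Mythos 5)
Your proposal is correct and follows essentially the same route as the paper: use the irreducibility of $T_P$ to force $C_\star=T_P$, deduce $m\leqslant 2$ from $D\cdot T_P=4\geqslant 2m$, invoke Lemmas~\ref{lemma:point-not-on-exceptional} and~\ref{lemma:O-E-F} to conclude $Q\in\widetilde{T}_P$, and then combine $\widetilde{m}\leqslant\widetilde{D}\cdot\widetilde{T}_P=4-2m$ with \eqref{equation:m-m-big} to get $m<\frac{4}{3}$, contradicting \eqref{equation:m-big}. The remark about $S_4$ containing no line through $P$ is an unnecessary (though harmless) detour, since irreducibility of $T_P$ alone immediately gives $C_\star=T_P$ via Lemma~\ref{lemma:assumption-support}.
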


\begin{proof}
Suppose that we are in the case \eqref{quartic-m2}.
Then $\mult_P(T_P)=2$ and $T_P$ is an irreducible quartic curve with a singular point $P$ of multiplicity~$2$
We have the following picture:
\begin{center}
\begin{tikzpicture}[scale=1.3]
\draw  (0.5,2) to [out=-100,in=120] (0.6,1.3) to [out=-60,in=-90]   (3.2,1.3) to [out=90,in=90] (2,1.3)  to [out=-90,in=100] (3,0)
to [out=-90, in=270] (4.5,0);
\node at (2.4,0.26) {$P$};
\node at (4.8,0.1) {$T_P$};
\draw [fill] (2.5,0.63) circle [radius=0.06];
\end{tikzpicture}
\end{center}

Since $T_P$ is irreducible, we have $C_\star=C$.
This gives $4=D\cdot C\geqslant 2m$, which implies that $m\leqslant 2$.
Thus, $Q\in\widetilde{T}_P$ by Lemmas~\ref{lemma:point-not-on-exceptional} and \ref{lemma:O-E-F}.
Therefore, we have
$$
4-2m=\widetilde{D}\cdot\widetilde{C}\geqslant\widetilde{m}
$$
which implies that $2m+\widetilde{m}\leqslant 4$. Using \eqref{equation:m-m-big}, we get
$4-m\geqslant m+\widetilde{m}>\frac{2}{\lambda}>\frac{8}{3}$,
which implies that $m\leqslant\frac{4}{3}$. But $m>\frac{4}{3}$ by \eqref{equation:m-big}.
\end{proof}

By Corollary~\ref{corollary:A} and Lemmas~\ref{lemma:B1}, \ref{lemma:B2}, \ref{conic+2lines}, \ref{cubic+line},
\ref{lemma:C1-C2}, \ref{lemma:C3}, \ref{lemma:C4}, \ref{lemma:C5}, \ref{lemma:C6}, and \ref{lemma:C7}, we obtain the desired contradiction.
This completes the proof of Theorem~\ref{theorem:quartic}.

\section{General surfaces of large degree}
\label{section:general}

In this section, we prove Theorem~\ref{theorem:general-surface}.
By Lemmas~\ref{lemma:3-4} and \ref{lemma:3-4-general}, it follows from

\begin{lemma}
\label{lemma:general-surface}
Let $S_d$ be a smooth surface in $\PP^3$ of degree $d$, and let $H$ be its hyperplane section.
Then  $\alpha(S_d,H)\leqslant\frac{2}{\sqrt{d}}$.
\end{lemma}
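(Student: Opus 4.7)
The plan is to exhibit, for each large $k$, an effective $\mathbb{Q}$-divisor $D_k \sim_{\mathbb{Q}} H$ on $S_d$ whose multiplicity at some fixed point $P \in S_d$ is asymptotically close to $\sqrt{d}$, and then apply the upper bound $\mathrm{lct}_P(S_d,D) \leq 2/\mathrm{mult}_P(D)$ from \eqref{equation:lct} to conclude $\alpha(S_d,H) \leq 2/\sqrt{d}$.

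First, fix any point $P \in S_d$. For each integer $k \geq 1$, let $V_k$ denote the image of the restriction map
$$
H^0\bigl(\mathbb{P}^3,\mathcal{O}_{\mathbb{P}^3}(k)\bigr) \longrightarrow H^0\bigl(S_d,\mathcal{O}_{S_d}(kH)\bigr).
$$
Since the kernel of this map is $H^0(\mathbb{P}^3,\mathcal{I}_{S_d}(k)) = H^0(\mathbb{P}^3,\mathcal{O}_{\mathbb{P}^3}(k-d))$, one computes
$$
\dim V_k = \binom{k+3}{3}-\binom{k-d+3}{3} = \frac{dk^2}{2}+O(k).
$$
Because $S_d$ is smooth at $P$, the condition that a section of $\mathcal{O}_{S_d}(kH)$ vanish to order at least $m$ at $P$ cuts out a linear subspace of codimension at most $\dim(\mathcal{O}_{S_d,P}/\mathfrak{m}_P^m) = \binom{m+1}{2}$. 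Intersecting with $V_k$, the subspace $V_k^{(m)} \subset V_k$ of sections vanishing to order at least $m$ at $P$ satisfies $\dim V_k^{(m)} \geq \dim V_k - \binom{m+1}{2}$, so $V_k^{(m)}\ne 0$ whenever $\binom{m+1}{2}<\dim V_k$.

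Next, I would choose $m_k$ to be the largest integer satisfying this inequality, and pick any nonzero $\sigma_k \in V_k^{(m_k)}$. Its zero divisor $C_k = \{\sigma_k=0\}$ is an effective divisor on $S_d$ with $C_k \sim kH$ and $\mathrm{mult}_P(C_k) \geq m_k$. Setting $D_k = \tfrac{1}{k}C_k$ yields $D_k \sim_{\mathbb{Q}} H$ with $\mathrm{mult}_P(D_k) \geq m_k/k$, and \eqref{equation:lct} gives
$$
\alpha(S_d,H) \leq \mathrm{lct}_P(S_d,D_k) \leq \frac{2}{\mathrm{mult}_P(D_k)} \leq \frac{2k}{m_k}.
$$

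It remains to estimate $m_k$. Comparing the leading quadratic terms of $\binom{m_k+1}{2}$ and $\dim V_k$ gives $m_k = k\sqrt{d}+O(1)$; more precisely, solving the quadratic inequality shows $m_k \geq k\sqrt{d}-C$ for a constant $C$ independent of $k$. Hence $2k/m_k \to 2/\sqrt{d}$ as $k \to \infty$, and taking the infimum over $k$ in the displayed bound yields $\alpha(S_d,H) \leq 2/\sqrt{d}$ as required. The only delicate point of the argument is this asymptotic estimate for $m_k$, but that reduces to a direct analysis of two explicit polynomials in $k$; the geometric content lies almost entirely in the dimension count for $V_k$ and the multiplicity bound \eqref{equation:lct}.
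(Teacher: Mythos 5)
Your proof is correct and rests on the same underlying idea as the paper's: fix a point $P$, exhibit effective $\Q$-divisors $D\sim_\Q H$ with $\mult_P(D)$ approaching $\sqrt{d}$, and invoke the upper bound $\lct_P\leqslant 2/\mult_P$ from \eqref{equation:lct}. The only difference is the bookkeeping used to prove that divisors of high multiplicity exist: you count conditions directly in $H^0(S_d,\mathcal{O}_{S_d}(kH))$ (noting that the restriction map from $\PP^3$ is surjective, as $H^1(\PP^3,\mathcal{O}(k-d))=0$, so $\dim V_k=\binom{k+3}{3}-\binom{k-d+3}{3}\sim \tfrac{d}{2}k^2$ while vanishing to order $m$ imposes at most $\binom{m+1}{2}$ linear conditions), whereas the paper blows up $P$, observes $(f^*(nH)-nmE)^2=n^2(d-m^2)>0$, and uses Riemann--Roch together with Serre duality to show $|f^*(nH)-nmE|\ne\varnothing$ for $n\gg 0$. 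These are two phrasings of the same dimension count: your version is the more elementary of the two, dispensing with intersection theory and Riemann--Roch at the cost of a slightly longer explicit estimate for $m_k$, while the paper's version is more compact once one accepts the standard asymptotics of $h^0$ on a surface. Either way the conclusion $\alpha(S_d,H)\leqslant 2/\sqrt{d}$ follows.
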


\begin{proof}
Let $P$ be a point in $S_d$, and let $f\colon\widetilde{S}_d\rightarrow S_d$ be the blow up of the surface $S_d$ at the point $P$.
Denote by $E$ the $f$-exceptional curve.
Fix \emph{any} positive rational number $m$ such that $m<\sqrt{d}$,
and take a positive integer $n$ such that $mn$ is an integer.
Then
$$
\big(f^*(nH)-nmE\big)^2=n^2\big(d-m^2\big)>0.
$$
This implies that the linear system $|f^*(nH)-nmE|$ is not empty for $n\gg 0$.
Indeed, we have
$$
h^2\Big(\widetilde{S}_4,\mathcal{O}_{\widetilde{S}_d}\big(f^*(nH)-nmE\big)\Big)=h^0\Big(\widetilde{S}_4,\mathcal{O}_{\widetilde{S}_d}\big(f^*((d-4-n)H)+(mn+1)E\big)\Big)=0
$$
for $n>d-4$ by Serre duality. Thus, if  $n$ is sufficiently big comparing to $d$, then
\begin{multline*}
h^0\Big(\widetilde{S}_d,\mathcal{O}_{S_d}\big(f^*(nH)-nmE\big)\Big)\geqslant\\
\geqslant\chi\big(\mathcal{O}_{\widetilde{S}_d}\big)+\frac{1}{2}\Big(\big(f^*(nH)-nmE\big)^2-\big(f^*(nH)-nmE\big)\cdot K_{\widetilde{S}_4}\Big)=\\
=\chi\big(\mathcal{O}_{\widetilde{S}_d}\big)+\frac{1}{2}\Big(n^2\big(d-m^2\big)-n(d-4)-nm\Big)>0
\end{multline*}
by the Riemann--Roch formula for surfaces.

Let us fix a positive integer $n$ such that $mn$ is an integer and $|f^*(nH)-nmE|$ is not empty.
Pick a divisor $\widetilde{M}$ in this linear system, so that $\widetilde{M}\sim n\widetilde{H}-nmE$.
Denote by $M$ the proper transform of the divisor $\widetilde{M}$ on the surface $S_d$.
Put $D=\frac{1}{n}M$.
Then $\mult_P(D)\geqslant m$, so that $\mathrm{lct}_P(S_d,D)\leqslant\frac{2}{m}$ by \eqref{equation:lct}.
This gives $\alpha(S_d, H)\leqslant\frac{2}{m}$, because $D\sim_{\mathbb{Q}} H$.
Since we can choose rational number $m<\sqrt{d}$ as close to $\sqrt{d}$ as we wish, we obtain $\alpha(S_d,H)\leqslant\frac{2}{\sqrt{d}}$.
\end{proof}

The idea of the proof of this lemma comes from \cite[Example~1.26]{Ch14}.

\begin{proof}[\bf Proof of Theorem~\ref{theorem:general-surface}] It follows from Lemma~\ref{lemma:3-4} and Lemma~\ref{lemma:3-4-general} that $\alpha_1(S_d,H)=\frac{3}{4}$ for a general surface $S_d$ in $\PP^3$. The claim follows from this fact together with Lemma~\ref{lemma:general-surface}.
\end{proof}

\section{Quintic, sextic and septic}
\label{section:small-degree}

Let $S_d$ be a surface in $\mathbb{P}^3$ that is given by
$$
\big(x^{d-2}+y^{d-2}+z^{d-2}+w^{d-2}\big)\big(xw+yz\big)+\big(y-z\big)^d-x^d=0,
$$
where $d\geqslant 2$. One can easily see that the surface $S_d$ is smooth.
Denote by $H$ its hyperplane section.
Arguing as in \cite[Example~3.9]{ChPaWo14}, we obtain

\begin{lemma}
\label{lemma:alpha-1-2}
Suppose that $d\leqslant 7$. Then $\alpha_1(X_d, H)>\frac{1}{2}$.
\end{lemma}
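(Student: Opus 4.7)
The plan is to interpret the claim $\alpha_1(S_d,H)>\tfrac12$ as a uniform lower bound on the log canonical threshold of hyperplane sections. For any $T\in|H|$, Lemma~\ref{lemma:Pukhlikov} guarantees that $T$ is a reduced plane curve of degree $d$ on $S_d$, so the question is whether $\mathrm{lct}_P(S_d,T)>\tfrac12$ holds at every point $P\in T$. Since $|H|$ is compact and log canonical thresholds on a fixed surface satisfy the ACC, it suffices to verify this strict inequality pointwise.

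A hyperplane section of $S_d$ is smooth at $P$ unless the defining hyperplane coincides with the tangent plane $T_PS_d$, in which case the section acquires a singular point of multiplicity $m\geqslant 2$ at $P$. Writing local affine coordinates in which $T_PS_d=\{w=0\}$, the surface $S_d$ is given by $w=\psi(x,y)$ for a power series $\psi$ of order $\geqslant 2$, and the tangent hyperplane section is $\{\psi(x,y)=0\}$. From \eqref{equation:lct} we have $\mathrm{lct}_P(S_d,T)\leqslant 2/m$, so the first requirement is $m\leqslant 3$ at every $P\in S_d$. For $m=2$, a reduced plane curve singularity is of $A_n$-type and satisfies $\mathrm{lct}_P=\tfrac12+\tfrac1n>\tfrac12$. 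For $m=3$, the worst unibranch triple point is $\{y^3=x^n\}$, with $\mathrm{lct}_P=\tfrac13+\tfrac1n$, exceeding $\tfrac12$ precisely when $n\leqslant 5$; other (reducible or less degenerate) triple points only give larger thresholds.

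Consequently, the argument reduces to two explicit computations using the polynomial
$$
F=\bigl(x^{d-2}+y^{d-2}+z^{d-2}+w^{d-2}\bigr)(xw+yz)+(y-z)^d-x^d:
$$
(i) show that for every $P\in S_d$ the restriction of $F$ to the tangent plane at $P$ has local vanishing order at most $3$; and (ii) at points where this order equals $3$, verify that the Puiseux exponent of the resulting plane-curve germ is at most $5$, so $\mathrm{lct}_P>\tfrac12$. Claim (i) is the statement that the quadratic and cubic parts of the local Taylor expansion of $F$ (after eliminating the tangent direction) cannot vanish simultaneously at any point of $S_d$; the asymmetric summands $(y-z)^d-x^d$ together with the mixed factor $xw+yz$ have been inserted precisely to obstruct such simultaneous vanishing, and one checks them by differentiating $F$ at a putative bad point $P=[a:b:c:e]$ and using $F(a,b,c,e)=0$.

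The main obstacle is the case analysis at the special points where the symmetric factor $x^{d-2}+y^{d-2}+z^{d-2}+w^{d-2}$ vanishes, because there $F$ degenerates to $(y-z)^d-x^d$ locally and extra tangencies may appear; one must show that these two low-order terms still generate enough variation to keep both the tangent-plane multiplicity and the Puiseux exponent inside the admissible range. This is the step that genuinely exploits $d\leqslant 7$: the exponent bound $n\leqslant 5$ from the triple-point analysis has to be matched against the orders of the surviving terms $(y-z)^d-x^d$ on the tangent plane, and the inequalities close precisely in the range $d\leqslant 7$, while for larger $d$ these terms sit in too high a degree to control the mid-order part of the expansion.
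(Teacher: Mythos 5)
Your proposal takes a genuinely different and substantially harder route than the paper, and it is not carried to completion. The paper's actual proof is shorter and cleaner in structure: it considers the parabolic curve $C=S_d\cap\mathrm{Hess}(S_d)$ and uses a Magma computation to verify that $C$ is \emph{smooth}. Smoothness of $C$ rules out \emph{all} tangent-plane sections of multiplicity $\geqslant 3$ at once. Consequently every singular point of a hyperplane section is a double point, hence of type $\mathbb{A}_n$ with $n$ bounded by the degree, and the log canonical threshold is $\tfrac12+\tfrac1m>\tfrac12$ for some finite $m$. Your plan, by contrast, admits the possibility of triple points and proposes to control their Puiseux exponents, i.e.\ to verify both a multiplicity bound $m\leqslant 3$ and a finer exponent bound at any triple points that do occur. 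This is covering cases that, by the paper's Hessian check, never arise.

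There are two concrete gaps. First, the proposal stops at ``one checks them by differentiating $F$ at a putative bad point'' and ``one must show that these two low-order terms still generate enough variation''; these are exactly the nontrivial verifications, and they are left undone. Any proof of this lemma must actually establish, for the specific polynomial $F$, that the degenerations you worry about do not happen --- the paper does this by a computer-algebra check, and you have replaced that check with a more delicate hand computation that is never executed. Second, the claim that ``other (reducible or less degenerate) triple points only give larger thresholds'' is not correct as stated: the reducible triple point $y^3=x^6$ (three smooth branches with pairwise tangency of order two) has $\lct=\tfrac12$, which is strictly smaller than the threshold $\tfrac{8}{15}$ of the unibranch $y^3=x^5$ allowed by your bound $n\leqslant 5$, so your reduction to the cuspidal model $y^3=x^n$ is not a priori safe. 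Finally, the explanation you offer for the role of the hypothesis $d\leqslant 7$ (a numerical match between Puiseux exponents and the degree of $(y-z)^d-x^d$) is speculative and does not match the paper, where the restriction to $d\leqslant 7$ simply reflects the range in which the Hessian-smoothness computation was carried out.
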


\begin{proof}
Let $C\subset\mathbb{P}^3$ be the curve defined by the intersection of the surface $S_d$ and the Hessian surface $\mathrm{Hess}(S_d)$ of $S_d$.
For the tangent hyperplane $T_P$ at a point $P\in S_d$, if the multiplicity of the curve $T_P\cap S_d$
at the point $P$ is at least $3$, then the curve $C$ is singular at the point $P$.
Using the computer algebra system \emph{Magma}, we checked that the curve $C$ is smooth.
Thus, the intersections of $S_d$ with its tangent planes do not have points of multiplicity $3$ or higher.
The later implies that $\alpha_1(S_d,H)>\frac{1}{2}$.
Indeed, each singular hyperplane section of $S_d$ is reduced by Lemma~\ref{lemma:Pukhlikov},
so that each its singular point is of type $\mathbb{A}_n$.
Then $\alpha_1(S_d,H)=\frac{1}{2}+\frac{1}{m}$,
where $m$ is the greatest integer such that a hyperplane section of $S_d$ has a singular point of type $\mathbb{A}_m$.
\end{proof}

On the other hand, we have

\begin{lemma}
\label{lemma:quintic-septic-sextic}
One has $\alpha_2(S_d,H)\leqslant\frac{3}{d}$.
\end{lemma}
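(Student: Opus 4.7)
The plan is to exhibit an explicit divisor $D\in|2H|$ and a point $P\in S_d$ where the log canonical threshold is at most $\tfrac{3}{2d}$; since $\alpha_2(S_d,H)\le 2\,\mathrm{lct}_P(S_d,D)$, this will give the bound.

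\smallskip

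\noindent\textbf{Choice of divisor.} Take $D = S_d\cdot Q \in |2H|$, where $Q$ is the quadric $xw+yz=0$. Since this quadric appears as a factor in the defining equation of $S_d$, the restriction of the equation to $Q$ simplifies to $(y-z)^d-x^d=0$, which factors as $\prod_{\zeta^d=1}(y-z-\zeta x)$. Hence $D=\sum_{\zeta^d=1}C_\zeta$, where each $C_\zeta$ is the conic cut out by $Q$ and the hyperplane $y-z-\zeta x=0$. A direct check shows that $P=(0:0:0:1)$ lies on every $C_\zeta$ and that all these conics share the tangent line $\{x=0,\,y=z\}$ at $P$, so $D$ has a very degenerate singularity at $P$.

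\smallskip

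\noindent\textbf{Local expansion.} In the affine chart $w=1$ the tangent plane to $S_d$ at $P$ is $\{x=0\}$, so $S_d$ is locally a graph $x=\phi(y,z)$. Solving implicitly from the defining equation
$$(x+yz)\bigl(1+x^{d-2}+y^{d-2}+z^{d-2}\bigr)=x^d-(y-z)^d$$
order by order, one finds (for $d\ge 4$) that $\phi_2=-yz$, that $\phi_k=0$ for $3\le k\le d-1$, that $\phi_d=-(y-z)^d$, that $\phi_k=0$ for $d+1\le k\le 2d-3$, and that $\phi_{2d-2}=(y-z)^d(y^{d-2}+z^{d-2})$. Thus the local equation of $D$ on $S_d$ is
$$f(y,z)=\phi(y,z)+yz=-(y-z)^d+h(y,z),\qquad \mathrm{ord}_P(h)\geqslant 2d-2.$$

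\smallskip

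\noindent\textbf{Weighted blow-up.} Introduce the local coordinates $u=y-z$, $v=z$ and consider the weighted blow-up $\pi\colon\widetilde S\to S_d$ at $P$ with weights $(2,1)$ on $(u,v)$. Its exceptional divisor $E$ has log discrepancy $a(E)=2+1-1=2$. I claim that the weighted order of $f$ along $E$ is exactly $2d$, with initial form $-u^d+(-1)^d v^{2d}$. To see this, write $f(u,v)=-u^d+\sum_{n\ge 0}H_n(v)u^n$ with $H_n(v)=\tfrac{1}{n!}\partial_y^n h(v,v)$. Restricting the implicit expansion to $y=z$ yields $\phi(v,v)+v^2=(-1)^d v^{2d}+O(v^{3d-2})$, so $H_0(v)=(-1)^d v^{2d}+O(v^{3d-2})$. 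For $1\le n\le d-1$, the leading piece $(y-z)^d(y^{d-2}+z^{d-2})$ of $h$ vanishes to order $d$ along $y=z$ and Leibniz' rule shows $\partial_y^n\phi_{2d-2}\big|_{y=z}=0$; since the next piece of $h$ has total degree at least $2d$, the function $H_n(v)$ has $v$-order strictly greater than $2d-2n$, i.e.\ $u^nv^{2d-2n}$ does not appear in $f$. Hence the only monomials of weighted order $2d$ are $-u^d$ and $(-1)^d v^{2d}$, and $\mathrm{ord}_E(D)=2d$.

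\smallskip

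\noindent\textbf{Conclusion.} The log canonical threshold of the pair $(S_d,D)$ at $P$ is bounded by the discrepancy formula
$$\mathrm{lct}_P(S_d,D)\leqslant \frac{a(E)+1}{\mathrm{ord}_E(D)}=\frac{3}{2d},$$
so $\alpha_2(S_d,H)\leqslant 2\,\mathrm{lct}_P(S_d,D)\leqslant\frac{3}{d}$. The main technical difficulty is the local computation in the previous paragraph: one must push the implicit expansion of $\phi$ far enough to rule out any cancellation producing monomials of weighted order less than $2d$ other than $u^d$ and $v^{2d}$; this is what forces the somewhat delicate identification of $\phi_{2d-2}$ and the vanishing of partial derivatives of $(y-z)^d(y^{d-2}+z^{d-2})$ along the diagonal.
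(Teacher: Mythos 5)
Your proposal is correct and uses the same divisor $M=S_d\cap\{xw+yz=0\}\in|2H|$ and the same point $P=[0:0:0:1]$ as the paper, arriving at the same bound $\mathrm{lct}_P(S_d,M)\leqslant\frac{3}{2d}$. The paper, however, gets the local equation of $M$ without the implicit-function expansion: since $M$ also lies on the smooth quadric $Q=\{xw+yz=0\}$, and the log canonical threshold of a reduced curve germ on a smooth surface is intrinsic to the germ, one may simply eliminate $x=-yz$ from the defining equation of $S_d$ to obtain the plane-curve equation $(y-z)^d-(-yz)^d=0$ for $M$ near $P$; in $(u,v)=(y-z,z)$ its weighted tangent cone for weights $(2,1)$ is $u^d-(-1)^dv^{2d}$, so your $(2,1)$-weighted blow-up (or the Newton-polygon formula) gives $\mathrm{lct}_P=\frac{3}{2d}$ at once. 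Your detour through the parametrization $x=\phi(y,z)$ of $S_d$ is valid but does more work: it forces you to track the correction $\phi_{2d-2}=(y-z)^d(y^{d-2}+z^{d-2})$ and verify that it only contributes higher weighted order, which you carry out correctly. (A small slip: the claimed remainder $O(v^{3d-2})$ in the expansion of $\phi(v,v)+v^2$ is only $O(v^{2d+1})$ in general, but this weaker estimate suffices for the argument.)
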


\begin{proof}
We may assume that $d\geqslant 3$. Put $P=[0:0:0:1]$.
Let $M$ be the divisor that is cut out on $S_d$ by the equation $xw+yz=0$.
Locally at $P$, the divisor $M$ is given by $(y-z)^d=(-yz)^d=0$,
which implies that $\mathrm{lct}_P(S_4,M)=\frac{3}{2d}$.
Since $M\sim 2H$, we obtain $\alpha_2(S_d,H)\leqslant\frac{3}{d}$.
\end{proof}

\begin{corollary}
\label{corollary:septic-sextic}
If $d>5$, then $\alpha(S_d,H)<\alpha_1(S_d,H)$.
\end{corollary}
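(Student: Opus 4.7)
The plan is to deduce Corollary~\ref{corollary:septic-sextic} as a direct consequence of Lemmas~\ref{lemma:alpha-1-2} and \ref{lemma:quintic-septic-sextic}, using the defining property $\alpha(X,L)=\inf_{n\ge 1}\alpha_n(X,L)$ recalled in the introduction. Applying this for $n=2$ gives
\[
\alpha(S_d,H)\le\alpha_2(S_d,H)\le\frac{3}{d},
\]
where the second inequality is Lemma~\ref{lemma:quintic-septic-sextic}. Thus, to obtain the strict inequality claimed in the corollary, it is enough to show that $\alpha_1(S_d,H)$ is strictly bigger than $3/d$.

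For the degrees $d\in\{6,7\}$ this is immediate: Lemma~\ref{lemma:alpha-1-2} asserts $\alpha_1(S_d,H)>\tfrac{1}{2}$, and $3/d\le\tfrac12$ for $d\ge 6$, so
\[
\alpha(S_d,H)\le\frac{3}{d}\le\frac{1}{2}<\alpha_1(S_d,H),
\]
which is exactly the desired inequality. In other words, for $d=6,7$ the corollary is just a one-line chaining of the two preceding lemmas.

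For $d\ge 8$ one needs to know that $\alpha_1(S_d,H)>3/d$ for this \emph{specific} surface (since Theorem~\ref{theorem:general-surface} only supplies such a bound for a \emph{general} surface of degree $d$). The natural route is to extend the Hessian-intersection computation used in Lemma~\ref{lemma:alpha-1-2} to arbitrary $d$: if one can verify that the curve $S_d\cap\mathrm{Hess}(S_d)$ is smooth, then the reasoning in the proof of Lemma~\ref{lemma:alpha-1-2} shows that every singular hyperplane section of $S_d$ has only $\mathbb{A}_n$ singularities, whence $\alpha_1(S_d,H)=\tfrac12+\tfrac1m$ for some integer $m$, which exceeds $3/d$ once $d$ is large.

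The genuinely easy cases are $d=6,7$, where the proof reduces to comparing $3/d$ with $1/2$; the main obstacle is rather the bookkeeping at higher $d$, since Lemma~\ref{lemma:alpha-1-2} only explicitly covers $d\le 7$, and so for $d\ge 8$ one must either rerun the (computer-assisted) Hessian check or argue by invoking Theorem~\ref{theorem:general-surface} under a genericity hypothesis on this explicit $S_d$.
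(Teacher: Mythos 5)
Your argument matches the paper's intended proof: chain the inequality $\alpha(S_d,H)\leq\alpha_2(S_d,H)\leq 3/d\leq 1/2$ from Lemma~\ref{lemma:quintic-septic-sextic} with $\alpha_1(S_d,H)>1/2$ from Lemma~\ref{lemma:alpha-1-2}, which covers $d\in\{6,7\}$. Your concern about $d\geq 8$ reflects only the loose wording of the corollary's hypothesis; the section's scope (see the title and the abstract) is $d\leq 7$, so the intended reading is $d\in\{6,7\}$ and the one-line chaining you give is the whole proof.
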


\begin{remark}\label{computer}We expect that  $\alpha(S_d,H)<\alpha_1(S_d,H)$ for $d=5$ as well.
By Lemma~\ref{lemma:alpha-1-2}, this claim follows from $\alpha_1(S_d,H)>\frac{3}{5}$.
To check the latter inequality one would have to find out if the intersections of $S_d$ with its tangent planes have a singularity of type $\mathbb{A}_9$ or worse.
This can be expressed as a system of polynomial equations in $4$ variables $x,y,z,w$:

Start with the equation of the quintic in variables $x,y,z,w$. Then intersect this with a symbolic plane $w=ax+by+cz$, by substitution. This gives a polynomial in $a,b,c,x,y,z$. Now we compute the discriminant of this equation with respect to $z$, which results in a huge polynomial in $a,b,c,x,y$. Let us denote this polynomial by $h$. If there is an $\mathbb{A}_9$ singularity, or worse, then the discriminant, as a polynomial in $x,y$ (when $a,b,c$ are treated as as parameters), should have a zero of multiplicity $10$ or higher. So the system of equations to consider consists of $h$ and all its derivatives of order up to $10$, as a system of polynomial equations in $a,b,c$, and $x$.

We used computer algebra to check whether or not this system has a solution,
but the computations did not finish after 1500 CPU seconds on a Pentium Pro with 2.7 GHz.
After reducing the system of equations modulo some small prime numbers (up to 293),
the program finished with the answer that the reduced system has no solution.
This can be interpreted as a strong evidence that $\alpha(S_d,H)<\alpha_1(S_d,H)$ for $d=5$.
\end{remark}

\end{document}